\documentclass[11pt]{amsart}

\usepackage{epsfig,amsmath,amsfonts,latexsym}
\usepackage[abs]{overpic}
\usepackage{color}
 \usepackage{palatino}
 \usepackage{marginnote}
 \usepackage{hyperref}
\usepackage[all,cmtip]{xy} 
\usepackage{tikz-cd}
\usepackage[mathscr]{euscript}
\usepackage{graphicx}

\textwidth 6.4in \textheight 8.2in \evensidemargin 0in
\oddsidemargin 0in 
\topmargin .3in

\newtheorem{claim}{Claim}
\newtheorem{question}{Question}
\newtheorem{conjecture}{Conjecture}
\newtheorem{theorem}{Theorem}
\newtheorem{proposition}[theorem]{Proposition}
\newtheorem{lemma}[theorem]{Lemma}

\theoremstyle{definition}
\newtheorem{definition}[theorem]{Definition}

\theoremstyle{remark}
\newtheorem{remark}[theorem]{Remark}

\newcommand{\Diff}{\mathrm{Diff}}

\newcommand{\Pin}{\mathrm{Pin}(2)}

\newcommand{\FBF}{\mathop{\mathrm{FBF}}\nolimits}

\newcommand{\id}{\mathrm{id}}

\newcommand{\s}{\mathfrak{s}}

\def\Spinc{\text{spin}^c}

\def\ker{\operatorname{Ker}}

\numberwithin{theorem}{section}
\theoremstyle{plain}
\title{Family Bauer--Furuta invariant, Exotic Surfaces and Smale conjecture}
\author{Jianfeng Lin, Anubhav Mukherjee}

\address{Yau Mathematical Sciences Center\\ Tsinghua University\\ Beijing\\ China.}

\email{linjian5477@gmail.com}

\address{School of Mathematics \\ Georgia Institute
of Technology \\  Atlanta  \\ Georgia\\ USA.}
\email{anubhavmaths@gatech.edu}

\begin{document}

\maketitle
\begin{abstract}
We establish the existence of a pair of exotic surfaces in a punctured $K3$ which remains exotic after one external stabilization and have diffeomorphic complements. A key ingredient in the proof is a vanishing theorem of the family Bauer--Furuta invariant for diffeomorphisms on a large family of spin 4-manifolds, which is proved using the tom Dieck splitting theorem in equivariant stable homotopy theory. In particular, we prove that the $S^{1}$-equivariant family Bauer--Furuta invariant of any orientation-preserving diffeomorphism on $S^{4}$ is trivial and that the $\Pin$-equivariant family Bauer--Furuta invariant for a diffeomorphism on $S^{2}\times S^{2}$ is trivial if the diffeomorphism acts trivially on the homology. Therefore, these invariants do not detect exotic self-diffeomorphisms on $S^{4}$ or $S^{2}\times S^{2}$. Furthermore, our theorem also applies to certain exotic loops of diffeomorphisms on $S^{4}$ (as recently discovered by Watanabe) and show that these loops have trivial family Bauer--Furuta invariants. En route, we observe a curious element in the $\Pin$-equivariant stable homotopy group of spheres which could potentially be used to detect an exotic diffeomorphism on $S^{4}$.     
\end{abstract}

\section{Introduction}
The Bauer--Furuta invariant of smooth 4-manifolds is a stable homotopy refinement of the classical Seiberg--Witten invariant \cite{BauerFuruta1}. It contains subtle but important information about the underlying 4-manifolds which is invisible from other invariants. In particular, the Bauer--Furuta invariant can detect new exotic smooth structures (for example, on $K3\#K3$) and its family version \cite{Szymik2010, Xu2004} is a powerful tool in detecting exotic diffeomorphisms \cite{KM2020,lin}. For this reason, this invariant has been actively studied in recent years \cite{baraglia2019note,baraglia2019bauer,bryan1997seiberg,iida2019bauer,konno2021involutions,KM2020,Schmidt2003spin,Szymik2008,Szymik2012}. While the classical Seiberg--Witten  invariant is an integer, the Bauer--Furuta invariant is an element in the (equivariant) stable homotopy group of spheres. This allows one to apply powerful techniques in equivariant topology (e.g. equivariant K-theory and equivariant homology) to extract information on the Bauer--Furuta invairiant and to study 4-manifolds. This  idea has resulted in many remarkable theorems including Furuta's $\frac{10}{8}$-theorem \cite{furuta} (see also \cite{furuta2007homotopy,hopkinslinshixu}) and Manolescu's disproof of the triangulation conjecture \cite{CM16} (see also \cite{lin2018morse}). In this paper, we follow this line of thought and use the equivariant stable homotopy theory to to prove a general vanishing result for the family Bauer--Furuta invariant of diffeomorphism on a large family of spin 4-manifolds (Theorem \ref{thm: generalized vanishing theorem}). This vanishing theorem has several applications, which we discuss first.

\subsection{Exotic surfaces} The first application concerns exotic surfaces in 4-manifolds. Recall that a pair of compact surfaces $\Sigma, \Sigma'$ in a 4-manifold $X$ with $\partial\Sigma=\partial\Sigma'\subset \partial X$ are called \textit{exotic surfaces} if there exists a topological isotopy of $X$ relative to the boundary that takes $\Sigma$ to $\Sigma'$ but there is no such smooth isotopy. Understanding exotic surfaces in 4-manifolds 
is of great interest since they are closely related to exotic smooth structures on 4-manifolds. In recent years, various methods have been used to detect examples of exotic surfaces (e.g., the Donaldson's invariants \cite{d-inv}, the Seiberg--Witten invariants \cite{fs97}, Heegaard-Floer homology \cite{JMZ}, symplectic \& contact geometry \cite{hayden} and Khovanov homology \cite{hayden2021khovanov}). In this paper, we add the family Bauer--Furuta invariant to this list. As we will see, this method reveals a new phenomenon 
: some exotic surfaces can survive a stabilization. 

To further explain this, we mention an important principle in 4-dimensional topology, discovered by Wall in the 1960s \cite{wall}. This principle says that exotic phenomena are all eliminated by sufficiently many stabilizations, i.e., taking connected sum with $S^{2}\times S^{2}$'s. Since then, it has been a fundamental problem to search for exotic phenomena that survives one stabilization. In the case of exotic surfaces, work of Perron \cite{PERRON1} and Quinn \cite{Quinn} showed that exotic surfaces always become smoothly isotopic after taking sufficiently many stabilizations (
away from the surfaces). 
\textit{It is a long standing open-question, whether a single stabilization is enough for this purpose?} (See \cite{Akbulut2014,AKHM} for studies of this problem.) Using the 4-dimensional light bulb theorem (proved by Gabai \cite{gabai} and  Scheniderman--Teichner \cite{Schneiderman2019HomotopyVI}),  Auckly--Kim--Melvin--Ruberman--Schwartz \cite{AKMRS} proves a surprising result in the positive direction: They showed that for exotic pairs of connected surfaces in non-characteristic homology classes with simply-connected complements, a single stabilization is enough. In this paper, we prove a result in the negative direction by producing the first pair of orientable exotic surfaces 
that remain non-isotopic after a single stabilization. 

\begin{theorem}\label{surface}
 There exists a link $L$ on the boundary $S^3$ of punctured $K3$ that bounds a pair of slice disks $D_L$ and $D'_L$ which are topologically isotopic relative to the boundary but not smoothly so. Moreover, they remain exotic after a single stabilization, i.e. connected sum with $S^2\times S^2$, and their complements are diffeomorphic. \end{theorem}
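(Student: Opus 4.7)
\medskip\noindent\textit{Proof plan.} The plan is to construct the exotic pair $(D_L, D_L')$ as the image of a single slice disk under an exotic self-diffeomorphism $\phi$ of the punctured $K3$ whose restriction to a disk complement is detected by the family Bauer--Furuta invariant, and then to verify that the detecting class survives one external stabilization.

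First, I would select a link $L \subset \partial(K3 \setminus \mathring{B}^4)$ bounding a slice disk $D_L$ in $X := K3 \setminus \mathring{B}^4$ arranged so that the disk exterior $W := X \setminus \nu(D_L)$ admits a diffeomorphism $\psi$ fixing $\partial W$ pointwise and detected, after capping $W$ off by a suitable piece into a closed spin 4-manifold, by a non-zero family Bauer--Furuta class. Such a pair $(D_L,\psi)$ can be engineered from known exotic diffeomorphisms of spin 4-manifolds (in the style of Baraglia--Konno or Kronheimer--Mrowka) by arranging the support of the diffeomorphism to lie away from the chosen slice disk. Extending $\psi$ by the identity across $\nu(D_L)$ produces a diffeomorphism $\phi \colon X \to X$ rel boundary, and I would set $D_L' := \phi(D_L)$. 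Tautologically the complements of $D_L$ and $D_L'$ are diffeomorphic via $\phi$, and topological ambient isotopy rel boundary follows from Freedman's classification combined with Perron--Quinn.

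To show smooth non-isotopy, argue by contradiction: a smooth ambient isotopy carrying $D_L$ to $D_L'$, together with uniqueness of tubular neighborhoods, would yield a diffeomorphism of $X$ smoothly isotopic to $\phi$ that preserves $D_L$ setwise and is the identity on $\nu(D_L)$. Restricting to $W$ would exhibit $\psi$ as smoothly isotopic to $\mathrm{id}_W$ rel boundary, contradicting the non-vanishing of its family Bauer--Furuta invariant. The analogous argument handles the once-stabilized case, provided the family Bauer--Furuta invariant of $\psi \,\#\, \mathrm{id}_{S^2 \times S^2}$ on $W \,\#\, (S^2 \times S^2)$ is still non-zero.

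The principal obstacle is precisely this last assertion. Wall's theorem forces the invariant to vanish after sufficiently many stabilizations, so one must pinpoint the arithmetic of a single stabilization inside the appropriate equivariant stable homotopy group. This is where the equivariant stable homotopy machinery developed in the paper (notably the tom Dieck splitting theorem appearing in Theorem~\ref{thm: generalized vanishing theorem}) is deployed in reverse: rather than forcing the class to vanish, one identifies a non-zero element in the relevant $\Pin$- or $S^1$-equivariant stable homotopy group of the Thom spectrum associated to $W$ that remains non-zero after smashing with the $J$-homomorphism class of one $S^2 \times S^2$-summand. Once this single non-vanishing computation is in place, the isotopy-obstruction argument above applies verbatim to both the unstabilized and once-stabilized settings, yielding the desired pair $D_L, D_L'$.
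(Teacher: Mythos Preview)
Your proposal has the right skeleton—produce $D_L'$ as the image of $D_L$ under an exotic rel-boundary diffeomorphism $\phi$ of $K3^{\circ}$, so that the complements are tautologically diffeomorphic—but the heart of the argument is structured in the wrong direction, and this is a genuine gap.

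In the paper, $\phi$ is the boundary Dehn twist $\delta_{K3^{\circ}}$, and $D_L$ is specifically the union of cores of the $22$ two-handles in a handle decomposition of $K3^{\circ}$ with no $1$- or $3$-handles. Topological isotopy of $D_L$ and $D_L'$ comes from the fact that $\delta_{K3^{\circ}}$ is topologically isotopic to the identity rel boundary (Krannich--Kupers, Orson--Powell); Perron--Quinn is not the relevant input here.

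The crucial point you have inverted is how Theorem~\ref{thm: generalized vanishing theorem} enters. You propose to show directly that $\FBF(\psi\#\id_{S^2\times S^2})\neq 0$ on some capped-off version of $W\#(S^2\times S^2)$, using the tom Dieck machinery ``in reverse'' to produce a non-zero class. The paper does \emph{not} do this, and it is unclear how one would. Instead, the argument runs as follows. Because $D_L$ consists of \emph{all} the $2$-handle cores, the complement $K3^{\circ}\setminus\nu(D_L)$ is essentially a $4$-ball, and hence $(K3^{\circ}\#(S^2\times S^2))\setminus\nu(D_L)$ is a punctured $S^2\times S^2$. If $D_L$ and $D_L'$ were smoothly isotopic after one stabilization, then (via isotopy extension and a normal-bundle straightening lemma) $\delta_{K3^{\circ}}\#\id_{S^2\times S^2}$ would be isotopic rel boundary to $\id_{K3^{\circ}}\# f$ for some $f\in\Diff(S^2\times S^2)$ acting trivially on homology. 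Capping with a second $K3$, this gives $\delta_{K3\#K3\#(S^2\times S^2)}\simeq \id_{K3\#K3}\# f$. Now the connected-sum formula and Theorem~\ref{thm: generalized vanishing theorem} applied to $S^2\times S^2$ (which satisfies $2\chi+3\sigma=8>4$) force $\FBF^{\Pin}(\id_{K3\#K3}\# f)=0$. This contradicts the already-known non-vanishing of $\FBF^{\Pin}(\delta_{K3\#K3\#(S^2\times S^2)})$ from \cite{lin}.

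So the tom Dieck splitting is used precisely as a \emph{vanishing} mechanism on the unknown diffeomorphism $f$ of $S^2\times S^2$, while the non-vanishing input is an earlier result about the specific Dehn twist. Your plan leaves both the choice of $D_L$ (which must make the complement simple enough to force $f$ to live on $S^2\times S^2$) and the direction of the homotopy-theoretic computation unspecified in a way that would not close.
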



\begin{remark}
As mentioned in Theorem \ref{surface}, another interesting feature of our example is that the complements of $D_{L}$ and $D_{L}'$ are diffeomorphic. Examples with this feature was produced by Schwartz \cite{Schwartz19} (for closed manifolds, using non-triviallity of an h-cobordism) and Akbulut \cite{akbulut2021corks} (for embedded disks in the 4-ball, using  corks). Note that recent work of Juh\'asz--Miller--Zemke \cite{JMZ} and Hayden \cite{hayden} give some fascinating examples of knots in $S^3$ that bounds exotic surfaces in 4-ball. In those examples it is not clear if the complements of the surfaces are diffeomorphic or not.  
\end{remark}

We sketch the proof of Theorem \ref{surface} as follows (see Section~\ref{proof of exoticness} for details). Let $D_{L}=\bigsqcup_{22} D^{2}$ be the disjoint union of the cores of 2-handles in the punctured $K3$ and let $D'_{L}$ be the image of $D_{L}$ under the Dehn twist on the punctured $K3$ along its boundary. By a result in  \cite{krannich2021torelli} and \cite{OP},  this Dehn twist is topologically isotopic to the identity map relative to the boundary. Hence $D_{L}$ and $D'_{L}$ are topologically isotopic relative to the boundary. However, $D_{L}$ and $D'_{L}$ are not smoothly isotopic relative to the boundary even after a stabilization. Suppose they were. Then we show that there is a smooth isotopy between $\delta\# \id_{S^{2}\times S^{2}}$ and $\id_{K3\# K3}\# f$, where $\delta$ denote the Dehn twist on $K3\# K3$ and $f$ is a diffeomorphism $S^{2}\times S^{2}$. 
By the connected sum formula of the Bauer--Furuta invariant and our vanishing theorem (Theorem \ref{thm: generalized vanishing theorem}) in the case of $S^{2}\times S^{2}$, the $\Pin$-equivariant family Bauer--Furuta invariant of $\id_{K3\# K3}\# f$ is trivial. This contradicts with a result of the first author \cite{lin}, which states that the corresponding invariant of $\delta\# \id_{S^{2}\times S^{2}}$ is nontrivial. 

 It is worth noting that the argument in the current paper is essentially different from the argument in \cite{lin} by the first author. In \cite{lin}, there is a specific diffeomorphism $\delta\# \id_{S^{2}\times S^{2}}$. By partially computing it's family Bauer--Furuta invariant, one can show that the invariant is non vanishing. However, in the current paper, we just have a map of the form $\id_{K3\# K3}\# f$ with $f$ being a hypothetical diffeomorphism on $S^{2}\times S^{2}$. Since we don't have much control over $f$, we have to prove a general vanishing theorem for the family Bauer--Furuta invariant  that works for arbitrary $f$ that acts  trivially on homology. This is where extra arguments in equivariant stable homotopy theory have to get involved.

We also emphasize that in the proof of Theorem \ref{surface}, the $\Pin$-equvariance is essentially used multiple times for different purposes. First, the $\Pin$-symmetry is included in the definition of the equivariant Bauer--Furuta invariant, so that this invariant can be nonzero even after a stabilization. This special feature, which is different from many other gauge theoretic invariants (e.g., the Seiberg--Witten invariant, Donaldson's polynomial invariant, the $S^{1}$-equivariant and the non-equivariant Bauer--Furuta invariant), makes the $\Pin$-equivariant Bauer--Furuta invariant particularly suitable for detecting exotic phenomena under stabilizations. Second, because of this $\Pin$-symmetry, the map between spheres which underlies the Bauer--Furuta invariant must be a $\Pin$-equivariant map. This actually imposes a very strong constraint on the possible values of the Bauer--Furuta invariant. This constrain is crucial in the proof of our vanishing theorem.  

\begin{remark}
From the construction of $D_{L}$ and $D'_{L}$, we see that $L$ can be taken to be the attaching link in any Kirby diagram of $K3$ (without $1$-handles and $3$-handles). Different choices of $L$ will give different exotic surfaces. The  following proposition gives an upper bound of the stabilizations needed to make these surfaces smoothly isotopic. It is proved by combining the argument in \cite{AKMRS} with the recent work of Schwartz \cite{schwartz2021}, which generalizes the light bulb theorem to embedded disks.  
 \begin{proposition}\label{upper bound on stabilizations}
(1) Let $L_{0}\subset L$ be a sublink and let $D_{L_{0}}\subset D_{L}$ and $D'_{L_{0}}\subset  D'_{L}$ be the subsurfaces that contain all the components with boundary in $L_{0}$. Suppose $L_{0}$ is an unlink. Then $D_{L_{0}}$ is smoothly isotopic to $D'_{L_{0}}$ relative to the boundary (without any stabilizations). 

(2) Suppose $L$ contains an $m$-component unlink ($m\geq 0$). Then after $22-m$ stabilizations, $D_{L}$ and $D'_{L}$ are smoothly isotopic relative to the boundary. In particular, for any $L$, $D_{L}$ and $D'_{L}$ are smoothly isotopic after $22$ stabilizations.
\end{proposition}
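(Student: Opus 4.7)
The plan is to adapt the stabilization scheme of Auckly--Kim--Melvin--Ruberman--Schwartz \cite{AKMRS} by using the light-bulb theorem for embedded disks recently proved by Schwartz \cite{schwartz2021}. Both results upgrade a topological isotopy rel boundary of two properly embedded disks to a smooth one, provided a common transverse dual $2$-sphere exists; the topological isotopy itself is furnished by the theorems of Krannich \cite{krannich2021torelli} and Orson--Powell \cite{OP}, which together imply that the Dehn twist $\delta$ is topologically isotopic to the identity rel boundary, and in particular $D_L$ is topologically isotopic to $D'_L$.

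For Part~(1), the unlink hypothesis on $L_0 \subset L$ means that its components bound disjoint embedded disks $E_1, \dots, E_m$ in $S^3 = \partial(K3 \setminus B^4)$. The key point is that these disks, combined with the 2-handle structure of the punctured $K3$ (the cocores extended appropriately through the remaining handles), produce embedded transverse dual $2$-spheres for every component of $D_{L_0}$; since $\delta$ is supported in a collar of the boundary, parallel copies of these dual spheres pushed slightly into the fixed region of $\delta$ serve as common transverse duals for $D'_{L_0}$ as well. Schwartz's theorem, applied componentwise, then yields a smooth isotopy of $D_{L_0}$ to $D'_{L_0}$ rel boundary with no stabilizations.

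For Part~(2), Part~(1) already handles the cores over $L_0$. For each of the $22 - m$ remaining components $L_i \subset L \setminus L_0$, we perform one $S^2 \times S^2$ stabilization and, following \cite{AKMRS}, tube one $S^2$-factor into the corresponding core $D_i$, turning the other factor into an artificial transverse dual sphere for the stabilized $D_i$ (and equally for $D'_i$, since $\delta$ is supported away from the tubing region). After all $22-m$ stabilizations, every component of the stabilized $D_L$ admits a common transverse dual sphere, so Schwartz's theorem applied componentwise produces the desired smooth isotopy rel boundary. Taking $m=0$ recovers the universal upper bound of $22$.

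The hard part will be Part~(1): explicitly constructing the dual spheres for $D_{L_0}$ and verifying that they can be chosen simultaneously disjoint from each other and from all the other cores of $D_L$. The fact that $L_0$ is an unlink in $S^3$ (rather than merely a disjoint union of unknots) is essential here, because it allows the spanning disks $E_j$ to be chosen disjoint, which in turn permits the resulting family of dual spheres to be assembled compatibly into a single system on which Schwartz's theorem can be applied coherently.
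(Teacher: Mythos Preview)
Your approach to Part~(1) is genuinely different from the paper's, and the gap you yourself flag as ``the hard part'' is real. The paper does \emph{not} use Schwartz's light-bulb theorem for Part~(1) at all. Instead it observes that since $L_0$ is an unlink, one can choose an essential loop $\gamma'$ in $SO(4)$ that preserves $L_0$ setwise (concretely: conjugate $L_0$ to the standard unlink and use rotations about an axis). The resulting Dehn twist $\delta'_{K3^\circ}$ is smoothly isotopic rel boundary to $\delta_{K3^\circ}$, but now $\delta'_{K3^\circ}(D_{L_0})$ differs from $D_{L_0}$ only by a Dehn twist of the domain $\bigsqcup D^2$, which is isotopic to the identity. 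No dual spheres are needed.

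Your proposed route---constructing common transverse dual spheres from the spanning disks $E_j$ together with ``cocores extended appropriately''---does not obviously work. The cocores of the $2$-handles have boundary on the belt spheres, which are knots in the boundary of the $4$-handle; capping them off there yields surfaces, not spheres in general (this is exactly what the paper uses in its Part~(2) argument to check ordinariness, not to build spheres). The spanning disks $E_j$ bound $K_j$ rather than meeting it transversely, so they do not directly give duals either. You would need a genuinely new construction here, and nothing in the sketch supplies it.

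For Part~(2) your outline is in the right spirit, but the paper proceeds more carefully: it runs an induction, adding one component $K_{i+1}$ and one stabilization at a time, and at each step verifies the three hypotheses needed for the disk version of the AKMRS argument (simply-connected complements via an explicit lemma, ordinariness of $[D_{K_{i+1}}]$ via the cocore-plus-Seifert-surface dual, and homotopy rel boundary via injectivity of $\pi_2$ under inclusion). Your ``apply Schwartz componentwise after all stabilizations'' would require controlling all of these conditions simultaneously in the complement of many disks and many dual spheres, which is substantially more bookkeeping than the inductive version.
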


In Section \ref{proof of exoticness}, we give an example of $L$ with all components being the unknot. By Proposition \ref{upper bound on stabilizations}, each component of $D_{L}$ is smoothly isotopic to the  corresponding component  of $D'_{L}$. Therefore,  in this example, the  exotica that persists a stabilization is the knotting between different components.  Proposition \ref{upper bound on stabilizations} also motivates the following question,
 
 \begin{question}
 Given $L$, what is the minimal number of stablizations that can make $D_L$ and $D'_L$ smoothly isotopic?
 \end{question}

\end{remark}

\subsection{Smale Conjecture} Our second motivation comes from a famous conjecture by Smale. In dimension 4, this conjecture states that the natural inclusion $O(5)\hookrightarrow \operatorname{Diff}(S^4) $ is a homotopy equivalence [Kirby's Problem List~\cite{kirby}, Problem 4.34, 4.126]. Recently,  Watanabe \cite{watanabe} disproved Smale Conjecture by showing that the induced maps on higher homotopy groups are not surjective. Here we state a special case of his 
result which is most relevant to our discussion:
\begin{theorem}[Watanabe, \cite{watanabe}] There is a non-torsion element $\gamma_{1}\in \pi_{1}(\operatorname{Diff}(S^{4}))$ such that for any nonzero integer  $d$, one has 
$$
d\gamma_{1} \notin \operatorname{image}(\pi_{1}(O(5))\rightarrow \pi_{1}(\operatorname{Diff}(S^{4}))).
$$ Furthermore, $\gamma_{1}$ is an exotic loop of diffeomorphisms. Namely, $\gamma_{1}$ is null-homotopic in the homeomorphism group of $S^{4}$.
\end{theorem}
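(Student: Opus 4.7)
The plan is to construct $\gamma_1$ explicitly as a loop obtained by a family graph-clasper surgery on $S^4$, and detect its nontriviality via a configuration-space integral invariant of families of diffeomorphisms of $4$-manifolds that generalizes Kontsevich's integrals for long knots. First, I would pick a trivalent graph $\Gamma_0$ of loop-order one (the simplest useful choice being a theta-graph: two vertices joined by three edges) embedded in a small ball $D^4 \subset S^4$, thicken it to a graph clasper, and perform the fiberwise Borromean-type surgery parameterized by a circle's worth of twisting data. The result is a smooth loop $\{\phi_t\}_{t \in S^1}$ of compactly supported diffeomorphisms of $S^4$, which will be $\gamma_1$. Topological null-homotopy of $\gamma_1$ should then follow because, in the topological category, the clasper surgery collapses to an isotopically trivial family via topological isotopy extension and the vanishing of the relevant obstructions to unknotting a clasper topologically.

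Next, I would define the detecting invariant $Z_{\Gamma} \colon \pi_1(\operatorname{Diff}(S^4)) \to \R$ along the Bott--Cattaneo--Rossi / Fulton--MacPherson template. Given a loop $\gamma$, form the mapping torus $E_{\gamma} \to S^1$ and work on the fiberwise Fulton--MacPherson compactified configuration space $\operatorname{Conf}_{V(\Gamma)}(E_{\gamma}/S^1)$. Build a propagator $\omega$, a closed form on $\operatorname{Conf}_2(E_{\gamma}/S^1)$ Poincar\'e dual to the diagonal with prescribed behavior at infinity, and set
\[
Z_{\Gamma}(\gamma) \;=\; \int_{\operatorname{Conf}_{V(\Gamma)}(E_{\gamma}/S^1)} \prod_{e \in E(\Gamma)} \pi_e^{\ast}\,\omega.
\]
A Stokes-type analysis on the stratified boundary of the compactification, verifying that principal and anomalous face contributions cancel or vanish (the anomaly being the delicate point, controllable by choosing an appropriate framing correction term), shows that $Z_{\Gamma}$ is independent of the propagator and yields a group homomorphism out of $\pi_1(\operatorname{Diff}(S^4))$.

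The heart of the proof, and the main obstacle, is to evaluate $Z_{\Gamma_0}$ on $\gamma_1$ and on the image of $\pi_1(O(5))$. Vanishing on the linear image is comparatively mild: choose an $SO(5)$-invariant propagator and a symmetry/degree argument forces the integral over a linear family to vanish, so since $Z_{\Gamma_0}$ is a homomorphism, $Z_{\Gamma_0}(d\gamma_1) = d\cdot Z_{\Gamma_0}(\gamma_1)$ eliminates every nonzero multiple at once. Proving $Z_{\Gamma_0}(\gamma_1)\neq 0$ is the hard step: one localizes the integral to the principal boundary faces of the configuration space, where the integrand effectively reads off the pairwise linking numbers of the edges of $\Gamma_0$ in the surgery datum, and one shows that the diagrammatic sum over automorphisms and edge orientations of $\Gamma_0$ evaluates to a nonzero rational weight rather than cancelling. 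Once such a nonzero weight is produced for a single $\Gamma_0$, simultaneously $\gamma_1$ is non-torsion, no nonzero multiple of it lies in the image of $\pi_1(O(5))$, and $\gamma_1$ becomes null-homotopic after the forgetful map to $\operatorname{Homeo}(S^4)$, completing all three conclusions of the theorem.
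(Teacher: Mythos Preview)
The paper does not prove this theorem; it is quoted as a result of Watanabe \cite{watanabe} and serves only as background motivation. There is therefore no ``paper's own proof'' to compare against. Your proposal is not a proof of anything in the present paper but rather a sketch of Watanabe's original argument, and as such it is broadly faithful to his strategy: construct $\gamma_1$ by graph-clasper (Borromean) surgery parametrized by $S^1$, detect it by a Kontsevich-type configuration-space integral $Z_\Gamma$ for the theta graph, show $Z_\Gamma$ vanishes on linear loops, and compute $Z_\Gamma(\gamma_1)\neq 0$ by a localization/Kuperberg--Thurston style argument.

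That said, your sketch glosses over the genuinely hard parts of Watanabe's work. The anomaly analysis in dimension $4$ is not ``controllable by choosing an appropriate framing correction term'' in any routine way; Watanabe's treatment of hidden and anomalous faces is substantial and specific to the even-dimensional setting. Likewise, the assertion that topological null-homotopy ``should follow'' from collapsing the clasper topologically is not automatic: Watanabe's argument for exoticness passes through an Alexander-trick-type statement for families in the topological category. Finally, the nonvanishing computation $Z_{\Gamma_0}(\gamma_1)\neq 0$ is the core of the paper you are summarizing, not a step one can wave through. If you intend this as an outline of Watanabe's proof, it is on the right track but would need these points filled in with real arguments; if you intend it as something the present paper proves, that is a misreading --- the authors only cite the result.
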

We note that the following important case is still open:

\begin{conjecture}
[Smale]\label{conjecture: smale}
Any orientation preserving self-diffeomorphism of $S^4$ is smoothly isotopic to Identity.
\end{conjecture}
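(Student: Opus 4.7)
The plan begins with the observation that Theorem~1.2 of the present paper shows the $S^{1}$-equivariant family Bauer--Furuta invariant vanishes on every orientation-preserving diffeomorphism of $S^{4}$. This gives a necessary but far-from-sufficient condition for smooth isotopy to the identity, since Smale's conjecture is a classification statement rather than the vanishing of a single obstruction. My proposal is therefore a multi-stage refinement of the invariant, together with a classification attempt, and I expect the last stage to be the genuine obstacle.

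First, I would try to upgrade the vanishing to the full $\Pin$-equivariant setting on $S^{4}$. The authors note that $\Pin$-equivariance places strong constraints on the possible values of the Bauer--Furuta invariant, and they observe a ``curious'' element in the $\Pin$-equivariant stable homotopy group of spheres that could potentially be realized by an exotic self-diffeomorphism of $S^{4}$. The first concrete task is to decide whether this element is geometrically realizable by some $f\in \Diff(S^{4})$: if it is, then Smale's conjecture fails and the goal is redirected toward producing such a counterexample; if it is not, one obtains a stronger vanishing theorem that, combined with the connected sum formula, shows that the $\Pin$-equivariant family Bauer--Furuta invariant is insensitive to the mapping class group $\pi_{0}(\Diff(S^{4}))$.

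Assuming full vanishing at the stable equivariant level, the second stage would attempt an unstable refinement. The Bauer--Furuta construction is inherently stable: one suspends by large finite-dimensional Sobolev subspaces before extracting a map of spheres, and it is precisely this suspension that may allow an exotic diffeomorphism to become invisible. The plan would be to package the finite-dimensional approximation of the nonlinear Seiberg--Witten map, before stabilization, as an invariant in an unstable $\Pin$-equivariant cohomotopy set, and then to prove that the induced map $\pi_{0}(\Diff(S^{4})) \to \pi_{\ast}^{\Pin,\,\mathrm{unst}}$ is injective. Combined with vanishing from the first stage, injectivity would force $\pi_{0}(\Diff(S^{4})) = 0$, which is Smale's conjecture.

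The main obstacle, and the reason this conjecture has resisted resolution for decades, is that every currently available incarnation of the Bauer--Furuta machinery is stable, so no vanishing theorem of the kind proved here can by itself close a conjecture about smooth isotopy classes: the invariant simply is not sensitive enough to the unstable information. Moreover, the existence of the curious $\Pin$-equivariant element hints, somewhat ominously, that a carefully refined invariant might in fact \emph{disprove} the conjecture rather than confirm it, in parallel with Watanabe's disproof in higher homotopy degree. A successful affirmative proof would seem to require either a genuinely new unstable invariant on $\Diff(S^{4})$, or a direct handle-theoretic attack in the spirit of Cerf theory adapted to dimension four, neither of which the techniques of the present paper put within reach.
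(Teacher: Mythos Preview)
The statement you are addressing is not a theorem in the paper; it is Conjecture~\ref{conjecture: smale}, explicitly flagged as open (``the following important case is still open''). There is therefore no proof in the paper to compare against. Your proposal is not a proof at all but a speculative research programme, and you yourself acknowledge this in the final paragraph when you concede that ``neither of which the techniques of the present paper put within reach.''

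Concretely, your first stage hinges on deciding whether the curious element $\alpha_{0}\in\pi^{\Pin}_{1,\mathcal{U}}(S^{0})$ of Theorem~\ref{theorem main: curious Pin(2)-element} is geometrically realized; the paper leaves this completely open. Your second stage posits an unstable $\Pin$-equivariant refinement of the Bauer--Furuta invariant and an injectivity statement for the map $\pi_{0}(\Diff(S^{4}))\to \pi_{\ast}^{\Pin,\mathrm{unst}}$; no such construction exists in the literature, and there is no reason to expect injectivity even if it did. In short, each stage contains at least one step that is either an open problem or an undefined object, so the proposal cannot be assessed as a proof. The honest summary is that you have correctly diagnosed why the paper's vanishing results do not touch Smale's conjecture, but you have not supplied any argument that does.
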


Watanabe's proof is not gauge theoretic. Instead, he used  Kontsevich's characteristic classes to detect $\gamma_{1}$. Note that gauge theory has also been very successful in detecting exotic diffeomorphisms on 4-manifolds \cite{Akbulut2014,AKHM,BK20, KM2020, lin, rubermanexotic}. In an upcoming paper by Auckly--Ruberman \cite{Aucklyexoticfamily}, they succeeded in using gauge theory to detect exotic loops (and elements in higher homotopy groups) in the space of diffeomorphisms of a 4-manifold. Given these developments, one may ask the following two general questions:
\begin{question}\label{ques: exotic loop detection}
Can $\gamma_{1}$ and its multiples be detected by gauge theoretic invariants?   
\end{question}
\begin{question}\label{ques: exotic diff detection}
Can we use gauge theoretic invariants to attack Conjecture~\ref{conjecture: smale}? (One can see Akbulut's recent work~\cite{akbulut2021smale} towards this conjecture.)
\end{question}

Note that a negative answer to Question \ref{ques: exotic loop detection} would also be very interesting because it implies that even on $S^{4}$, there exist exotic phenomena that are invisible from gauge theory. 

Since the family Bauer--Furuta invariant is a homotopy refinment of the family Seiberg--Witten invariant \cite{baraglia2019bauer,BauerFuruta1}, it is natural to specialize these questions to the the family Bauer--Furuta invariants. Our results (Theorem \ref{theorem main: vanishing result for S4} and Theorem \ref{theorem main: vanishing result for loops}) provide partial negative answers to these questions.
 
\begin{theorem}\label{theorem main: vanishing result for S4}
Let $f$ be an orientation-preserving diffeomorphism of $S^4$.
Let $\widetilde{f}$ be a lift of $f$ to an automorphism of the spin structure of $S^4$.
Then we have 
\[
\FBF^{S^{1}}(S^{4},f,\widetilde{f})=0\quad\text{and}\quad \FBF^{\{e\}}(S^{4},f,\widetilde{f})=0.
\]
\end{theorem}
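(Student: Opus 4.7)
The plan is to deduce Theorem \ref{theorem main: vanishing result for S4} as a direct specialization of the generalized vanishing theorem (Theorem \ref{thm: generalized vanishing theorem}), which, as described in the introduction, applies to a broad class of spin $4$-manifolds under hypotheses on the homological action of the diffeomorphism and on $b_{+}$ and $b_{1}$. For $S^{4}$ all of these hypotheses should hold essentially automatically, so the bulk of the argument is a straightforward verification.

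First I would record the relevant invariants: $b_{+}(S^{4}) = b_{1}(S^{4}) = 0$ and $\sigma(S^{4}) = 0$, so the complex index of the Dirac operator vanishes; moreover, the spin structure on $S^{4}$ is unique up to isomorphism, and every orientation-preserving diffeomorphism admits two lifts to the spinor bundle. Parameterizing the Seiberg--Witten construction over the mapping torus $M_{f} \to S^{1}$ determined by $\widetilde{f}$ yields $\FBF^{S^{1}}(S^{4}, f, \widetilde{f}) \in \pi_{1}^{S^{1}}(S^{0})$, and forgetting equivariance gives $\FBF^{\{e\}}(S^{4}, f, \widetilde{f}) \in \pi_{1}^{s}(S^{0}) \cong \mathbb{Z}/2$. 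The homological hypothesis of the general vanishing theorem is automatic for $S^{4}$: since $H^{i}(S^{4};\mathbb{R}) = 0$ for $i \neq 0, 4$ and $f$ preserves orientation, $f^{*}$ acts as the identity on all of $H^{*}(S^{4};\mathbb{R})$, in particular trivially on the (zero) positive-definite subspace. Applying Theorem \ref{thm: generalized vanishing theorem} with $G = S^{1}$ and then with $G = \{e\}$ yields the desired vanishing.

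The main obstacle is not this deduction but the proof of Theorem \ref{thm: generalized vanishing theorem} itself, which proceeds via the tom Dieck splitting of equivariant stable homotopy groups into summands indexed by conjugacy classes of closed subgroups. For $G = S^{1}$ and $X = S^{4}$, the top summand (for $H = S^{1}$) is controlled by the reducible Seiberg--Witten locus and contributes nothing when $b_{+} = 0$, while the summands for proper subgroups $H$ are transfer-type contributions in $\pi_{*}^{s}(BWH_{+})$ which should be killed by the trivial action of $f$ on cohomology. This argument does not extend to $G = \Pin$, because the $\Pin$-equivariant tom Dieck splitting carries an additional quaternionic summand --- precisely the ``curious element'' flagged in the abstract --- which is why Theorem \ref{theorem main: vanishing result for S4} is stated only for $G = S^{1}$ and $G = \{e\}$.
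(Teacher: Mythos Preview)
Your proposal is correct and is exactly the paper's approach: Theorem~\ref{theorem main: vanishing result for S4} is the special case $X=S^{4}$ of Theorem~\ref{thm: generalized vanishing theorem}, where $b_{1}=0$, the spin structure is automatically preserved, and $f^{*}$ acts trivially on cohomology since $f$ is orientation-preserving. The only missing datum in your write-up is the numerical check $2\chi(S^{4})+3\sigma(S^{4})=2\cdot 2+3\cdot 0=4$, which places you in case~(1) of Theorem~\ref{thm: generalized vanishing theorem} (this is the precise hypothesis, not a condition on $b_{+}$), yielding the stated vanishing of $\FBF^{S^{1}}$ and $\FBF^{\{e\}}$ but not of $\FBF^{\Pin}$.
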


 \begin{theorem}\label{theorem main: vanishing result for loops} Let $\gamma$ be an exotic loop in $\operatorname{Diff}(S^{4})$ then one has  
\[
\FBF^{S^{1}}(S^{4},\gamma)=0\quad\text{and}\quad \FBF^{\{e\}}(S^{4},\gamma)=0.
\]
If we further  assume that $\gamma$ is  divisible by $2$ in $\pi_{1}(\operatorname{Diff}(S^{4}))$ (e.g., $\gamma=2k \gamma_{1}$), then \[
\FBF^{\Pin}(S^{4},\gamma)=0.\]
\end{theorem}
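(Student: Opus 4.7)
The plan is to extend the strategy of Theorem~\ref{theorem main: vanishing result for S4} from a single diffeomorphism to the parametrized setting of a loop, using the tom Dieck splitting of equivariant stable homotopy together with a divisibility/transfer argument for the $\Pin$ case. First, a based exotic loop $\gamma \in \pi_{1}(\operatorname{Diff}^{+}(S^{4}))$ determines, via the clutching construction, a smooth $S^{4}$-bundle $E_{\gamma} \to S^{2}$, and the family Bauer--Furuta invariant $\FBF^{G}(S^{4}, \gamma)$ is precisely the Bauer--Furuta class of this bundle. Since $b^{+}(S^{4}) = 0$ and the family Dirac index vanishes, this class naturally lies in the reduced equivariant stable cohomotopy group $\tilde{\pi}_{2}^{G}(S^{0})$, where the shift by $2$ reflects the base $S^{2}$.

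Next, I would apply the tom Dieck splitting to decompose $\tilde{\pi}_{2}^{G}(S^{0})$ into summands indexed by conjugacy classes of closed subgroups $H \subseteq G$, following the same strategy as in the proof of the generalized vanishing theorem (Theorem~\ref{thm: generalized vanishing theorem}). The summands coming from subgroups containing a nontrivial circle vanish automatically, because $b^{+}(S^{4})=0$ forces the corresponding fixed-point Bauer--Furuta map to be stably the identity between matching representation spheres. The remaining summands fall into two types: a \emph{topological} summand coming from the trivial subgroup $H=\{e\}$, detected via the Atiyah--Singer family index theorem by characteristic classes of the underlying topological bundle $E_{\gamma}$; and, when $G=\Pin$, additional summands coming from the finite quotient $\Pin/S^{1}=\mathbb{Z}/2$, which turn out to be $2$-torsion (as reduced stable cohomotopy of classifying spaces of finite $2$-groups).

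For $G = \{e\}$ and $G = S^{1}$, only the topological summand is present. The exotic hypothesis provides a null-homotopy of $\gamma$ inside $\operatorname{Homeo}(S^{4})$, which trivializes the topological $S^{4}$-bundle $E_{\gamma}$. This trivialization forces all topological characteristic classes of the family to vanish, and hence $\FBF^{\{e\}}(S^{4},\gamma) = \FBF^{S^{1}}(S^{4},\gamma) = 0$. For $G = \Pin$ with $\gamma = 2\eta$, the topological summand is again killed by exoticness; the remaining $2$-torsion summands are killed by additivity of the Bauer--Furuta invariant under concatenation, since $\FBF^{\Pin}(S^{4},\gamma) = 2\FBF^{\Pin}(S^{4},\eta)$ is $2$-divisible and multiplication by $2$ annihilates any $2$-torsion group. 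Combining these two sources of vanishing yields $\FBF^{\Pin}(S^{4},\gamma) = 0$.

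The main obstacle I anticipate is in verifying the topological nature of the $H = \{e\}$ summand: one must carefully compare the smooth Bauer--Furuta construction with the topological family index and check that, for $S^{4}$, the relevant stable-cohomotopy class really depends only on the topological type of the bundle. A secondary, more bookkeeping-level concern is to verify that the non-topological $\Pin$-equivariant summands are indeed all $2$-torsion, which requires computing the stable cohomotopy of the classifying spaces $BW_{\Pin}H$ for each proper subgroup $H \subseteq \Pin(2)$ and tracking how the Bauer--Furuta map restricts to each fixed-point locus. Once these identifications are in hand, the conclusion follows formally from tom Dieck splitting and the additivity/divisibility considerations above.
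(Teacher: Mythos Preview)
Your proposal has a genuine gap, and your overall architecture differs from the paper's in a way that matters.

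The central issue is your claim that the $H=\{e\}$ summand in the tom Dieck splitting is ``topological'', i.e.\ depends only on the underlying topological $S^{4}$-bundle and is detected by family index-theoretic characteristic classes. You flag this yourself as ``the main obstacle'', and it is: the family Bauer--Furuta invariant is a stable homotopy class built from the Seiberg--Witten map, not merely from its linearization, and there is no mechanism in your outline that reduces the $H=\{e\}$ component to a topological invariant. Indeed, in the paper's incomplete universe $\mathcal{U}$ the only isotropy subgroups are $\{e\}$, $S^{1}$, and $\Pin$; by Lemma~\ref{lem: computation of the kernel 1} the $H=\{e\}$ summand of $\pi^{\Pin}_{2,\mathcal{U}}(S^{0})$ is exactly the kernel of the $S^{1}$-fixed-point map, and by Proposition~\ref{prop: algebraic vanishing result4} it equals $\pi_{2}^{s}(\mathbb{RP}^{3})\cong\mathbb{Z}/2$. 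There are no further ``finite subgroup'' summands of the kind you posit. So the $2$-torsion that the divisibility hypothesis is meant to kill is \emph{this} $H=\{e\}$ summand, not some separate piece; if your topological-vanishing argument for the $H=\{e\}$ summand were valid, you would obtain $\FBF^{\Pin}(S^{4},\gamma)=0$ with no divisibility assumption at all, which is stronger than the theorem asserts and is not known.

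The paper's route avoids this entirely. It uses the exotic hypothesis only to guarantee that $\gamma$ is spin-liftable (Definition~\ref{defi: spin-liftable}), so that $\FBF^{\Pin}(S^{4},\gamma)$ is defined. The key geometric input is Lemma~\ref{lem: FSW is of BF-type}: on the $S^{1}$-fixed locus the approximated Seiberg--Witten map reduces to the linear operator $d^{*}\oplus d^{+}$, and after composing with the Pontryagin--Thom collapse the result is null-homotopic. This forces $\FBF^{\Pin}(S^{4},\gamma)$ into the kernel of $(-)^{S^{1}}$, hence (by Proposition~\ref{prop: algebraic vanishing result4}) it equals either $0$ or the unique nonzero element $\alpha_{1}$, and that element satisfies $\operatorname{res}^{\Pin}_{S^{1}}(\alpha_{1})=0$. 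This gives $\FBF^{S^{1}}=0$ and $\FBF^{\{e\}}=0$ directly, with no appeal to topological invariance. For the $\Pin$ statement, one writes $\gamma=2\gamma'$; a point you do not address is that $\gamma'$ need not be exotic and hence need not be spin-liftable, so the paper first adjusts $\gamma'$ by an essential loop in $SO(5)\subset\operatorname{Diff}(S^{4})$ to arrange spin-liftability without changing $2\gamma'$. Then $\FBF^{\Pin}(S^{4},\gamma')\in\{0,\alpha_{1}\}$ and $\FBF^{\Pin}(S^{4},\gamma)=2\FBF^{\Pin}(S^{4},\gamma')=0$ since $\alpha_{1}$ is $2$-torsion.
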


Theorem \ref{theorem main: vanishing result for S4} and Theorem \ref{theorem main: vanishing result for loops} together imply that  the $S^1$-equivariant version and the non-equivariant version of the family Bauer--Furuta invariant do not detect exotic (loops of) diffeomorphisms on $S^{4}$. In particular, Theorem \ref{theorem main: vanishing result for S4} rules out the possibility that the non-equivariant Bauer--Furuta invariant of a diffeomorphism (which is an element in $\pi^{s}_{1}(S^{0})$) equals the Hopf map. And Theorem \ref{theorem main: vanishing result for loops} rules out the possibility that the non-equivariant Bauer--Furuta invariant of an exotic loop of diffeomorphisms (which is an element in $\pi^{s}_{2}(S^{0})$) equals the square of the Hopf map.

Note that Theorem \ref{theorem main: vanishing result for S4} has a generalization (Theorem \ref{thm: generalized vanishing theorem}) that applies to a large family of spin 4-manifolds. However, we do not have a vanishing result for the $\Pin$-equivariant family Bauer--Furuta invariants that applies to all diffeomorphisms on $S^{4}$. Actually, we discovered a curious 2-torsion element in the $\Pin$-equivariant stable homotopy group of spheres. This element satisfies a basic requirement, called \textit{``BF-type''}, of being realizable as $\FBF^{\Pin}(S^{4},f,\widetilde{f})$ for some exotic diffeomorphism $f$. Roughly speaking, this condition means that the equivariant homotopy class is trivial 
when restricted to the $S^{1}$-fixed points. We refer to Definition \ref{definition: elements of BF-type} for a precise definition. 
\begin{theorem}\label{theorem main: curious Pin(2)-element}
There is an unique nonzero element $\alpha_{0}\in \pi^{\Pin}_{1,\mathcal{U}}(S^{0})$ that is of BF-type. 
\end{theorem}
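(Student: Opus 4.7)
The plan is to apply the tom Dieck splitting theorem to decompose $\pi_{1,\mathcal{U}}^{\Pin}(S^0)$ as a direct sum indexed by conjugacy classes of closed subgroups of $\Pin(2)$. The closed subgroups fall into four families: the cyclic groups $C_n \subset S^1$ (with $C_1$ trivial), the binary dihedral groups $\mathrm{Dic}_n$ generated by $C_{2n}$ and the element $j$ (with $\mathrm{Dic}_1 = \langle j\rangle$ of order $4$, disjoint from the $C_4 \subset S^1$), the circle $S^1$, and $\Pin(2)$ itself. Each $(H)$-summand is identified with a stable stem of the classifying space of the Weyl group $W_{\Pin}(H)$, computed using the restricted universe $\mathcal{U}^H$.

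Next, I would translate the BF-type condition through the splitting. The $S^1$-fixed-point map $\pi_{1,\mathcal{U}}^{\Pin}(S^0) \to \pi_{1, \mathcal{U}^{S^1}}^{\mathbb{Z}/2}(S^0)$ is compatible with tom Dieck in the sense that the $(H)$-summand of the source maps into the $(HS^1/S^1)$-summand of the target. Thus the cyclic $H \subseteq S^1$ (namely the $C_n$'s and $S^1$) all contribute to the trivial-subgroup summand $\pi_1^s(B\mathbb{Z}/2_+)$ of the target, while the $\mathrm{Dic}_n$'s and $\Pin(2)$ contribute to the full-subgroup summand $\pi_1^s$. Requiring BF-type thus imposes two linear relations on the source.

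To identify $\alpha_0$, I would compute each relevant source summand in degree one. For each finite Weyl group $W$, one has $\pi_1^s((BW)_+) = \pi_1^s \oplus \pi_1^s(BW)$, with $\pi_1^s(BW)$ controlled by $W_{ab}$ via the Atiyah--Hirzebruch spectral sequence. The universe restrictions $\mathcal{U}^H$ should be small enough that all but finitely many summands vanish, reducing the question to a finite $\mathbb{F}_2$-linear-algebra problem. One then verifies that the kernel of the pair of BF-type relations is exactly one-dimensional, which yields both existence and uniqueness of the nonzero class $\alpha_0$.

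The main obstacle will be the precise computation of the $S^1$-fixed-point map on each tom Dieck summand, and in particular understanding how the $\eta$-classes and transfer maps from different $(H)$-summands combine in the target $\pi_1^s(B\mathbb{Z}/2_+)$. This transfer-theoretic computation is what pins down the single nontrivial relation that forces uniqueness of $\alpha_0$, and keeping careful track of the Weyl-group equivariance throughout is where the real work lies.
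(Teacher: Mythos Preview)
Your plan contains a real misunderstanding of how the incomplete universe enters the tom Dieck splitting. In the paper's setup the universe is $\mathcal{U}=\mathbb{H}^{\infty}\oplus\widetilde{\mathbb{R}}^{\infty}\oplus\mathbb{R}^{\infty}$, and the splitting (Theorem~\ref{thm: tom Dieck}) runs only over conjugacy classes of \emph{isotropy subgroups of $\mathcal{U}$}. Since $\Pin$ acts freely on $\mathbb{H}\setminus\{0\}$, trivially on $\mathbb{R}$, and through $\Pin/S^{1}$ on $\widetilde{\mathbb{R}}$, the only isotropy subgroups are $\{e\}$, $S^{1}$, and $\Pin$. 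There are no $C_{n}$-summands and no $\mathrm{Dic}_{n}$-summands to contend with, and no ``all but finitely many summands vanish'' argument is needed. Your proposed transfer-theoretic bookkeeping across infinitely many subgroups is solving a problem that does not arise.

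Once one restricts to the three relevant summands, the structure is much simpler than a general $\mathbb{F}_{2}$-linear-algebra problem. The paper's Lemma~\ref{lem: computation of the kernel 1} shows, by directly comparing the tom Dieck splittings of source and target, that the $S^{1}$-fixed-point map carries the $S^{1}$- and $\Pin$-summands of $\pi^{\Pin}_{1,\mathcal{U}}(S^{0})$ isomorphically onto the two summands of $\pi^{C_{2}}_{1,\mathcal{U}_{3}}(S^{0})$, so the kernel is \emph{exactly} the $\{e\}$-summand. Via the Adams isomorphism this summand is $\pi^{s}_{1}\big((S^{\widetilde{\mathbb{R}}}\wedge E\Pin_{+})/\Pin\big)=\pi^{s}_{1}\big(\operatorname{Th}(B\Pin,\xi_{\widetilde{\mathbb{R}}})\big)$; note the twist by the adjoint representation $\operatorname{Ad}(\Pin)\cong\widetilde{\mathbb{R}}$, which you omitted when you wrote ``a stable stem of the classifying space of the Weyl group.'' The computation then finishes by identifying the low skeleton of this Thom space with $\mathbb{RP}^{3}$ and reading off $\pi^{s}_{1}(\mathbb{RP}^{3})\cong\mathbb{Z}/2$. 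Your outline never reaches this Thom-space identification, which is where the actual content lies.
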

Theorem \ref{theorem main: curious Pin(2)-element} suggests that the $\Pin$-equivariant family Bauer--Furuta invariant could potentially be useful to detect a counter example of Conjecture \ref{conjecture: smale}. It would be extremely interesting to calculate the $\Pin$-equivariant family Bauer--Furuta invariant of existing potential counter examples \cite{budney2019knotted,gay2021}. Furthermore, Theorem \ref{theorem main: vanishing result for S4} and Theorem \ref{theorem main: curious Pin(2)-element} together provide strong evidence indicating that the $j$-symmetry is an indispensable ingredient if Seiberg--Witten theory can be used to attack Conjecture \ref{conjecture: smale}. For this reason, it is natural to try to detect this element $\alpha_{0}$ by studying the induced map on the $G$-equivariant Borel (co)homology for various subgroups $G$ of $\Pin$, because they correspond to various versions of monopole Floer homology. However, we will give an explicit space level description of $\alpha_{0}$ in Section \ref{section: properties of FBF} and prove that 
 this method does not detect $\alpha_{0}$. 
\subsection{The vanishing theorem} Now we state the vanishing theorem for the family Bauer--Furuta invariant of a diffeomorphism. As we mentioned, this theorem generalizes Theorem \ref{theorem main: vanishing result for S4} and it plays an essential role in the proof of Theorem \ref{surface}.
\begin{theorem}\label{thm: generalized vanishing theorem}
Let $X$ be an oriented spin 4-manifold with $b_{1}(X)=0$. We fix a homology orientation and a spin structure on $X$. Let $f:X\rightarrow X$ be a diffeomorphism which preserves the spin structure and acts trivially on $H^{*}(X;\mathbb{R})$. Then for any lift  $\widetilde{f}$ of $f$ to an automorphism on the spin structure, we have the following results:
\begin{enumerate}
    \item Suppose $2\chi(X)+3\sigma(X)=4$. Then the invariants $$\FBF^{S^{1}}(X,f,\widetilde{f})\quad \text{and }\quad \FBF^{\{e\}}(X,f,\widetilde{f})$$ are both vanishing.
    \item Suppose $2\chi(X)+3\sigma(X)>4$. Then the invariants   $$\FBF^{\Pin}(X,f,\widetilde{f}),\quad \FBF^{S^{1}}(X,f,\widetilde{f})\quad \text{and}\quad \FBF^{\{e\}}(X,f,\widetilde{f})$$ are all vanishing.
\end{enumerate}
\end{theorem}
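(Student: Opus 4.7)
The plan is to realise $\FBF^{G}(X,f,\widetilde{f})$ as an element of an equivariant stable stem $\pi^{G}_{1}(S^{V_{X}})$ (for $G\in\{\{e\},S^{1},\Pin\}$) and to apply the tom Dieck splitting theorem to show that each ``isotypical'' component of this element vanishes. First I would set up the family Bauer--Furuta invariant using the mapping torus $\widetilde{X}=X\times_{f}S^{1}$ equipped with the vertical spin structure determined by $\widetilde{f}$. Since $b_{1}(X)=0$ and $f$ acts trivially on $H^{*}(X;\R)$, the bundles over $S^{1}$ of harmonic $1$-forms and of self-dual harmonic $2$-forms are both canonically trivial; after finite-dimensional approximation of the families Seiberg--Witten map one therefore obtains a $G$-equivariant stable map representing an element of $\pi^{G}_{1}(S^{V_{X}})$, where $V_{X}=\ind_{\C}(D_{X})\cdot\quat-b_{+}(X)\cdot\tilR$ is a virtual $\Pin$-representation.

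Next I would apply the tom Dieck splitting
\[
\pi^{G}_{1}\bigl(S^{V_{X}}\bigr)\;\cong\;\bigoplus_{(H)\le G}\pi^{s}_{1}\!\bigl(EW_{G}(H)_{+}\wedge_{W_{G}(H)} S^{V_{X}^{H}}\bigr),
\]
where the sum runs over conjugacy classes of closed subgroups $H\le G$ and the projection onto the $(H)$-summand is the geometric $H$-fixed-points functor applied to the BF representative. For the top summand $(H=G)$: the $G$-fixed part of the approximating map is controlled by the unique reducible solution (because $b_{1}(X)=0$), yielding an explicit self-map of $S^{V_{X}^{G}}$ whose class in degree $1$ is ruled out by a direct dimension count. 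For proper subgroups $H<G$ the $(H)$-summand is the first stable homotopy group of a Borel-twisted Thom spectrum over $BW_{G}(H)_{+}$, and the key numerical input is $\dim V_{X}^{H}$, computable from $\ind_{\C}(D_{X})=-\sigma(X)/8$ together with the explicit $\Pin$-actions on $\quat$ and $\tilR$.

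The dichotomy in the statement is traced directly to this computation. The inequality $2\chi(X)+3\sigma(X)\ge 4$ ensures that $\dim V_{X}^{H}$ is large enough for each Borel-twisted Thom spectrum to be at least $2$-connected, which kills $\pi^{s}_{1}$ in that summand. The subgroup $\langle j\rangle\cong\Z/4\le\Pin$ is the borderline case: at its fixed set an additional copy of $\tilR$ is subtracted from $V_{X}$, so only the strict inequality $2\chi(X)+3\sigma(X)>4$ supplies the extra dimensional shift needed to kill this summand. For $G=\{e\}$ and $G=S^{1}$ the subgroup lattice is controlled by the cyclic subgroups of $S^{1}$, whose fixed sets in $\quat$ are already trivial, and the equality case $2\chi+3\sigma=4$ is sufficient; this accounts for parts (1) and (2).

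The hardest step will be the analysis at the $\langle j\rangle$-summand (and more generally at the finite binary dihedral subgroups of $\Pin$ and the cyclic subgroups of $S^{1}$): one must compute $V_{X}^{H}$ as a virtual $W_{G}(H)$-representation and verify, by a precise connectivity estimate for the resulting Borel-twisted Thom spectra, that $\pi^{s}_{1}$ indeed vanishes under the stated inequalities. Tracking why $2\chi+3\sigma=4$ suffices in the $\{e\}$- and $S^{1}$-equivariant settings while $\Pin$-equivariance requires strict inequality is the delicate bookkeeping, and is in fact \emph{forced} to be sharp by the existence of the element $\alpha_{0}$ of Theorem~\ref{theorem main: curious Pin(2)-element}, which is precisely what prevents the $\Pin$-equivariant vanishing from extending down to the borderline case $2\chi+3\sigma=4$ realised by $S^{4}$.
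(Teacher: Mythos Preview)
Your overall strategy---use the tom Dieck splitting of $\pi^{G}_{1}(S^{V_{X}})$ and kill each summand---is the same framework the paper uses, but there is a genuine error in your identification of the summands, and this error cascades into your explanation of the borderline case.

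The family Bauer--Furuta invariant lives in the stable category built from the \emph{incomplete} $\Pin$-universe $\mathcal{U}=\mathbb{H}^{\infty}\oplus\tilR^{\infty}\oplus\R^{\infty}$. In the version of the tom Dieck splitting valid here (the paper's Theorem~\ref{thm: tom Dieck}), the sum ranges only over conjugacy classes of subgroups arising as \emph{isotropy groups of points of $\mathcal{U}$}. For $G=\Pin$ these are exactly $\{e\}$, $S^{1}$, and $\Pin$; for $G=S^{1}$ they are $\{e\}$ and $S^{1}$. The subgroup $\langle j\rangle\cong\Z/4$, the finite binary dihedral subgroups, and the proper cyclic subgroups of $S^{1}$ \emph{do not occur}: no nonzero vector in $\mathbb{H}\oplus\tilR\oplus\R$ has such an isotropy group. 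So your ``hardest step''---the analysis at $\langle j\rangle$---simply does not arise, and your explanation of why the strict inequality is needed in the $\Pin$ case is wrong.

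What actually happens in the paper is this. The geometric input (Lemma~\ref{lem: FSW is of BF-type}) is not a dimension count at the reducible but the observation that on $S^{1}$-fixed points the approximated map factors through the null-homotopic composite $S^{2\R}\xrightarrow{c_{0}}S^{\R}\wedge S(2\R)_{+}\xrightarrow{1\wedge p_{2}}S^{\R}$; this shows $(-)^{S^{1}}$ of the invariant vanishes, which by Lemma~\ref{lem: computation of the kernel 1} kills the $S^{1}$- and $\Pin$-summands \emph{simultaneously} and places the invariant in the single $\{e\}$-summand $\pi^{s}_{1}\bigl((S^{\tilR}\wedge E\Pin_{+}\wedge S^{V_{X}})/\Pin\bigr)$. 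This is a Thom space over $B\Pin$ (whose $3$-skeleton is $\mathbb{RP}^{2}$), and the numerology $2\chi+3\sigma=4+4(n-4m)$ translates into the connectivity of that Thom space. When $n>4m$ or $m<0$ the group vanishes by connectivity (Propositions~\ref{prop: algebraic vanishing result1}--\ref{prop: algebraic vanishing result2}); the borderline $n=4m$ leaves exactly $\pi^{s}_{1}(\mathbb{RP}^{3})\cong\Z/2$ alive in the $\Pin$ case, and the surviving class (called $\alpha_{0}$) is shown to restrict trivially to $S^{1}$, yielding part~(1). Note also that for $m>0$ the paper does \emph{not} rely on connectivity alone: it passes through the cofiber sequence $S^{0}\to S^{m\mathbb{H}}\to S^{\R}\wedge S(m\mathbb{H})_{+}$ (Lemmas~\ref{lem: gamma is trivial when restriected to S1}--\ref{lem: q* injective}), a step your outline omits.
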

We note that for any spin 4-manifold, we have $ 4\mid 2\chi(X)+3\sigma(X)$. We also note that the quantity $2\chi(X)+3\sigma(X)$, which equals $c_{1}(TX)^{2}[X]$ when $X$ is equipped with an almost complex structure, plays an important role in the geography problem of irreducible and symplectic 4-manifolds (see, for example, \cite{FS2009}). Actually, all known examples of simply-connected, irreducible 4-manifolds satisfies the inequality
\begin{equation}\label{positive chern number}
 2\chi(X)+3\sigma(X)\geq 0.   
\end{equation}
 Moreover, by a result of Taubes \cite{Taubes2000}, the inequality (\ref{positive chern number}) holds for any irreducible symplectic 4-manifolds. In particular, this implies that Theorem \ref{thm: generalized vanishing theorem} applies to any simply-connected, spin, symplectic 4-manifold that is not homeomorphic to an elliptic surface $E(2k)$. And it also applies to many reducible 4-manifolds (e.g. $K3\#(S^{2}\times S^{2})$). We refers to Remark \ref{remark: nonvanishing FBF} for examples where $2\chi(X)+3\sigma(X)=-4, 0$ or $4$ but $\FBF^{\Pin}(X,f,\widetilde{f})\neq 0$.  
 
 Theorem \ref{thm: generalized vanishing theorem} is proved using tools from equivariant stable homotopy theory (in particular, the tom Dieck splitting theorem and the Adams' isomorphism). The geometric input from gauge theory is the following observation: When restricted to the $S^{1}$-fixed points, the family Bauer--Furuta invariant can be defined via the linear operator $d^{+}\oplus d^{*}$, which only records the induced map on $H^{2}(X;\mathbb{R})$ and hence can not distinguish $f$ with the identity map on $X$. As a result, the family Bauer--Furuta invariant becomes trivial once we restrict it to the $S^{1}$-fixed points. In a certain range of dimensions, this is enough to show that the family Bauer--Furuta invariant itself is trivial even before passing to the $S^{1}$-fixed points. So we can prove Theorem \ref{thm: generalized vanishing theorem}.

 As a final remark, we mention that Theorem \ref{thm: generalized vanishing theorem} doesn't say anything about those $\Spinc$ structures not coming from a spin structure. Actually, the family Bauer--Furuta invariants and the family Seiberg--Witten invariants for these $\Spinc$ structures do detect exotic diffeomorphisms (see \cite{BK20,rubermanexotic}). Note that when defining the family Bauer--Furuta invariant in the absence of a spin structure, there are some extra complications in choosing the framing of the moduli space (see \cite[Section 3]{KM2020}).

\textbf{Organization of the paper:} In Section~\ref{section: definition of FBF}, we recall the definition of the $\Pin$-equivariant family Bauer--Furuta invariant. Then in Section~\ref{proof of exoticness}, we prove Theorem \ref{surface} (existence of exotic surfaces under a stabilization) by assuming that Theorem \ref{thm: generalized vanishing theorem} (the vanishing theorem for the family Bauer--Furuta invariant) holds for $S^{2}\times S^{2}$. We also prove Proposition \ref{upper bound on stabilizations} in this section. The rest of the paper is devoted to the proof of the vanishing results Theorem \ref{theorem main: vanishing result for loops},  Theorem \ref{theorem main: curious Pin(2)-element} and Theorem \ref{thm: generalized vanishing theorem}. In Section~\ref{section homotopy theory}, we first review some standard constructions and results in equivariant stable homotopy theory. Then we prove several algebraic vanishing results for the $\Pin$-equivariant stable homotopy groups (Proposition \ref{prop: algebraic vanishing result1} $\sim$ Proposition \ref{prop: algebraic vanishing result4}). These vanishing results will be used in Section~\ref{section: properties of FBF} to finish the proof of Theorem \ref{theorem main: vanishing result for loops}, Theorem \ref{theorem main: curious Pin(2)-element} and Theorem \ref{thm: generalized vanishing theorem}. Section~\ref{section homotopy theory} and \ref{section: properties of FBF} mainly consist of algebraic topology.\\ 



\textbf{Acknowledgments:} The authors are extremely thankful to John Etnyre and Danny Ruberman for having conversations \& email exchanges and providing some key insights related to this work. The first  author wishes to thank Mike Hill and XiaoLin Danny Shi for pointing out the references \cite{blumberg2021equivariant,Lewis2000}. The second author is especially grateful to Nobuo Iida, Hokuto Konno and  Masaki Taniguchi for their priceless friendship and being patience while teaching him Bauer--Furuta theory and having many discussions related to this work. We are  thankful to Kyle Hayden, Hyunki Min, Lisa Piccirillo and Ian Zemke for showing great interest in this project. We are also thankful to Selman Akbulut, Sander Kupers and Markus Szymik for giving helpful comments on an earlier version of the paper. The second author is partially supported by the NSF grant DMS-1906414.

\section{The Family Bauer--Furuta Invariant.}\label{section: definition of FBF}
 Our proof of Theorem \ref{surface} heavily uses the family Bauer--Furuta invariant, which was defined by Bauer--Furuta \cite{BauerFuruta1} (for a single 4-manifold), Szymik \cite{Szymik2010} and Xu \cite{Xu2004} (for families). So for completeness, we first recall its definition following the expositions in \cite{KM2020, lin}.  
We start by introducing some notations. Consider the 1-dimensional Lie group 
$$
\Pin=\{e^{i\theta}\}\cup \{j\cdot e^{i\theta}\}\subset \mathbb{H}.
$$
We are mainly interested in the following three representations of $\Pin$:
\begin{itemize}
    \item $\mathbb{H}$: the 4-dimensional representation, where $\Pin$ acts as the left multiplication. 
    \item $\widetilde{\mathbb{R}}$: the 1-dimensional representation, where the unique component $S^{1}\subset \Pin$ acts trivially and the other component acts as multiplication by $-1$. Note that this is also the adjoint representation of $\Pin$ on its Lie algebra.
    \item $\mathbb{R}$: the 1-dimensional trivial representation.
\end{itemize}
For a finite dimensional $\Pin$-representation $V$, we use $S^{V}$ to denote the one point compactification of $V$ and use  $S(V)$ to denote the unit sphere in $V$. The space $S^{V}$ is called the ``representation sphere'' and it admits a $\Pin$-action that fixes the point $\infty$, which we choose as the base point. 
\subsection{The invariant of a single diffeomorphism}
Now we start the construction of the family Bauer--Furuta invariant of a self-diffeomorphism. Let $X$ be an oriented 4-manifold with $b_{1}(X)=0$. We equip $X$ with a cohomological orientation (i.e. an orientation of $H^{2}_{+}(X;\mathbb{R})$) and a spin structure. We use $P$ to denote the corresponding $\mathrm{Spin}(4)$-bundle. Let $f$ be a self-diffeomorphism on $X$ such that $f^{*}$ acts trivially on $H^{*}(X;\mathbb{R})$. Then we form the mapping torus 
$$
Tf=([0,1]\times X)/(0,x)\sim (1,f(x))
$$
and treat it as a fiber bundle over $S^{1}$ with fiber $X$. Next, let $\widetilde{f}:P\rightarrow P$ be a bundle automorphism of $P$ which lifts $f$. (There are two choices of $\widetilde{f}$ and we pick one.) Then we form the $\mathrm{Spin}(4)$-bundle $$([0,1]\times P)/(0,x)\sim (1,\widetilde{f}(x)),$$ which gives a family spin structure on $Tf$. This family spin structure allows us to define the family Seiberg--Witten equations on $Tf$. By doing the \textit{finite dimensional approximation} on these equations, we obtain a $\Pin$-equivariant map 
\begin{equation}\label{eq: sw}
\widetilde{sw}: U\rightarrow V
\end{equation}
, where $V$ is finite dimensional representation space of $\Pin$ and $U$ is a fiber bundle over $S^{1}$ whose fibers are finite dimensional representations spaces of $\Pin$. Moreover, because of the compactness property of the Seiberg--Witten moduli space, by choosing $U,V$ large enough, one can ensure that $\widetilde{sw}$ satisfies the following nice property: There exists a large $R$ and a small $\epsilon$, such that 
$$
\widetilde{sw}(S_{R}(U))\cap B_{\epsilon}(V)=\emptyset.
$$
Here $S_{R}(U)\subset U$ denotes the sphere bundle with radius $R$ and $B_{\epsilon}(V)\subset V$ denotes the  disk with radius $\epsilon$. As a consequence, we get an induced map 
$$
\widetilde{sw}^{+}:U^{+}\cong B_{R}(U)/S_{R}(U)\rightarrow V/(V\setminus \operatorname{int}(B_{\epsilon}(V)))\cong V^{+}
$$
between one-point compactifications.
Furthermore, as explained in \cite{KM2020,lin}, by using the homology orientation on $X$ and the $\Pin$-actions on $U,V$, one can choose  canonical (up to homotopy) identifications 
\begin{equation}\label{eq: trivialization}
U\cong S^{1}\times (\widetilde{\mathbb{R}}^{n'}\oplus \mathbb{H}^{m'-\frac{\sigma(X)}{16}}),\quad V\cong \widetilde{\mathbb{R}}^{n'+b^{+}_{2}(X)}\oplus \mathbb{H}^{m'}
\end{equation}
for some $m',n'\gg 0$. Using these trivilizations, we can rewrite  $\widetilde{sw}^{+}$ as a map
\begin{equation}\label{eq: SW+}
\widetilde{sw}^{+}: S^{1}_{+}\wedge   S^{(m'-\frac{\sigma(X)}{16})\mathbb{H}+  n'\widetilde{\mathbb{R}}}\rightarrow 
S^{m'\mathbb{H}+ (n'+b^{+}_{2}(X))\widetilde{\mathbb{R}}}.
\end{equation}
Here $S^{1}_{+}$ denotes union of $S^{1}$ with a disjoint base point. To obtain a map between two spheres, we embed $S^{1}$ as the unit sphere $S(2\mathbb{R})$ in $\mathbb{R}^{2}$ and let 
$
c_{0}: S^{2\mathbb{R}}\rightarrow S^{\mathbb{R}}\wedge S(2\mathbb{R})_{+}
$ be the Pontragin-Thom collapsing map which collapses all the points outside a tubular 
neighborhood of $S(2\mathbb{R})$. Consider the composition 
\begin{equation}\label{eq: BF composition}
\begin{split}
S^{2\mathbb{R}}\wedge S^{(m'-\frac{\sigma(X)}{16})\mathbb{H}+n'\widetilde{\mathbb{R}}}\xrightarrow{c_{0}\wedge \id_{S^{(m'-\frac{\sigma(X)}{16})\mathbb{H}+n'\widetilde{\mathbb{R}}}}} S^{\mathbb{R}}&\wedge (S(2\mathbb{R})_{+})\wedge S^{(m'-\frac{\sigma(X)}{16})\mathbb{H}+n'\widetilde{\mathbb{R}}}\\ &\xrightarrow{\id_{S^{\mathbb{R}}}\wedge \widetilde{sw}^{+}} S^{\mathbb{R}}\wedge S^{m'\mathbb{H}+(n'+b^{+}_{2}(X))\widetilde{\mathbb{R}}}.
\end{split}    
\end{equation}
 It is a $\Pin$-equivariant map between two spheres and thus it defines an element in the $\Pin$-equivariant stable homotopy of spheres. This element is exactly the $\Pin$-equivariant family Bauer--Furuta invariant of $f$, denoted by $\FBF^{\Pin}(X,f,\widetilde{f})$. More precisely, we have $$\FBF^{\Pin}(X,f,\widetilde{f})\in\begin{cases}\{S^{\mathbb{R}+(-\frac{\sigma(X)}{16})\mathbb{H}},S^{b^{+}(X)\widetilde{\mathbb{R}}}\}^{\Pin}_{\mathcal{U}}& \mbox {if }\sigma(X)\leq 0\\
\{S^{\mathbb{R}},S^{b^{+}(X)\widetilde{\mathbb{R}}+\frac{\sigma(X)}{16}\mathbb{H}}\}^{\Pin}_{\mathcal{U}}& \mbox {if }\sigma(X)> 0
\end{cases}.$$
(See Definition \ref{def: equivariant stable homotopy group} for a precise description of these groups.) 

The $S^{1}$-equivariant Bauer--Furuta invariant $\FBF^{S^{1}}(X,f,\widetilde{f})$ and the non-equivariant Bauer--Furuta invariant $\FBF^{\{e\}}(X,f,\widetilde{f})$ are defined to be the restriction of $\FBF^{\Pin}(X,f,\widetilde{f})$ to the subgroups $S^{1},\{e\}\subset \Pin$ respectively. Therefore, we have 
$$
\FBF^{\Pin}(X,f,\widetilde{f})=0\implies \FBF^{S^{1}}(X,f,\widetilde{f})=0\implies \FBF^{\{e\}}(X,f,\widetilde{f})=0.
$$

All these invariants are unchanged under smooth isotopy. In particular, when $f$ is smoothly isotopic to $\id_{X}$ and $\widetilde{f}$ is the lift that corresponding to the identity map on the spin bundle, all of them are zero. To get an invariant of $f$, we can consider the unordered pair
$$
\{\FBF^{\Pin}(X,f,\widetilde{f}_{1}),\FBF^{\Pin}(X,f,\widetilde{f}_{2})\}
$$
for two possible lifts $\widetilde{f}_{1},\widetilde{f}_{2}$. If $f$ is smoothly isotopic to $\id_{X}$, then at least one element in this pair must be vanishing.
\begin{remark}
We briefly explain the motivation of the composition (\ref{eq: BF composition}). Since $S^{1}_{+}$ is stably homotopy equivalent to $S^{1}\vee S^{0}$, the map $\widetilde{sw}^{+}$ admits a splitting into $\widetilde{sw}^{+}_{0}\vee \widetilde{sw}_{1}^{+}$ in the stable homotopy category. The component $\widetilde{sw}^{+}_{0}$ represent the Bauer--Furuta invariant of the fiber $X$ and the component $\widetilde{sw}^{+}_{1}$ captures information about $f$. Since we are studying exotic diffeomorphisms on a fixed smooth manifold $X$, our main interest is $\widetilde{sw}_{1}^{+}$. The effect of composing with a suspension of $c_{0}$ in (\ref{eq: BF composition}) is exactly keeping $\widetilde{sw}_{1}^{+}$ and ignoring $\widetilde{sw}_{0}^{+}$. This also explains why the family Bauer--Furuta invariant can be trivial even when the Bauer--Furuta invariant of the fiber is nontrivial.  
\end{remark}
\subsection{The invariant of a loop of diffeomorphisms} Let $X$ be as before. Now we construct  the family Bauer--Furuta invariant of  a loop of diffeomorphisms on $X$. Let $\gamma: S^{1}\rightarrow \operatorname{Diff}(X)$ be a loop  based at $\operatorname{id}_{X}$. We use $\gamma$ as a clutching function and form a smooth fiber bundle 
$$
X\hookrightarrow T\gamma\rightarrow S^{2}.
$$
\begin{definition}\label{defi: spin-liftable}
$\gamma$ is \emph{spin-liftable} if the second Stiefel--Whitney class $w_{2}(T\gamma)$ vanishes.
\end{definition}

 Note that this condition is always satisfied if $\gamma$ is null-homotopic in the homeomorphism group of $X$. Because in this case, $T\gamma$ is homeomorphic to $X\times S^{2}$. By Wu's formula, the Seiefel--Whitney class is independent of the smooth structure, we have $w_{2}(T\gamma)=w_{2}(X\times S^{2})=0$.   

From now on, we assume $\gamma$ is spin-liftable. Then $T\gamma$ carries a family spin structure. Since the inclusion $X\hookrightarrow T\gamma$ induces an isomorphism
on $H^{1}(-;\mathbb{Z}/2)$, such a family spin structure is unique if we require that its restriction the fiber is the given spin structure on $X$. (This is different from the previous case of a single diffeomorphism.) 

Now we repeat the same construction as before and get a map 
\begin{equation}\label{eq: SW+ for loop}
\widetilde{sw}^{+}: S^{2}_{+}\wedge   S^{(m'-\frac{\sigma(X)}{16})\mathbb{H}+  n'\widetilde{\mathbb{R}}}\rightarrow 
S^{m'\mathbb{H}+ (n'+b^{+}_{2}(X))\widetilde{\mathbb{R}}}.
\end{equation}
for some $m',n'\gg 0$. We embed $S^{2}$ as the unit sphere in $S^{3\mathbb{R}}$ and form the composition between (suspensions of) $\widetilde{sw}^{+}$ and the Pontryagin-Thom collapsing map $S^{3\mathbb{R}}\rightarrow S^{\mathbb{R}}\wedge S(3\mathbb{R})_{+}$. This gives us a map 
$$
S^{3\mathbb{R}+(m'-\frac{\sigma(X)}{16})\mathbb{H}+  n'\widetilde{\mathbb{R}}}\rightarrow S^{\mathbb{R}+ m'\mathbb{H}+ (n'+b^{+}_{2}(X))\widetilde{\mathbb{R}}}.
$$
The $\Pin$-equivariant family Bauer--Furuta  invariant $
\FBF^{\Pin}(X,\gamma)$ is defined as the equivariant  stable  homotopy class of  this map. More  precisely, we have 
$$\FBF^{\Pin}(X,\gamma)\in\begin{cases}\{S^{2\mathbb{R}+(-\frac{\sigma(X)}{16})\mathbb{H}},S^{b^{+}(X)\widetilde{\mathbb{R}}}\}^{\Pin}_{\mathcal{U}}& \mbox {if }\sigma(X)\leq 0\\
\{S^{2\mathbb{R}},S^{b^{+}(X)\widetilde{\mathbb{R}}+\frac{\sigma(X)}{16}\mathbb{H}}\}^{\Pin}_{\mathcal{U}}& \mbox {if }\sigma(X)> 0
\end{cases}.$$
The invariants $\FBF^{S^{1}}(X,\gamma)$ and $\FBF^{\{e\}}(X,\gamma)$ are defined by restricting $\FBF^{\Pin}(X,\gamma)$ to the subgroups $\{e\},S^{1}$ respectively. Again, these invariants are unchanged under any smooth isotopies.
\begin{remark}
As noted in \cite{KM2020}, there are extra complications if one tries to generalize this construction and define invariants of elements in $\pi_{k}(\operatorname{Diff}(X))$ with $k\geq 2$. Because the symplectic group $\operatorname{Sp}(n)$ is 2-connected, every quaternion bundle over $S^{k+1}$ has a trivialization which is unique up to homotopy when $k\leq 1$. This allows us to define the canonical trivilizations of the bundles involved in construction (see (\ref{eq: trivialization})). However, this argument breaks down when $k\geq 2$.
\end{remark}
\section{Exotic Surfaces and Stabilization}\label{proof of exoticness}
\subsection{Exotic surfaces that survive a stabilization}
We will now prove Theorem \ref{surface}, which shows the existence of exotic surfaces that remains exotic after a single stabilization. The proof here relies on Theorem \ref{thm: generalized vanishing theorem} in the case of $S^{2}\times S^{2}$. Theorem \ref{thm: generalized vanishing theorem} will be proved in Section \ref{section: properties of FBF} in full generality. 

We start with the following technical lemma.

\begin{lemma}\label{lem: identity on neighborhood}
Let $D=\bigsqcup D^{2}\hookrightarrow X$ be a smooth, proper embedding of a finite disjoint union of disks into a 4-manifold $X$ with boundary. Consider a self-diffeomorphism $f:X\rightarrow X$ that fixes all points in a neighborhood of $\partial X$ and all points on $D$. Then $f$ is smoothly isotopic (relative to $\partial X$) to some $f'$ which fixes all points in a neighborhood of $(\partial X)\cup D$.
\end{lemma}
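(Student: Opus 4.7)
The plan is a two-stage straightening: first linearize $f$ along $D$ in the normal direction, then use the contractibility of each disk component of $D$ to trivialize the resulting normal-bundle automorphism.

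First, I would fix a tubular neighborhood $\nu$ of $D$ in $X$ that meets $\partial X$ in a tubular neighborhood of $\partial D \subset \partial X$ contained in the open set on which $f$ is already the identity. Since each component of $D$ is a $2$-disk, the normal bundle $N_{D} \to D$ is trivial, so I fix a diffeomorphism $\nu \cong D \times \R^{2}$ sending $D$ to $D \times \{0\}$. After shrinking, $f|_{\nu}$ is described by $f(x,v) = (g(x,v), h(x,v))$ with $g(x,0) = x$, $h(x,0) = 0$, and with $f = \mathrm{id}$ on a neighborhood of $\partial X \cap \nu$.

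Next, I would apply the standard ``linearization along a fixed submanifold'' trick. For $t \in (0,1]$, set $f_{t}(x,v) := t^{-1} f(x, tv)$ in the normal coordinate $v$; this extends smoothly to $t=0$ as the fiberwise linear map $(x,v) \mapsto (x, A(x) v)$, where $A : D \to \mathrm{GL}(\R^{2})$ is the normal derivative of $f$ along $D$. Patching $\{f_{t}\}$ with $f$ outside $\nu$ via a bump function supported in $\nu$ and disjoint from the neighborhood of $\partial X$ produces a smooth isotopy, rel $\partial X$, from $f$ to some $f_{1}$ which on a smaller tubular neighborhood $\nu'' \subset \nu$ has the form $(x,v) \mapsto (x, A(x)v)$. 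Since $f$ is the identity near $\partial X$, $A$ is the identity on a neighborhood of $\partial D$.

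Finally, I treat one disk component at a time. Since $D^{2}$ is contractible with boundary $S^{1}$, smooth maps $D^{2} \to \mathrm{GL}(\R^{2})$ that agree with the identity on a neighborhood of $\partial D^{2}$, considered up to isotopy rel this neighborhood, are classified by $\pi_{2}(\mathrm{GL}(\R^{2})) = \pi_{2}(O(2)) = 0$. Hence $A$ is smoothly isotopic rel a neighborhood of $\partial D$ to the constant identity; promoting this to an isotopy of fiberwise linear maps on $\nu''$ and again patching with $f_{1}$ outside via a bump function gives an isotopy $f_{1} \simeq f'$, rel $\partial X$, with $f' = \mathrm{id}$ on a neighborhood of $D$. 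The concatenation of the two isotopies is the desired one. The main technical issue is arranging that all the cut-offs and patchings yield genuine diffeomorphisms: if the tubular neighborhoods are chosen sufficiently small, the patched maps are $C^{1}$-small perturbations of their linear models and are automatically diffeomorphic, but the construction must be carried out so that nothing moves near $\partial X$. These are routine but finicky verifications and constitute the only real technical content of the proof.
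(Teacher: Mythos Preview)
Your proposal is correct and follows the same two-step strategy as the paper: first linearize $f$ along $D$ via the conjugation-by-dilation trick $f_t = \phi_t^{-1}\circ f\circ \phi_t$, then kill the resulting fiberwise-linear automorphism using $\pi_2(\mathrm{GL}^+(\R^2))=\pi_2(S^1)=0$. The only substantive difference is in how the local isotopy near $D$ is promoted to a global isotopy of $X$: you patch with bump functions and flag the ``finicky verifications'' needed to ensure the patched maps remain diffeomorphisms, whereas the paper sidesteps this entirely by constructing the two-stage isotopy as an isotopy of \emph{embeddings} $\nu_\epsilon(D)\hookrightarrow \nu(D)$ and then invoking the isotopy extension theorem once at the end. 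The paper's route is cleaner precisely because it delegates the global-diffeomorphism bookkeeping to that standard theorem; you may wish to do the same and drop the bump-function discussion.
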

\begin{proof}
Let $N$ be the normal bundle of $D$ in $X$. By choosing a trivialization of this bundle, we get an identification $N=D\times \mathbb{R}^{2}$. 
Since $f$ fixes $D$ pointwisely, its differential induces a bundle automorphism $l': N\rightarrow N$. Let $\rho: N\rightarrow \nu(D)$ be a diffeomorphism to a tubular neighborhood of $D$. We use $l$ to denote the composition  $$\rho\circ l'\circ \rho^{-1}:\nu(D)\rightarrow \nu(D).$$For $\epsilon>0 $, we use $\nu_{\epsilon}(D)$ to denote $\rho(D\times D_{\epsilon})$, where $D_{\epsilon}\subset \mathbb{R}^{2}$ is the the ball centered at the origin with radius $\epsilon$. We pick $\epsilon$ small enough such that $f(\nu_{\epsilon}(D))\subset \nu(D)$. 
We first construct a smooth isotopy $h_{1}:[0,1]\times \nu_{\epsilon}(D)\rightarrow \nu(D)$ between the maps
$$f\mid_{\nu_{\epsilon}(D)}: \nu_{\epsilon}(D)\rightarrow \nu(D)
\quad \text{and}\quad l\mid_{\nu_{\epsilon}(D)}: \nu_{\epsilon}(D)\rightarrow \nu(D).
$$
(Isotopy here means that for any $t$, the map $h_{1}(t,-)$ is a diffeomorphism onto its image.) This is done via the standard rescaling trick: For any $t>0$, we use $\phi_{t}:\nu(D)\rightarrow \nu(D)$ to denote the self-diffeomorphism defined by $$\phi_{t}(\rho(x,\vec{v}))=\rho(x,t\vec{v})\quad \forall x\in D, \vec{v}\in \mathbb{R}^{2}.$$ 
Then the isotopy $h_{1}$ is given by 
$$
h_{1}(t,y):=\begin{cases} \phi^{-1}_{t}\circ f\circ \phi_{t}(t,y)&\mbox{if } t\in (0,1]\\
l(y)&\mbox{if } t=0
\end{cases}.
$$
Next, we construct an smooth isotopy $h_{2}: [0,1]\times \nu_{\epsilon}(D)\rightarrow \nu(D)$ from the map $l\mid_{\nu_{\epsilon}(D)}$ to the inclusion map $i: \nu_{\epsilon}(D)\rightarrow \nu(D)$. This is done via a simple obstruction argument: The map $l'$ is linear on each fiber so it has the form 
$$
l'(x,\vec{v})=(x,\tau(x)\cdot \vec{v})
$$
for some $\tau: D\rightarrow \mathrm{GL}^{+}(\mathbb{R}^{2})$ that equals $1$ near $\partial D$. Since $\mathrm{GL}^{+}(\mathbb{R}^{2})$ is homotopy equivalent to $S^{1}$ and $\pi_{2}(S^{1})=0$, there exists a smooth homotopy (relative to $\partial D$)
$$
\widetilde{\tau}: [0,1]\times D\rightarrow \mathrm{GL}^{+}(\mathbb{R}^{2})
$$
from $\tau$ to the constant map $1$. we define $\widetilde{l}': [0,1]\times N\rightarrow N$ by
$$
\widetilde{l}'(t,x,\vec{v})=(x,\widetilde{\tau}(t,x)\cdot \vec{v})\quad\text{ for }t\in [0,1], x\in D, \vec{v}\in \mathbb{R}^{2}.
$$
Then we define 
$
h_{2}(t,y):=\rho(\widetilde{l}'(t,\rho^{-1}(y))).
$ By compositing $h_{1}$ with $h_{2}$, we obtain a smooth isotopy between the maps $$f\mid_{\nu_{\epsilon}(D)}: \nu_{\epsilon}(D)\rightarrow \nu(D)
\quad \text{and}\quad  i:\nu_{\epsilon}(D)\rightarrow \nu(D).$$ Furthermore, this isotopy is trivial near $\partial X$. Therefore, we can apply the isotopy extension theorem to produce an isotopy of $f$ to some $f'$ that fixed all points in a neighborhood of $(\partial X)\cup D$.
\end{proof}  
 Next, we briefly recall the construction of the Dehn twist on the punctured $K3$. We pick an essential loop 
\begin{equation}\label{loop}
\gamma:[0,1]\rightarrow  SO(4)
\end{equation}
such that $\gamma(t)=1$ for $t$ near $0, 1$. Then we define the  self-diffeomorphism 
$$
\widetilde{\gamma}: [0,1]\times S^{3}\rightarrow [0,1]\times S^{3}
$$
by $\widetilde{\gamma}(t,x)=(t,\gamma(t)x)$. Let $K3^{\circ}$ be the punctured $K3$. We fix a diffeomorphism from $[0,1]\times S^{3}$ to a collar neighborhood of  $K3^{\circ}$ and define the Dehn twist as follows
\begin{equation}\label{dehn twist}
\delta_{K3^{\circ}}:= \widetilde{\gamma}\cup \operatorname{id}_{K3^{\circ}\setminus ([0,1]\times S^{3})}: K3^{\circ}\rightarrow  K3^{\circ}.\end{equation}

Now we are ready to show the existence of exotic surfaces that survive even after a single stabilization.\\
\begin{proof}[Proof of Theorem~\ref{surface}]

Recall that one can construct $K3^{\circ}$ by starting with $S^3\times I$ and then adding a bunch of 2-handles along $S^3\times \{1\}$ and cap it off with a 4-ball. Let $L$ be the set of attaching spheres on $S^3$. Then $L$ is a link with $22$ components. Let $D_{L}$ be the union
$$
(L\times I)\cup_{L\times \{1\}}\{\text{cores of the two handles}\}. 
$$ Then $D_{L}$ is a slice disk bounded by $L\subset \partial (K3^{\circ})$. 

Consider the Dehn-twist $\delta_{K3^{\circ}}$ defined in (\ref{dehn twist}). In \cite{KM2020}, Kronheimer--Mrowka showed that $\delta_{K3^{\circ}}$ is not smoothly isotopic to $\id_{K3^{\circ}}$ relative to the boundary.  Let $D'_L= \delta_{K3^{\circ}}(D_L)$, then $D'_{L}$ and  $D_L$ are topologically isotopic relative to the boundary, since the result of Krannich--Kupers \cite[Remark 6]{krannich2021torelli} and Orson--Powell \cite{OP} implies $\delta_{K3^{\circ}}$ is indeed topologically isotopic to $\id_{K3^{\circ}}$ relative to the boundary. Also it is obvious now that their complements are diffeomorphic.  

The claim is that $D_L$ and $D'_L$ are not smoothly isotopic even after a single stabilization. Suppose this were not true. Denote $\delta \# \id_{S^2\times S^2}$ by $\delta_{K3^{\circ}\#(S^2\times S^2)}$. By the isotopy extension theorem, there exists a self-diffeomorphism $g_{1}: K3^{\circ }\#(S^{2}\times S^{2})\rightarrow K3^{\circ }\#(S^{2}\times S^{2})$ that is smoothly isotopic to the identity relative to the boundary and sends $D_{L}$ to $D'_{L}$. Then $g_{1}\circ \delta_{K3^{\circ}\#(S^2\times S^2)}$ fixes all points on $D_{L}$ and is smoothly isotopic to $\delta_{K3^{\circ}\#(S^2\times S^2)}$ relative to the boundary. By Lemma \ref{lem: identity on neighborhood}, we can further isotope $g_{1}\circ \delta_{K3^{\circ}\#(S^2\times S^2)}$ relative to the boundary to a diffeomorphism $$g_{2}:K3^{\circ}\#(S^2\times S^2)\rightarrow K3^{\circ}\#(S^2\times S^2)$$ that is identity on the neighbourhood of $D_L \cup S^3\times \{0\}$. Notice that the complement of this set in $K3^{\circ}\#(S^{2}\times S^{2})$ is a punctured $S^2\times S^2$. Thus $g_{2}$ can be decompose as $\id_{K3^{\circ}}\# f$ for some $f \in \Diff(S^2\times S^2)$ which acts trivially on the cohomology. Furthermore, let $B$ be the small disk in $S^{2}\times S^{2}$ that we apply the connected sum construction. Then $f$ fixes $B$ pointwisely. 

So we have shown that $\delta_{K3^{\circ}\#S^{2}\times S^{2}}$ is smoothly isotopic to $\id_{K3^{\circ}}\# f$.
This further implies that the Dehn twist  
$$
\delta_{K3\# K3\# (S^{2}\times S^{2})}:K3\# K3\# (S^{2}\times S^{2})\rightarrow K3\# K3\# (S^{2}\times S^{2}) 
$$
along the neck between the two $K3's$ is smoothly  isotopic to $(\id_{K3\#K3})\#f$ .

Let $\id_{\s_{K3\# K3}}$ be the identity map on the spin bundle over $K3\# K3$. There are two lifts of $f$ to the spin bundle over $S^{2}\times S^{2}$. We set $\widetilde{f}$ to be the one that is identity on all the fibers over $B$. Then we can form the connected sum $\id_{\s_{K3\# K3}}\# \widetilde{f}$, which is a lift of $\id_{K3\#K3}\#f$ to an automorphism on the spin bundle. 

It follows the connected sum formula \cite[Proposition 2.26]{lin} that \begin{align*}
\FBF^{\Pin}(K3\#K3\#S^2\times S^2,\id_{K3\#K3}\#f, \id_{\s_{K3\# K3}}\#\widetilde{f})\\
= BF^{\Pin}(K3\#K3) \wedge \FBF^{\Pin}(S^2 \times S^2,f, \widetilde{f}). 
\end{align*}
But Theorem~\ref{thm: generalized vanishing theorem} implies that  $\FBF^{\Pin}(S^2 \times S^2,f, \widetilde{f})=0$, and hence 
\[
\FBF^{\Pin}(K3\#K3\#S^2\times S^2,\id_{K3\#K3}\#f, \id_{\s_{K3\#K3}}\#\widetilde{f})=0.
\]
But in \cite[Proof of Theorem~1.2]{lin}, the first author proves that
\[
\FBF^{\Pin}(K3\#K3\#S^2\times S^2,\delta_{K3\# K3\# (S^{2}\times S^{2})},
\widetilde{\delta}) \neq 0
\]
for any lift $\widetilde{\delta}$ of $\delta_{K3\# K3\# (S^{2}\times S^{2})}$ to the spin structure of $K3\#K3\#S^2\times S^2$. \footnote{There is a subtle difference between the setting of the current paper and \cite{lin}. Here we are using an incomplete universe $\mathbb{H}^{\infty}\oplus \widetilde{\mathbb{R}}^{\infty}\oplus \mathbb{R}^{\infty}$ to set up an equivariant stable homotopy category, while in \cite{lin}, a complete universe is used. (See Section \ref{section homotopy theory} for further explanations.) This difference does not cause a problem because by definition, $\FBF^{\Pin}(K3\#K3\#S^2\times S^2,\delta_{K3\# K3\# (S^{2}\times S^{2})},
\widetilde{\delta})\neq 0$ in a complete universe implies the same result in any universe.} Since $\delta_{K3\# K3\# (S^{2}\times S^{2})}$ is smoothly isotopic to $\id_{K3\#K3}\#f$, this contracts the isotopy invariance of the family Bauer--Furuta invariant. Thus $D_L$ and $D'_L$ are not smoothly isotopic relative to the boundary in $K3^{\circ}\# S^2\times S^2.$
\end{proof}
\subsection{An upper bound on the number of stabilizations} Now we move on to the proof of Proposition \ref{upper bound on stabilizations}, which gives an upper bound on the stabilizations needed to make $D_L$ and $D'_L$ smoothly isotopic. We start with part (1).
\begin{proof}[Proof of Proposition \ref{upper bound on stabilizations} (1)] 
Since $L_{0}$ is an unlink, there is a diffeomorphism $f:S^{3}\rightarrow S^{3}$ that is smoothly isotopic to $\id_{S^{3}}$ and sends $L_{0}$ to the standard unlink 
$$
L'_{0}=\{(\cos \theta,\sin\theta, i)\mid \theta\in [0,2\pi], i=1,\cdots m\}\subset \mathbb{R}^{3}\cup \{\infty\}=S^{3}.
$$
By taking the rotations along the $z$-axis, we can construct an essential loop $\gamma':[0,1]\rightarrow SO(4)$ such that $\gamma'(t)$ fixes $L'_{0}$ as a subset for any $t$. Then we consider the self-diffeomorphism
$$
\widetilde{\gamma}': [0,1]\times S^{3}\rightarrow [0,1]\times S^{3}
$$
defined by 
$$
\widetilde{\gamma}'(t,x):=(t,f^{-1}\circ \gamma'(t)\circ f(x)).
$$
By extending $\tilde{\gamma}'$ with the identity map on $K3^{\circ}\setminus ([0,1]\times S^{3})$, we obtain a self-diffeomorphism $\delta'_{K3^{\circ}}$ on $K3^{\circ}$. Since $\gamma'$ is homotopic to the loop $\gamma$ in (\ref{loop}), and $f$ is smoothly isotopic to $\id_{S^{3}}$, we see that $\delta'_{K3^{\circ}}$ is smoothly isotopic to $\delta_{K3^{\circ}}$ relative to the boundary. Therefore, $\delta'_{K3^{\circ}}(D_{L_{0}})$ and $D'_{L_{0}}$ are smoothly isotopic relative to the boundary. Furthermore, since $f^{-1}\circ \gamma'(t)\circ f$ fixes $L_0$ as a subset, the embedded surfaces $\delta'_{K3^{\circ}}(D_{L_0})$ and $D_{L_0}$ differ from each other by a Dehn twist on the domain $\bigsqcup_{m} D^{2}$. Since this Dehn twist is smoothly isotopic the identity map relative to  the boundary, the surfaces $D_{L_0}$ and $\delta'_{K3^{\circ}}(D_{L_0})$ are also smoothly isotopic relative to the boundary.  
\end{proof}
Figure \ref{K3}  is a Kirby diagram of $K3$ \cite[Page 305]{GompfStipsicz}. The attaching link has $21$ unknotted components and a single knotted component (which is a trefoil). By doing a handle slide of this knotted component along one of the unknoted components, we can change the middle crossing and make it into the unknot. This gives an example of $L$ whose all components are unknots. By Proposition \ref{upper bound on stabilizations} (1), all components of $D_{L}$ are smoothly isotopic to the corresponding components of $D'_{L}$ relative to the boundary. 

\begin{figure}[htbp]
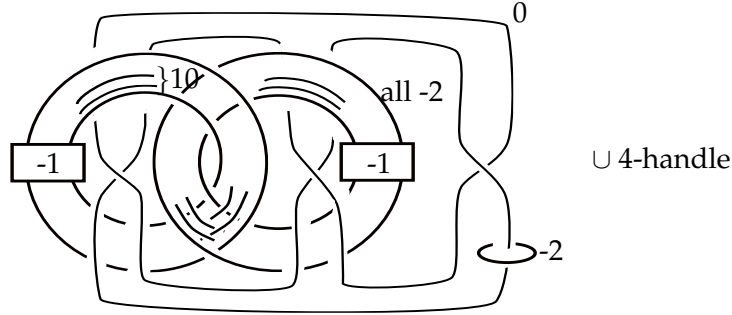

\begin{center}
    \begin{overpic}[width=7cm, height=4cm]{Puncture_K3.pdf}
    \put(10,53){-1}
    \put(135,53){-1}
    \put(140,80){all -2}
    \put(190,110){0}
    \put(200,20){-2}
    \put(55,85){$\}$10}
    \put(220,55){$\cup$ 4-handle}
    \end{overpic}
    \caption{A Kirby Diagram of K3.}\label{K3}
\end{center}

\end{figure}

Now we move on to  the proof of Proposition \ref{upper bound on stabilizations} (2). We start by the following lemma.
\begin{lemma}\label{lem: homology after removing the tubular neighborhood} Given a sublink $L'$ of $L$, let $\nu(D_{L'})$ be an open tubular neighborhood of $D_{L'}$ in $K3^{\circ}$. Then we have the following results. 
\begin{enumerate}
    \item [(i)] $K3^{\circ}\setminus \nu(D_{L'})$ and $K3^{\circ}\setminus \nu(D'_{L'})$ are simply connected.
    \item [(ii)] For any $n\geq 0$, the inclusion $(K3^{\circ}\setminus \nu(D_{L'})\#n (S^{2}\times S^{2})\hookrightarrow K3^{\circ}\#n(S^{2}\times S^{2})$ induces an injection on the second homotopy group $\pi_{2}(-)$.
\end{enumerate}
\end{lemma}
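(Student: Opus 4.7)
The plan is to establish (i) by a van Kampen argument, and to deduce (ii) from (i) together with the long exact sequence of the pair $(K3^\circ, K3^\circ\setminus\nu(D_{L'}))$ and the Hurewicz theorem.

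For (i), set $W := K3^\circ \setminus \nu(D_{L'})$. Each component of $D_{L'}$ is a $2$-disk with trivial normal bundle (the base is contractible), so $\nu(D_{L'}) \cong D_{L'} \times D^2$, and its ``interior'' boundary portion $\partial\nu(D_{L'}) \cap W$ is $D_{L'} \times S^1$, a disjoint union of solid tori whose $\pi_1$ is free on the meridians $\mu_1,\dots,\mu_{|L'|}$ of the components of $L'$. Van Kampen applied to the cover $K3^\circ = \nu(D_{L'}) \cup W$ (noting that $\nu(D_{L'})$ is simply connected) yields $\pi_1(K3^\circ) \cong \pi_1(W)/\langle\langle\mu_1,\dots,\mu_{|L'|}\rangle\rangle$, so $\pi_1(W)$ is normally generated by the $\mu_i$. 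It remains to null-homotope each $\mu_i$ in $W$. For this, I model the $2$-handle attached along $L_i$ as $h_i = D^2 \times D^2$ with core $D^2 \times \{0\}$ and cocore $\{0\} \times D^2$; the tubular neighborhood of the core has the form $D^2 \times D^2_\epsilon$ and meets the cocore precisely in the small disk $\{0\} \times D^2_\epsilon$. Hence the remaining annulus $\{0\} \times (D^2 \setminus D^2_\epsilon)$ lies in $W$; one of its boundary circles is $\mu_i$, while the other, $\{0\}\times\partial D^2$, lies on the free boundary $D^2 \times \partial D^2$ of $h_i$, hence on the boundary sphere of the $4$-ball cap. Since the cap is disjoint from $\nu(D_{L'})$ (the removed neighborhoods lie in the collar and in the $2$-handles) and is contractible, $\{0\}\times\partial D^2$ bounds a disk in the cap, still lying in $W$. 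Pasting this disk to the annulus produces a disk in $W$ bounding $\mu_i$, so $\pi_1(W)=1$. The assertion for $D'_{L'}$ follows because $\delta_{K3^\circ}$ restricts to a diffeomorphism $W \to K3^\circ \setminus \nu(D'_{L'})$.

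For (ii), $W$ and $K3^\circ$ are simply connected by (i), and this is preserved under connected sum with $S^2\times S^2$, so by Hurewicz it suffices to prove $H_2$-injectivity. Applying excision to the pair $(K3^\circ, W)$ and using the product structure on $\nu(D_{L'})$,
\[
H_*(K3^\circ, W) \cong H_*(D_{L'} \times D^2,\, D_{L'} \times S^1) \cong H_*(D_{L'}) \otimes H_*(D^2, S^1),
\]
which is $\mathbb{Z}^{|L'|}$ in degree $2$ and zero in degrees $\leq 1$ and in degree $3$. The long exact sequence of the pair therefore begins $0 = H_3(K3^\circ, W) \to H_2(W) \to H_2(K3^\circ)$, giving the desired injection on $H_2$. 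For the stabilized version, perform both connected sums in a common small ball chosen inside $W$; Mayer--Vietoris then identifies the induced map on $H_2$ with the direct sum of $H_2(W) \hookrightarrow H_2(K3^\circ)$ and the identity on the $\mathbb{Z}^{2n}$ summand coming from the $n$ copies of $S^2\times S^2$, which remains injective.

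The main obstacle is constructing the explicit null-homotopy of each meridian in (i), in particular the careful use of the cocore of the $2$-handle together with a disk in the $4$-ball cap. Once $\pi_1(W)=1$ is in hand, part (ii) is an essentially immediate consequence of the long exact sequence and Hurewicz.
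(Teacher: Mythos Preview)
Your proof is correct and follows essentially the same approach as the paper. For (i), both arguments reduce to showing the meridians are null-homotopic and then use the cocore of the $2$-handle together with a disk in the $4$-ball cap; your treatment is just more explicit, and your observation that $\delta_{K3^\circ}$ carries $W$ diffeomorphically onto $K3^\circ\setminus\nu(D'_{L'})$ is a clean way to handle the second case (the paper simply says ``the other case is similar''). For (ii), the paper invokes Mayer--Vietoris directly for the stabilized decomposition, while you first use excision and the long exact sequence of the pair to get $H_3(K3^\circ,W)=0$ and then pass to the stabilized statement by Mayer--Vietoris; these are equivalent standard arguments.
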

\begin{proof}
(i) We prove the case of $D_{L'}$ and the other case is similar. For each component $K$ of $L'$, let $m_{k}\subset \partial (K3^{\circ}\setminus \nu(D_{L'}))$ be the meridian of the embedded disk $D_{K}$.  
Since $K3^{\circ}$ is simply-connected, we see that $\pi_{1}(K3^{\circ}\setminus \nu(D_{L'}))$ is normally generated by $\{m_{K}\}$. So it suffices to  show that $m_{K}$ is null-homotopic in $K3^{\circ}\setminus \nu(D_{L'})$. To prove this, we consider the cocore of the 2-handle attached to $K$. This cocore becomes a cylinder in $K3^{\circ}\setminus \nu(D_{L'})$ and it provides a homotopy between $m_{K}$ and a knot on the boundary of the 4-handle $D^{4}$ (which doesn't intersect $\nu(D_{L'})$). Since $D^{4}$ is contractible, this knot (and hence $m_{K}$) is null-homotopic.

(ii) By the Mayer-Vietoris sequence, the map $$H_{2}((K3^{\circ}\setminus \nu(D_{L'}))\#n(S^{2}\times S^{2});\mathbb{Z})\rightarrow H_{2}(K3^{\circ}\#n(S^{2}\times S^{2});\mathbb{Z})$$ induced by the inclusion is injective. By the Hurewicz theorem, this map is also the induced map on $\pi_{2}(-)$. 
\end{proof}
The next proposition is a variation of the main theorem in \cite{AKMRS}.
\begin{proposition}\label{prop: one is enough}
Let $X$ be a simply-connected, smooth 4-manifold with boundary. Let $D_{K}$ and $D'_{K}$ be two smoothly embedded disks bounded by the same knot $K\subset \partial X$. Suppose the following conditions are all satisfied:
\begin{enumerate}
    \item [(i)] $X\setminus D_{K}$ and $X\setminus D'_{K}$ are simply-connected.
     \item [(ii)] $D_{K}$ and $D'_{K}$ are homotopic relative to  the boundary. 
     \item [(iii)] $[D_{K}]\in H_{2}(X,\partial X;\mathbb{Z}/2)$ is ordinary (i.e. it is not the  Poincar\'e dual of $w_{2}(X)$).
\end{enumerate}
Then in $X\# (S^{2}\times S^{2})$, $D_{K}$ and $D'_{K}$ are smoothly isotopic relative to the boundary.  
\end{proposition}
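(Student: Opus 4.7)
The plan is to follow the strategy of Auckly--Kim--Melvin--Ruberman--Schwartz for closed surfaces, but with Gabai's 4-dimensional light bulb theorem replaced by Schwartz's recent generalization in \cite{schwartz2021} to properly embedded disks. The three hypotheses (i), (ii), (iii) are tailored precisely so that this substitution goes through.

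First, I would use hypothesis (ii) to construct a smoothly immersed concordance $W \subset X \times [0,1]$ from $D_K$ to $D'_K$ (rel $\partial X \times [0,1]$) with only transverse self-intersections; this is standard from a generic homotopy together with a cusp/finger adjustment. Next, I would perform the stabilization inside a small ball meeting a suitably chosen arc joining $D_K$ and $D'_K$, so that the $S^2 \times S^2$ summand supplies a pair of transverse embedded $2$-spheres $S$, $S'$ with trivial normal bundles and $S \cdot S' = 1$. Tubing one of these spheres, say $S$, into $W$ along a properly chosen vertical arc produces a geometrically dual $2$-sphere to the concordance in $(X \# S^2 \times S^2)\times [0,1]$. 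Hypothesis (iii) — that $[D_K]$ is not Poincar\'e dual to $w_2(X)$ — is exactly what guarantees that this tubed sphere has trivial normal bundle, since the relevant framing obstruction is $\langle w_2, [D_K] \rangle$.

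At this point I would apply Schwartz's disk version of the light bulb theorem to $W$: its hypotheses require that $W$ be an immersed disk concordance with a geometrically dual embedded $2$-sphere of trivial normal bundle, and that $\pi_1$ of the complement of each of the boundary disks is trivial. The first follows from the previous step, and the second is exactly hypothesis (i) (applied to both $D_K$ and $D'_K$ in $X$, which survives stabilization since $\pi_1$ is unchanged by connect-summing with $S^2\times S^2$ along a ball in the complement). The conclusion of Schwartz's theorem converts $W$ into an embedded concordance, and then standard uniqueness arguments (again via the dual sphere and a second application of light bulb-type moves, as in \cite{AKMRS}) upgrade the embedded concordance to a smooth isotopy rel $\partial$.

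The main obstacle, and the step requiring the most care, is arranging the geometric dual sphere so that its normal bundle is genuinely trivial and so that it is disjoint from $W$ away from one controlled intersection point — this is where hypothesis (iii) enters crucially, and where the choice of which $S^2 \times S^2$-summand sphere to tube must be made carefully in order to avoid creating obstructing framing defects. The remaining subtleties, such as verifying that the $\pi_1$-conditions persist after stabilization and after the tubing, are essentially formal consequences of the hypotheses and van Kampen.
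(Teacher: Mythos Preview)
Your outline misidentifies the input to Schwartz's theorem. The result you cite (\cite[Corollary 3.3]{schwartz2021}) is not a statement about immersed concordances in $X\times[0,1]$; it is a statement about two properly embedded disks in a fixed $4$-manifold that share a common embedded geometric dual sphere. There is no ``disk light bulb theorem'' whose hypothesis is an immersed concordance $W$ with a dual $2$-sphere in a $5$-manifold, and the subsequent passage ``embedded concordance $\Rightarrow$ isotopy'' is not a standard step either. So as written, the key step of the proposal invokes a theorem in a setting where it does not apply.

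The paper's argument never leaves the $4$-manifold. One first uses hypothesis~(i) to find an immersed sphere $\Sigma$ in $X$ meeting $D_K$ transversely in one point; hypothesis~(iii) is then used to arrange that $\Sigma$ has \emph{even} self-intersection (this is the actual role of the ordinariness condition, not a framing obstruction $\langle w_2,[D_K]\rangle$ for a tubed sphere). Hypothesis~(ii) gives $\Sigma\cdot D'_K=1$ algebraically, and after finger/Whitney moves one arranges $\Sigma$ to meet $D'_K$ geometrically once as well. Now one stabilizes away from $D_K\cup D'_K\cup\Sigma$ and tubes $\Sigma$ into one $S^2$-factor so that it acquires an embedded geometric dual $T$; the Norman trick with copies of $T$ then makes $\Sigma$ embedded with square zero. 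At this point $D_K$ and $D'_K$ are two embedded disks in $X\#(S^2\times S^2)$ with a common geometric dual $\Sigma$, and Schwartz's theorem applies directly (the Dax invariant vanishes because the ambient manifold is simply connected, and $\pi_1$ of the complement of $\Sigma$ is trivial because $\Sigma$ itself has a geometric dual $T$). Your plan should be reworked along these lines: build the common dual sphere in the $4$-manifold, rather than a concordance in one dimension higher.
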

\begin{proof} The proof is adapted from the corresponding proof in \cite{AKMRS}. For completeness, we sketch the argument, with an emphasize on the adaptions needed here. First, since $X\setminus D_{K}$ is simply connected, we can find an immersed sphere $\Sigma$ that intersects $D_{K}$ transversely at a single point. Using condition (iii), one  can modify $\Sigma$ so that it has an even self-intersection number. Since $D'_{K}$ is homotopic to $D_{K}$ relative to the boundary, the algebraic intersection number $\Sigma\cdot D'_{K}$ equals $1$. After suitable finger moves and the (immersed) Whitney moves, we can isotope $D'_{K}$ and $\Sigma$ so that $\Sigma$ intersects both $D_{K}$ and $D'_{K}$ transversely at a single point. To arrange for $\Sigma$ to be embedded, we take a connected sum with $S^{2}\times S^{2}$ away from $D_{K}\cup D'_{K}\cup \Sigma$. Take coordinate 2-spheres $S=S^{2}\times \{\text{pt}\}$ and $T=\{\text{pt}\}\times S^{2}$. By replacing $\Sigma$ with its connected sum with $S$ along a tube, we may assume that $\Sigma$ has a  geometric dual $T$ (i.e., $T$ intersects transverse $\Sigma$ at a single point). Then we can eliminate the double points of $\Sigma$ by the Norman trick, tubing to parallel copies of $T$. This gives us an embedded $\Sigma$ with even self-intersection number. After tubing additional copies of $T$, we can arrange the self-intersection number of $\Sigma$ to be zero. Therefore,  $D_{K}$ and $D'_{K}$ have a common geometric dual $\Sigma$ in $X\# (S^{2}\times S^{2})$. No essential changes are necessary until step. Then we apply the light bulb theorem for embedded disks \cite[Corollary 3.3]{schwartz2021}, which states that $D_{K}$ and $D'_{K}$ are smoothly isotopic relative to the boundary if the following three conditions are verified:
\begin{enumerate}
    \item [(1)] $D_{K}$ and $D'_{K}$ are homotopic in $X\#(S^{2}\times S^{2})$ relative to the boundary. 
    \item [(2)] The Dax invariant $\operatorname{Dax}(D_{K},D'_{K})$ vanishes.
    \item [(3)] The map $\pi_{1}((X\#(S^{2}\times S^{2}))\setminus \Sigma)\rightarrow \pi_{1}(X\#(S^{2}\times S^{2}))$ induced by the inclusion is an isomorphism. (I.e. $(X\#(S^{2}\times S^{2}))\setminus \Sigma$ is simply connected.)
\end{enumerate}
(1) follows from our assumption (ii). (2) is trivial since any surfaces in a simply connected 4-manifold have trivial Dax invariant. (3) is satisfied because $\Sigma$ has an geometric dual $T$ (so the meridian of $\Sigma$ bounds a disk in $X\setminus \Sigma$). This finishes the proof.
\end{proof}
\begin{proof}[Proof of Proposition \ref{upper bound on stabilizations} (2)] Let $L_0\subset L$ be the $m$-component unlink. Denote the components of $L\setminus L_0$ by $K_{1}, K_{2},\cdots, K_{22-m}$. 
For $1\leq n\leq 22-m$, we set 
$$
L_{n}=L_{0}\bigsqcup K_{1}\bigsqcup \cdots \bigsqcup K_{n} \quad\text{for}\quad 1\leq n\leq  22-m.
$$
We will prove the following claim by induction. 
\begin{claim}
For any all $0\leq  n\leq 22-m$, $D_{L_{n}}$ and $D'_{L_{n}}$ are smoothly isotopic relative to the boundary in $K3^{\circ}\#n(S^{2}\times S^{2})$ .
\end{claim}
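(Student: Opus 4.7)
The proof proceeds by induction on $n$, with the base case $n=0$ handled by part (1) of the proposition, just established.

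For the inductive step, suppose the claim holds for $n$. By the isotopy extension theorem, the smooth isotopy from $D_{L_n}$ to $D'_{L_n}$ in $K3^{\circ}\#n(S^2\times S^2)$ extends to an ambient isotopy fixing the boundary; extending trivially over the new $S^2\times S^2$-summand yields an ambient isotopy of $K3^{\circ}\#(n+1)(S^2\times S^2)$ relative to the boundary that sends $D_{L_n}$ to $D'_{L_n}$. Under this isotopy, $D_{K_{n+1}}$ is carried to a new disk $D''_{K_{n+1}}$ bounded by $K_{n+1}$ and disjoint from $D'_{L_n}$. The problem thereby reduces to producing a smooth isotopy between $D''_{K_{n+1}}$ and $D'_{K_{n+1}}$, relative to the boundary, in $K3^{\circ}\#(n+1)(S^2\times S^2)\setminus \nu(D'_{L_n})$.

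To do this, I plan to apply Proposition \ref{prop: one is enough} to the 4-manifold $X:=(K3^{\circ}\#n(S^2\times S^2))\setminus \nu(D'_{L_n})$ and to the two properly embedded disks $D''_{K_{n+1}},\, D'_{K_{n+1}} \subset X$, both bounded by the knot $K_{n+1}\subset \partial X$; the resulting smooth isotopy will live in $X \#(S^2\times S^2)$, where the extra $S^2 \times S^2$-factor is precisely the $(n{+}1)$-th stabilization allowed by the claim. The three hypotheses of Proposition \ref{prop: one is enough} are verified as follows. The complements of $D''_{K_{n+1}}$ and of $D'_{K_{n+1}}$ in $X$ are, up to taking tubular neighborhoods, $(K3^{\circ}\setminus\nu(D_{L_{n+1}}))\#n(S^2\times S^2)$ and $(K3^{\circ}\setminus\nu(D'_{L_{n+1}}))\#n(S^2\times S^2)$, both simply connected by Lemma \ref{lem: homology after removing the tubular neighborhood}(i) combined with van Kampen for the connect sum. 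For the homotopy hypothesis, both disks are topologically isotopic to $D_{K_{n+1}}$ in the ambient (via the ambient smooth isotopy on one side and via the topological triviality of the Dehn twist on the other), hence homotopic relative to the boundary there; by Lemma \ref{lem: homology after removing the tubular neighborhood}(ii) the inclusion $X\hookrightarrow K3^{\circ}\#n(S^2\times S^2)$ injects on $\pi_2$, so the homotopy descends to $X$. Finally, the ambient manifold is spin, as a connect sum of spin manifolds, so $w_2(X)=0$ and ordinariness reduces to $[D_{K_{n+1}}]\neq 0$ in $H_2(X,\partial X;\mathbb{Z}/2)$; this is witnessed geometrically by the cocore of the $(n{+}1)$-th $2$-handle of $K3^{\circ}$, which is a properly embedded disk in $X$ meeting $D_{K_{n+1}}$ transversely in a single point.

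Applying Proposition \ref{prop: one is enough} and composing with the ambient isotopy from the first step produces the required smooth isotopy from $D_{L_{n+1}}$ to $D'_{L_{n+1}}$ in $K3^{\circ}\#(n+1)(S^2\times S^2)$, completing the induction. The main obstacle is verifying the ordinariness hypothesis, since one must find a geometric dual to $D_{K_{n+1}}$ that survives the removal of $\nu(D'_{L_n})$ and any subsequent isotopies; the other two hypotheses follow rather directly from the preceding lemmas.
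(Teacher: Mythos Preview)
Your argument is essentially the paper's: induction with base case from part~(1), inductive step via Proposition~\ref{prop: one is enough} after isotopy extension, and hypotheses (i)--(iii) verified through Lemma~\ref{lem: homology after removing the tubular neighborhood} and a geometric dual built from the cocore (your spinness shortcut for ordinariness is a mild simplification of the paper's direct check of the characteristic condition). One correction on the point you flagged as the main obstacle: the cocore of the $(n{+}1)$-th $2$-handle is \emph{not} properly embedded in $X$, since its belt sphere lies on $\partial D^{4}$ in the interior of $X$; you must cap it off by a surface $F\subset D^{4}$ to obtain a closed surface $F'$ as the paper does, and then $F'\cdot D'_{K_{n+1}}=1$ gives nontriviality of $[D'_{K_{n+1}}]=[D''_{K_{n+1}}]$ in $H_{2}(X,\partial X;\mathbb{Z}/2)$ --- note also that $D_{K_{n+1}}$ itself need not lie in $X$, as it can meet $D'_{L_n}$ in the collar, so that is the class you are actually testing.
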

The case $n=0$ is implied by Proposition \ref{upper bound on stabilizations} (1). Now suppose the case $n=i$ has been proved. Denote $K3^{\circ}\#i(S^{2}\times S^{2})$ by $X_{i}$. Then by our induction hypothesis, there exists a diffeomorphism $f:X_{i}\rightarrow X_{i}$ which is smoothly isotopic to $\operatorname{id}_{X_{i}}$ relative to the boundary and satisfies $f(D'_{L_{i}})=D_{L_{i}}$. Note that $D_{K_{i+1}}$ and $f(D'_{K_{i+1}})$ are disks embedded in $X_{i}\setminus \nu(D_{L_{i}})$ bounded by the same knot $K_{i+1}$. To prove the case $n=i+1$, it suffices to show that $D_{K_{i+1}}$ and $f(D'_{K_{i+1}})$ are smoothly isotopic relative to  the boundary in $(X_{i}\setminus \nu(D_{L_{i}}))\# (S^{2}\times S^{2})$. (The connected sum is taken near $\partial X_{i}$ so that the isotopy between $f$ and $\id_{X_{i}}$ can be trivially extended.) By Proposition \ref{prop: one is enough}, we just need to check the following three conditions:
\begin{enumerate}
    \item $X_{i}\setminus \nu(D_{L_{i}})$, $X_{i}\setminus \nu(D_{L_{i}}\bigsqcup D_{K_{i+1}})$ and $X_{i}\setminus \nu(D_{L_{i}}\bigsqcup f(D'_{K_{i+1}}))$ are all simply connected. 
    \item The homology class $[D_{K_{i+1}}]\in H_{2}(X_{i}\setminus \nu(D_{L_{i}});\mathbb{Z}/2)$ is ordinary.
    \item $D_{K_{i+1}}$ and $f(D'_{K_{i+1}})$ are homotopic in $X_{i}\setminus \nu(D_{L_{i}})$ relative to the boundary. 
\end{enumerate}
Note that  $$X_{i}\setminus \nu(D_{L_{i}}\bigsqcup D_{K_{i+1}})\cong (K3^{\circ} \setminus \nu(D_{L_{i+1}}))\#i(S^{2}\times S^{2}),$$
$$
X_{i}\setminus \nu(D_{L_{i}}\bigsqcup f(D'_{K_{i+1}}))\cong (K3^{\circ} \setminus \nu(D'_{L_{i+1}}))\#i(S^{2}\times S^{2}).
$$
Therefore, (1) is directly implied by Lemma \ref{lem: homology after removing the tubular neighborhood} (i).

To prove (2), we consider the cocore of the two-handle $h_{2}^{i+1}$ attached to $K_{i+1}$. The boundary of this cocore (i.e., the  belt sphere) is a knot on the boundary of the 4-handle $D^{4}$. Hence it bounds a properly embedded surface $F$ in $D^{4}$. We consider the surface 
$$
F'=\{\text{cocore of }h_{2}^{i+1}\}\cup F\hookrightarrow   K3^{\circ}\setminus\nu(D_{L_{i}})\subset K3^{\circ}.
$$
Since $K3^{\circ}$ has an even intersection form, the self-intersection number of $F'$ is even. Note that  
$$
D_{K_{i+1}}\cdot F'=1\not\equiv F'\cdot F'\mod 2.
$$
So $[D_{K_{i+1}}]$ is ordinary.

To prove (3), we note that  $D_{K_{i+1}}$ and $D'_{K_{i+1}}$ are topologically isotopic relative to the boundary in $K3^{\circ}$. Hence they are also topologically isotopic relative to the boundary in $X_{i}$. Since $D'_{K_{i+1}}$ is smoothly isotopic to $f(D'_{K_{i+1}})$ relative to the boundary, we see that $D_{K_{i+1}}$ and $f(D'_{K_{i+1}})$ are topologically isotopic relative to the boundary in $X_{i}$. In particular, they are homotopic to each other relative to the  boundary in $X_{i}$. Consider the map 
$
\tau: S^{2}\rightarrow X_{i}
$
which sends the upper hemisphere and the lower hemisphere to $D_{K_{i+1}}$ and $f(D'_{K_{i+1}})$ respectively. Since $D_{K_{i+1}}$ and $f(D'_{K_{i+1}})$ are homotopic in $X_{i}$ relative to the boundary, $\tau(S^{2})$ is null-homotopic in $X_{i}$. By Lemma \ref{lem: homology after removing the tubular neighborhood} (2), $\tau(S^{2})$ is also null-homotopic in $X_{i}\setminus \nu(D_{L_{i}})$. Therefore, 
$D_{K_{i+1}}$ and $f(D'_{K_{i+1}})$ are homotopic in $X_{i}\setminus \nu(D_{L_{i}})$ relative to the boundary. This proves (3).
\end{proof}

\section{Equivariant Stable Homotopy Theory}\label{section homotopy theory}
In this section, we briefly recall the definitions of equivariant stable homotopy groups and state some important properties about them. In particular, we state a version of the tom Dieck splitting theorem for an incomplete universe, which turns the computation of the equivariant stable group of a $G$-space into the computation of certain non-equivariant stable homotopy groups. Then we use this theorem to prove some algebraic propositions about the $\Pin$-equivariant stable homotopy groups of spheres. These results will be essentially used in Section \ref{section: properties of FBF} to prove the vanishing theorem for the family Bauer--Furuta invariant. Our exposition here follows the classical method of Adams \cite{Adams1982} (which assumes $G$ is finite). We refer to  \cite{schwede2014,Lewis-May-Steinberger1986} for the general case of compact Lie groups and more complete discussions using the language of equivariant spectra.

\subsection{The  tom Dieck splitting theorem} 
Let $G$ be a compact Lie group. We consider an infinite dimensional $G$-representation $\mathcal{U}$ that satisfies the following two properties:
\begin{itemize}
    \item There is a canonical embedding from the infinite dimensional trivial representation $ \bigoplus\limits_{\infty}\mathbb{R}$ to $\mathcal{U}$. 
    \item For any finite dimensional subrepresentation $V$ of $\mathcal{U}$, there exists an embedding from  $\bigoplus\limits_{\infty}V$ to
    $\mathcal{U}$.  
\end{itemize}
We call such $\mathcal{U}$ a ``$G$-universe''. If all finite dimensional representation of $G$ can embedded into $\mathcal{U}$, then we call $\mathcal{U}$ a complete universe. Otherwise, we say $\mathcal{U}$ is incomplete. For our topological applications, it is necessary to consider incomplete universes.

We equip $\mathcal{U}$ with an $G$-invariant inner product. For any pointed $G$-spaces $X,Y$ and any finite dimensional subrepresentations $V\subset W\subset \mathcal{U}$, there is a map 
$$
[S^{V}\wedge X,S^{V}\wedge Y]^{G}\xrightarrow{\id_{S^{W-V}}\wedge -} [S^{W}\wedge X,S^{W}\wedge Y]^{G},
$$
where $\id_{S^{W-V}}$  denotes the identity map on the representation sphere of the orthogonal  complement of $V$ in $W$. 
\begin{definition}\label{def: equivariant stable homotopy group}
For any pointed spaces $X,Y$, the equivariant stable homotopy group is defined as 
\begin{equation*}
\{X,Y\}^{G}_{\mathcal{U}}:=\lim\limits_{\substack{\longrightarrow\\ V\subset \mathcal{U} }} [S^{V}\wedge X,S^{V}\wedge Y]^{G},
\end{equation*}
where the limit is taken over all finite dimensional $G$-invariant subspace $V\subset \mathcal{U}$. (Just like the non-equivariant setting, $\{X,Y\}^{G}_{\mathcal{U}}$ has a abelian group structure.) For $k\in \mathbb{N}$, we use $\pi^{G}_{k,\mathcal{U}}(X)$ to denote $\{S^{k}, X\}^{G}_{\mathcal{U}}$. \end{definition}
 When $G$ is the trivial group, there is essentially a unique choice of $\mathcal{U}$. In this case, we denote $\{X,Y\}^{G}_{\mathcal{U}}$ by $\{X,Y\}$ and denote  $\pi^{G}_{k,\mathcal{U}}(X)$ by $\pi^{s}_{k}(X)$. 

Given a closed subgroup $H$ of $G$, we use $\operatorname{res}^{G}_{H}(\mathcal{U})$ to denote the same space $\mathcal{U}$, regarded it as an $H$-representation space. Then there is a well-defined restriction map 
$$
\operatorname{res}^{G}_{H}: \{X,Y\}^{G}_{\mathcal{U}}\rightarrow \{X,Y\}^{H}_{\operatorname{res}^{G}_{H}(\mathcal{U})}
$$
defined by only remembering the $H$-action. 

Also, let $N_{G}H$ be the normalizer of $H$ in $G$ and let $W_{G}H=N_{G}H/H$ be the Weyl group. Then the $H$-fixed point of $\mathcal{U}$, which we denote by $\mathcal{U}^{H}$, is an $W_{G}H$-universe. There is a well-defined map 
\begin{equation}
 (-)_{Y,X,G}^{H}: \{Y,X\}^{G}_{\mathcal{U}}\rightarrow \{Y^{H},X^{H}\}^{W_{G}H}_{\mathcal{U}^{H}},   
\end{equation}
which sends a  maps to its restriction to the $H$-fixed points. In the special case $Y=S^{k}$, we have the map 
$$
(-)_{S^{k},X,G}^{H}:\pi^{G}_{k,\mathcal{U}}(X)\rightarrow \pi^{W_{G}H}_{k,\mathcal{U}^{H}}(X^{H}).
$$
When $X,Y,G$ are obvious from the context, we will suppress them from our notations and simply write $(-)^{H}$. 

Next, for any pointed $G$-space $X$, we introduce a map
\begin{equation}\label{eq: zeta}
\zeta^{G}_{H}: \pi^{W_{G}H}_{k, \mathcal{U}^{H}}(EW_{G}H_{+}\wedge X^{H})\rightarrow \pi^{G}_{k,\mathcal{U}}(X)
\end{equation}
which is crucial in the tom Dieck splitting theorem. The precise definition of $\zeta^{G}_{H}$ can be found in \cite[Page 60]{schwede2014}. For our purpose, it suffices to consider the case that $H$ is a normal subgroup of $G$. In this case, we have $N_{G}H=G, W_{G}H=G/H$ and the map $\zeta^{G}_{H}$ has a simple description: We have taken an element $[f]\in \pi^{G/H}_{k, \mathcal{U}^{H}}((EG/H)_{+}\wedge X^{H})$, represented by a $G/H$-equivariant map 
$$
g: S^{V}\wedge S^{k}\rightarrow S^{V}\wedge (EG/H)_{+}\wedge X^{H}
$$
for some $G/H$-representation $V\subset \mathcal{U}^{H}\subset \mathcal{U}$. We regard $f$ as a $G$-equivariant map and consider the composition 
\begin{equation}\label{eq: definition of zeta}
\widetilde{g}:S^{V}\wedge S^{k}\xrightarrow{f} S^{V}\wedge (EG/H)_{+}\wedge X^{H}\xrightarrow{1\wedge p_{0}\wedge 1} S^{V}\wedge S^{0}\wedge X^{H}= S^{V}\wedge X^{H}\xrightarrow{1\wedge i} S^{V}\wedge X.
\end{equation}

Here $p_{0}: (EG/H)_{+}\rightarrow S^{0}$ is the map that preserves the base point and collapses the whole $EG/H$ to the other point in $S^{0}$, and the map $i:X^{H}\rightarrow X$ is the inclusion map. Note that both $p_{0}$ and $i$ are $G$-equivariant because $H$ is normal. We define $\zeta^{G}_{H}[g]:=[\widetilde{g}]$. 

Now we state the tom Dieck
splitting theorem for an incomplete universe. This theorem was originally proved in \cite{tomDieck1975} for a complete universe. The version given below is a special case of \cite[Theorem 3.3]{Lewis2000}. See \cite{blumberg2021equivariant,Lewis-May-Steinberger1986} for various generalizations to equivariant spectra. 

\begin{theorem}\label{thm: tom Dieck} Let $X$ be a pointed $G$-space. Then the map 
\begin{equation}\label{eq: tom Dieck iso}
\sum_{[H]}\zeta^{G}_{H}:\bigoplus_{[H]}\pi^{W_{G}H}_{k,\mathcal{U}^{H}}(EW_{G}H_{+}\wedge X^{H})\rightarrow \pi^{G}_{k,\mathcal{U}}(X)
\end{equation}
is an isomorphism for any $k\geq 0$. Here the direct sum is taken over all conjugacy classes of subgroups $H$ that arises as the isotropy group of some point in $\mathcal{U}$.
\end{theorem}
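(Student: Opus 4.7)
The plan is to follow tom Dieck's classical inductive strategy \cite{tomDieck1975}, adapted to the incomplete universe $\mathcal{U}$. Both sides of (\ref{eq: tom Dieck iso}) take filtered colimits along $G$-CW skeleta to filtered colimits of abelian groups and convert equivariant cofiber sequences to long exact sequences (once $k$ is allowed to vary and one passes to the full $\mathbb{Z}$-graded equivariant stable homotopy groups). A standard colimit plus five-lemma reduction therefore brings everything to the case where $X$ is a single equivariant cell, i.e., $X=G/K_+\wedge S^n$ with $K$ arising as the isotropy of some point in $\mathcal{U}$. The first step I would carry out is to verify naturality of $\zeta^G_{H}$ with respect to the attaching cofiber sequence $X_{n-1}\hookrightarrow X_n\to G/K_+\wedge S^n$; this is a direct inspection of the definition (\ref{eq: definition of zeta}), since both the projection $p_{0}:(EW_GH)_+\to S^0$ and the inclusion $X^H\hookrightarrow X$ are manifestly functorial in $X$.

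For the base case $X=G/K_+$, I would exploit the fixed-point decomposition
\[
(G/K)^{H}=\bigsqcup_{gK\,:\,g^{-1}Hg\subset K} gK/K,
\]
and regroup the summands by their $W_GH$-orbits, which correspond bijectively to $G$-conjugacy classes of pairs $(H,K)$ with $H$ subconjugate to $K$. Via the Wirthmüller isomorphism $G_+\wedge_H S^{V}\simeq S^{V+L(G,H)}\wedge G/H_+$, where $L(G,H)$ is the tangent $H$-representation at the identity coset, each such summand can be identified with exactly one image of a $\zeta^G_{H}$. Matching these identifications term-by-term is a bookkeeping exercise that recovers the familiar Burnside-ring style decomposition of $\pi^G_{k,\mathcal{U}}(G/K_+)$ and establishes (\ref{eq: tom Dieck iso}) on the base case; the inductive framework above then upgrades it to all $X$.

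The main obstacle is incompleteness. The Wirthmüller isomorphism requires $L(G,H)$ to embed in $\mathcal{U}$, which holds precisely when $H$ occurs as an isotropy subgroup of some point of $\mathcal{U}$, and this is exactly the restriction placed on the index set $[H]$ in (\ref{eq: tom Dieck iso}). Moreover, the factor $EW_GH_+$ on the domain is essential rather than decorative: in the composition (\ref{eq: definition of zeta}), smashing with $p_0$ annihilates the contributions of $X^{H'}$ with $H'\supsetneq H$, ensuring that each $\zeta^G_H$ picks out only the stratum of isotropy exactly (up to conjugacy) equal to $H$, with no double counting across different conjugacy classes. Verifying these two compatibilities in the incomplete setting is the technical heart of \cite[Theorem 3.3]{Lewis2000}; the space-level formulation stated here is then obtained by evaluating their spectrum-level splitting on the suspension spectrum of $X$ and taking $\pi_k$.
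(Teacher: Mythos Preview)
The paper does not actually prove Theorem~\ref{thm: tom Dieck}. It is stated as a known result and attributed to the literature: the complete-universe case to tom Dieck \cite{tomDieck1975}, and the incomplete-universe version to \cite[Theorem 3.3]{Lewis2000}, with further generalizations cited in \cite{blumberg2021equivariant,Lewis-May-Steinberger1986}. So there is no ``paper's own proof'' to compare against; your proposal supplies an argument where the paper supplies only references.

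Your outline is recognizably the classical strategy (reduce to orbits $G/K_+$ by cellular induction and a five-lemma argument, then analyze the orbit case via Wirthm\"uller and the combinatorics of $(G/K)^H$), and you correctly flag where incompleteness enters: the Wirthm\"uller isomorphism needs $L(G,H)\hookrightarrow \mathcal{U}$, which is guaranteed precisely when $H$ is an isotropy subgroup of $\mathcal{U}$. That said, as written it is a sketch rather than a proof. A few points you would need to nail down if you want this to stand on its own: (i) the cellular induction requires the isotropy groups $K$ of the cells of $X$ to be isotropy groups of $\mathcal{U}$ as well, which is not automatic for an arbitrary $G$-CW complex $X$---one typically reduces to such $X$ or phrases the result at the level of suspension spectra in the $\mathcal{U}$-indexed category, as Lewis does; (ii) the bookkeeping you defer in the base case (matching $W_GH$-orbits on $(G/K)^H$ with double cosets and then with the summands) is exactly where most of the work in tom Dieck's original paper lies, and the Lie-group case involves the adjoint shift $S^{\operatorname{Ad}(W_GH)}$ that the paper uses immediately afterwards in (\ref{eq: Adams' isomorphism}); and (iii) your description of the role of $EW_GH_+$ is morally right but the phrase ``smashing with $p_0$ annihilates the contributions of $X^{H'}$ with $H'\supsetneq H$'' is not quite accurate---$p_0$ is not what kills those contributions; rather, the free $W_GH$-action on $EW_GH$ is what isolates the correct stratum after passing to fixed points. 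None of this is fatal, but if you intend to replace the citation with a proof, these are the places where ``bookkeeping exercise'' hides real content.
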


Note that the $W_{G}H$-action on $(EW_{G}H)_{+}\wedge X^{H}$ is free away from the base point. By the Adams' isomorphism theorem  \cite[Corollary 7.2]{Lewis-May-Steinberger1986}, we have an isomorphism 
\begin{equation}\label{eq: Adams' isomorphism}
\pi^{s}_{k}((S^{\operatorname{Ad}(W_{G}H)}\wedge (EW_{G}H)_{+}\wedge X^{H})/W_{G}H)\xrightarrow{\cong }\pi^{W_{G}H}_{k,\mathcal{U}^{H}}((EWH)_{+}\wedge X^{H}).
\end{equation}
Here $\operatorname{Ad}(W_{G}H)$ denotes the adjoint representation of $W_{G}H$ on its Lie algebra. Note this isomorphism relies on  the fact that $W_{G}H$ can be embedded into $\mathcal{U}^{H}$ because $H$ is an isotropy group. (See \cite[Theorem 12.2]{Lewis2000}.) Using (\ref{eq: Adams' isomorphism}), we can rewrite the isomorphism (\ref{eq: tom Dieck iso}) as the isomorphism 
\begin{equation}\label{eq: tom Dieck iso 2}
\bigoplus_{[H]}\pi^{s}_{k}((S^{\operatorname{Ad}(W_{G}H)}\wedge (EW_{G}H)_{+}\wedge X^{H})/W_{G}H)\xrightarrow{\cong }\pi^{G}_{k,\mathcal{U}}(X).
\end{equation}

\subsection{Some additional lemmas}
We introduce a few additional lemmas that will be useful in our later computations. 
\begin{lemma}\label{lem: adjunction}
Let $X$ be a pointed space and $Y$ be a pointed $G$-space. Then $\{G_{+}\wedge X, Y\}^{G}_{\mathcal{U}}\cong \{X,Y\}$.
\end{lemma}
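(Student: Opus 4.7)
The plan is to reduce this lemma to the classical unstable adjunction between $G$-equivariant maps out of a free $G$-space and non-equivariant maps. First, I would unfold the definition of the equivariant stable homotopy group, writing
\[
\{G_+ \wedge X, Y\}^G_{\mathcal{U}} = \varinjlim_{V \subset \mathcal{U}} [S^V \wedge G_+ \wedge X, S^V \wedge Y]^G,
\]
where the colimit ranges over finite-dimensional $G$-subrepresentations $V$ of $\mathcal{U}$. Since $X$ is an ordinary pointed space (with trivial $G$-action), there is a canonical $G$-homeomorphism $S^V \wedge G_+ \wedge X \cong G_+ \wedge (S^V \wedge X)$, so the left-hand side is built out of pointed $G$-maps from a free $G$-space.

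Next, I would invoke the basic unstable adjunction: for any pointed $G$-space $B$ and any pointed space $A$ with trivial $G$-action, the restriction map
\[
[G_+ \wedge A, B]^G \longrightarrow [A, B], \qquad f \longmapsto f|_{\{e\}_+ \wedge A},
\]
is a natural bijection, with inverse sending $g: A \to B$ to the $G$-equivariant extension $(h, a) \mapsto h \cdot g(a)$. A straightforward check shows this bijection descends to pointed homotopy classes (since any equivariant homotopy is determined by its restriction to $\{e\}_+ \wedge A$ and vice versa). Applying this with $A = S^V \wedge X$ and $B = S^V \wedge Y$ gives
\[
[S^V \wedge G_+ \wedge X, S^V \wedge Y]^G \cong [S^V \wedge X, S^V \wedge Y].
\]
Moreover these isomorphisms are compatible with the suspension maps $\id_{S^{V'-V}} \wedge -$ for $V \subset V' \subset \mathcal{U}$, by naturality of the adjunction in the $A$-variable.

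To finish, I would identify the resulting colimit with $\{X, Y\}$. Because $\mathcal{U}$ contains the infinite-dimensional trivial representation by definition of a $G$-universe, I can choose a cofinal increasing chain $V_1 \subset V_2 \subset \cdots$ of finite-dimensional $G$-subrepresentations with $\dim V_n \to \infty$. Non-equivariantly each $S^{V_n}$ is a $(\dim V_n)$-sphere, so after fixing orientations the transition maps become ordinary suspensions and the colimit equals $\varinjlim_n [\Sigma^{\dim V_n} X, \Sigma^{\dim V_n} Y] = \{X, Y\}$, establishing the isomorphism.

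The main (albeit mild) obstacle I anticipate is purely bookkeeping: verifying that under the chosen identifications $S^{V_n} \simeq S^{\dim V_n}$, the suspension maps in the equivariant directed system correspond to genuine suspensions in the non-equivariant stable homotopy system, so that the two colimits really do coincide. This amounts to a naturality check that uses no equivariant input beyond the existence of a trivial infinite-dimensional summand in $\mathcal{U}$.
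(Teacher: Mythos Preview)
Your proposal is correct and follows the same idea as the paper's proof: both invoke the free--forgetful adjunction $(G_{+}\wedge -)\rightleftharpoons \operatorname{res}^{G}_{\{e\}}(-)$, with the paper simply citing the stable version from \cite{Lewis-May-Steinberger1986} and \cite{Adams1982} while you unfold it by hand at the unstable level and pass to the colimit. The one place worth tightening is your ``canonical $G$-homeomorphism'' $S^{V}\wedge G_{+}\wedge X\cong G_{+}\wedge (S^{V}\wedge X)$: since $G$ acts nontrivially on $S^{V}$, this identification is not mere reordering but the shearing map $(v,g,x)\mapsto (g,g^{-1}v,x)$, which is exactly what makes the subsequent unstable adjunction (with $A=S^{V}\wedge X$ carrying trivial action) applicable.
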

\begin{proof}
This comes from the standard adjunction $(G_{+}\wedge - )\rightleftharpoons \operatorname{res}^{G}_{\{e\}}(-)$. (See \cite[Theorem 4.7]{Lewis-May-Steinberger1986} and \cite[Theorem 5.1]{Adams1982}).
\end{proof}

\begin{lemma}\label{lem: simple vanishing result}
Let $X$ be a $d$-dimensional $G$-CW complex and let $Y$ a pointed $G$-space. Suppose the $G$-action on $X$ is free away from the base ponint and that the non-equivariant stable homotopy group $\pi^{s}_{m}(Y)$ is trivial for any $0\leq m\leq d$. Then  we have $\{X,Y\}^{G}_{\mathcal{U}}\cong 0$.
\end{lemma}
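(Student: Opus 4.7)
The plan is to argue by induction on the skeletal filtration of $X$. Since the $G$-action on $X$ is free away from the basepoint, the $G$-CW structure gives a filtration
\[
\ast = X_{-1} \subset X_0 \subset X_1 \subset \cdots \subset X_d = X
\]
in which every cell is free, so each successive quotient takes the form
\[
X_n / X_{n-1} \;\simeq\; \bigvee_{\alpha} G_+ \wedge S^n.
\]
I would prove by induction on $n$ that $\{X_n, Y\}^G_{\mathcal{U}} = 0$ for every $-1 \leq n \leq d$. The base case $n=-1$ is trivial, and the lemma is exactly the case $n=d$.

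For the inductive step, I would apply the contravariant functor $\{-, Y\}^G_{\mathcal{U}}$ to the cofiber sequence $X_{n-1} \hookrightarrow X_n \to X_n / X_{n-1}$, obtaining the exact sequence
\[
\{X_n/X_{n-1}, Y\}^G_{\mathcal{U}} \;\longrightarrow\; \{X_n, Y\}^G_{\mathcal{U}} \;\longrightarrow\; \{X_{n-1}, Y\}^G_{\mathcal{U}}.
\]
The right-hand term is zero by the inductive hypothesis. For the left-hand term, since $\{-, Y\}^G_{\mathcal{U}}$ converts wedges into products, Lemma \ref{lem: adjunction} gives
\[
\{X_n/X_{n-1}, Y\}^G_{\mathcal{U}} \;\cong\; \prod_{\alpha} \{G_+ \wedge S^n, Y\}^G_{\mathcal{U}} \;\cong\; \prod_{\alpha} \pi^s_n(Y),
\]
which vanishes by the hypothesis $\pi^s_n(Y)=0$ for $0 \leq n \leq d$. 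Exactness then forces $\{X_n, Y\}^G_{\mathcal{U}} = 0$, completing the induction.

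The argument is essentially formal once two standard inputs are in place: the long exact sequence of equivariant stable homotopy groups associated to a cofiber sequence (a consequence of the triangulated structure on the equivariant stable homotopy category), and the free-orbit adjunction recorded in Lemma \ref{lem: adjunction}. The only delicate point worth flagging is that the identification $X_n/X_{n-1} \simeq \bigvee G_+ \wedge S^n$ genuinely uses the freeness of the action; had we allowed non-free cells $(G/H)_+ \wedge S^n$ with $H \neq \{e\}$, the reduction of $\{-, Y\}^G_{\mathcal{U}}$ to the non-equivariant group $\pi^s_n(Y)$ would fail, and one would instead have to use the tom Dieck splitting (Theorem~\ref{thm: tom Dieck}) to control the various fixed-point contributions. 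Beyond this bookkeeping, I do not anticipate any serious obstacle.
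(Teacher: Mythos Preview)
Your proposal is correct and follows essentially the same approach as the paper's proof: both argue by induction on the skeletal filtration, use freeness to identify $X_n/X_{n-1}$ with a wedge of copies of $G_+ \wedge S^n$, apply the adjunction of Lemma~\ref{lem: adjunction} to reduce to the non-equivariant groups $\pi^s_n(Y)$, and conclude via the long exact sequence of the cofiber sequence.
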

\begin{proof} 
This follows from a standard induction argument. Recall that a $G$-CW complex is obtained by attaching the $G$-cells $(G/H)\times D^{k}$ for various closed subgroups $H$ inductively over $k$. Since the $G$-action on $X$ is free away from the base point, all cells (except the base point) are of the form $G\times D^{k}$. Let $X_{k}$ be the $k$-skeleton of $X$. Then $$X_{k}/X_{k-1}=G_{+}\wedge (\mathop{\vee}\limits_{A(k)}  S^{k\mathbb{R}})$$ where $A(k)$ denotes the set of $k$-cells in $X$. For any $k\leq d$, by Lemma \ref{lem: adjunction}, we have 
$$
\{G_{+}\wedge (\mathop{\vee}\limits_{A(k)}  S^{k\mathbb{R}}),Y\}^{G}_{\mathcal{U}}\cong \{ \mathop{\vee}\limits_{A(k)}  S^{k\mathbb{R}},Y\}\cong 0.
$$
Therefore, we can inductively prove that $\{X_{k},Y\}^{G}_{\mathcal{U}}\cong 0$ using the long exact sequence 
$$
\cdots \rightarrow \{X_{k}/X_{k-1},Y\}^{G}_{\mathcal{U}}\rightarrow \{X_{k},Y\}^{G}_{\mathcal{U}}\rightarrow \{X_{k-1},Y\}^{G}_{\mathcal{U}}\rightarrow \cdots.
$$
induced by the cofiber sequence $X_{k-1}\rightarrow X_{k}\rightarrow X_{k}/X_{k-1}$.
\end{proof}

\begin{lemma}\label{lem: trivial bundle}
Let $X$ be a space with a free $G$-action and let $V$ be a $n$-dimensional $G$-representation. Consider the vector bundle $\widetilde{V}:=(V\times X)/G$ over the space $X/G$. Suppose $\widetilde{V}$ is a trivial bundle. Then there is a base-point preserving, $G$-equivariant homeomorphism from  $S^{V}\wedge X_{+}$ to $S^{n}\wedge X_{+}$.
\end{lemma}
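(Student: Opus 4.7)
The plan is to reinterpret both sides as Thom spaces of trivial vector bundles over $X$ and transport the given trivialization downstairs back upstairs along the free quotient map $p\colon X\to X/G$.

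First I would identify $S^{V}\wedge X_{+}$ with the fiberwise one-point compactification of the trivial rank-$n$ bundle $V\times X\to X$, namely $(S^{V}\times X)/(\{\infty\}\times X)$, where $G$ acts diagonally on $V\times X$. Similarly, $S^{n}\wedge X_{+}$ is the fiberwise one-point compactification of $\mathbb{R}^{n}\times X\to X$, where $G$ acts only on the $X$-factor. So it suffices to produce a $G$-equivariant, fiber-preserving linear isomorphism of vector bundles over $X$,
\[
\Phi\colon V\times X \xrightarrow{\;\cong\;} \mathbb{R}^{n}\times X,
\]
with the source carrying the diagonal action and the target the action that is trivial on $\mathbb{R}^{n}$; its fiberwise one-point compactification will automatically be basepoint-preserving and $G$-equivariant.

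To build $\Phi$, I would start from a (non-equivariant) trivialization $\psi\colon \widetilde{V}\xrightarrow{\cong}(X/G)\times \mathbb{R}^{n}$ and pull it back along $p$. Since the $G$-action on $X$ is free, the map $(v,x)\mapsto (x,[v,x])$ gives a canonical $G$-equivariant identification $V\times X \cong p^{*}\widetilde{V}$ of bundles over $X$, where the $G$-action on $p^{*}\widetilde{V}$ is induced from the action on the $X$-factor. Then $p^{*}\psi\colon p^{*}\widetilde{V}\xrightarrow{\cong} X\times \mathbb{R}^{n}$ is a bundle isomorphism, and composing gives the desired $\Phi$.

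The only point requiring verification is that $p^{*}\psi$ is $G$-equivariant when $G$ acts trivially on the $\mathbb{R}^{n}$-factor of the target. This comes for free from the pullback construction: $G$-orbits in $X$ collapse to single points in $X/G$, so the second coordinate of $p^{*}\psi(x,[v,x])$ depends only on $[v,x]\in \widetilde{V}$ and is thus unchanged if $(x,[v,x])$ is replaced by $(gx,[gv,gx])=(gx,[v,x])$. Since this is the core (and essentially only) content of the argument, there is no real obstacle; the lemma is genuinely a direct unwinding of definitions, which I would simply write out carefully.
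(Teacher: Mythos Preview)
Your proposal is correct and follows essentially the same argument as the paper: both construct the $G$-equivariant linear bundle isomorphism $V\times X\to \mathbb{R}^n\times X$ by pulling back the given trivialization of $\widetilde{V}$ along the quotient $X\to X/G$, and then pass to one-point/Thom-space compactifications. The paper phrases this concretely---writing $\rho_0\colon (V\times X)/G\to\mathbb{R}^n$ for the fiberwise-linear projection and setting $\rho(\vec v,x)=(\rho_1(\vec v,x),x)$ with $\rho_1=\rho_0\circ(\text{quotient})$---while you package the same construction in pullback language, but the content is identical.
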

\begin{proof}
Since $\widetilde{V}$ is a trivial bundle, there exists a map $\rho_{0}: (V\times X)/G\rightarrow \mathbb{R}^{n}$ which restricts to a linear isomorphism on any fiber. We compose this map with the quotient map $V\times X\rightarrow (V\times X)/G$ to get an $G$-equivariant map $\rho_{1}: V\times X\rightarrow \mathbb{R}^{n}$. Then we consider the $G$-equivariant homeomorphism 
$$
\rho: V\times X\rightarrow \mathbb{R}^{n}\times X
$$
defined by $\rho(\vec{v},x):= (\rho_{1}(\vec{v},x),x)$. After adding the base point at infinity, $\rho$ induces the homeomorphism from $S^{V}\wedge X_{+}$ to $S^{n}\wedge X_{+}$.
\end{proof}

\subsection{$\Pin$-equivariant stable homotopy groups}
From now on, we set $\mathcal{U}$ to be the incomplete $\Pin$-universe  $\mathbb{H}^{\infty}\oplus \widetilde{\mathbb{R}}^{\infty}\oplus \mathbb{R}^{\infty}$. From this universe, we can define four other universes 
$\operatorname{res}^{\Pin}_{S^{1}}(\mathcal{U})$, $\operatorname{res}^{\Pin}_{\{e\}}(\mathcal{U})$, $\mathcal{U}^{S^{1}}$ and $\mathcal{U}^{\Pin}$ .  We denote them by $\mathcal{U}_{1}$, $\mathcal{U}_{2}$, $\mathcal{U}_{3}$ and $\mathcal{U}_{4}$ respectively. The corresponding Lie groups are $S^{1}, \{e\}, C_{2}$ and $\{e\}$ respectively. Here $C_{2}=\Pin/S^{1}$ denotes the cyclic group of order 2. 

We will use $S(\mathbb{H}^{\infty})$ as a specific model for $E\Pin$. Then $B\Pin$ is the quotient of 
$BS^{1}=\mathbb{CP}^{\infty}$ under the involution 
$$
[(z_{0},z_{1},z_{2},z_{3},\cdots )]\rightarrow [(-\overline{z_{1}},\overline{z_{0}},-\overline{z_{3}},\overline{z_{2}},\cdots)]
.$$
We start with the following lemma:

\begin{lemma}\label{lem: computation of the kernel 1}
Let $X$ be a $\Pin$-CW complex. Then for any $k\geq 0$ the kernel of the map 
\begin{equation}\label{eq: S1-fixed point for Pin-space}
(-)_{S^{k},X,\Pin}^{S^{1}}: \pi^{\Pin}_{k,\mathcal{U}}(X)\rightarrow \pi^{C_{2}}_{k,\mathcal{U}_{3}}(X)
\end{equation}
is exactly the image of the injective map $\zeta^{\Pin}_{\{e\}}$. In particular, the kernel of the map (\ref{eq: S1-fixed point for Pin-space}) is isomorphic to the non-equivariant stable homotopy group $\pi_{k}^{s}((S^{\widetilde{\mathbb{R}}}\wedge E\Pin_{+}\wedge X)/\Pin)$.
\end{lemma}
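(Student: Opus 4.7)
The plan is to apply the tom Dieck splitting (Theorem \ref{thm: tom Dieck}) to both sides and analyze the restriction map $(-)^{S^{1}}$ summand by summand. In the $\Pin$-universe $\mathcal{U}=\mathbb{H}^{\infty}\oplus \widetilde{\mathbb{R}}^{\infty}\oplus \mathbb{R}^{\infty}$, the isotropy groups of points are exactly $\{e\}$ (from $\mathbb{H}^{\infty}\setminus 0$, on which $\Pin$ acts freely by left multiplication), $S^{1}$ (from $\widetilde{\mathbb{R}}^{\infty}\setminus 0$), and $\Pin$ itself (from $\mathbb{R}^{\infty}$), with Weyl groups $\Pin$, $C_{2}$ and $\{e\}$ respectively. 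For the $C_{2}$-universe $\mathcal{U}_{3}=\mathcal{U}^{S^{1}}=\widetilde{\mathbb{R}}^{\infty}\oplus \mathbb{R}^{\infty}$ the isotropy groups are $\{e\}$ and $C_{2}$. This yields the splittings
\begin{align*}
\pi^{\Pin}_{k,\mathcal{U}}(X) &\cong \pi^{\Pin}_{k,\mathcal{U}}(E\Pin_{+}\wedge X) \oplus \pi^{C_{2}}_{k,\mathcal{U}_{3}}((EC_{2})_{+}\wedge X^{S^{1}}) \oplus \pi^{s}_{k}(X^{\Pin}),\\
\pi^{C_{2}}_{k,\mathcal{U}_{3}}(X^{S^{1}}) &\cong \pi^{C_{2}}_{k,\mathcal{U}_{3}}((EC_{2})_{+}\wedge X^{S^{1}}) \oplus \pi^{s}_{k}(X^{\Pin}),
\end{align*}
each summand being the image of a corresponding $\zeta$-map.

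Using the explicit formula (\ref{eq: definition of zeta}), I would then verify that $(-)^{S^{1}}$ respects these decompositions. For $H=\{e\}$, a representative of $\zeta^{\Pin}_{\{e\}}[g]$ factors through $E\Pin_{+}=S(\mathbb{H}^{\infty})_{+}$, whose $S^{1}$-fixed set is just the basepoint; so the $S^{1}$-fixed map is constant and the whole $\{e\}$-summand lies in the kernel. For $H=S^{1}$ and $H=\Pin$, all the spaces appearing in (\ref{eq: definition of zeta}) already have trivial $S^{1}$-action (the subspace $V$ lies in $\mathcal{U}^{S^{1}}$, and $EC_{2}$, $X^{S^{1}}$, $X^{\Pin}$ are $S^{1}$-fixed), so $(-)^{S^{1}}$ leaves the formula unchanged and identifies $(-)^{S^{1}}\circ \zeta^{\Pin}_{H}$ with $\zeta^{C_{2}}_{\{e\}}$ and $\zeta^{C_{2}}_{C_{2}}$ respectively. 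The $C_{2}$-tom Dieck splitting then forces $(-)^{S^{1}}$ to carry the last two $\Pin$-summands isomorphically onto the two $C_{2}$-summands.

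It follows that the kernel of $(-)^{S^{1}}$ is exactly the first summand, namely the image of the injective map $\zeta^{\Pin}_{\{e\}}$. To identify this image concretely, I would apply the Adams isomorphism (\ref{eq: Adams' isomorphism}) to the free $\Pin$-space $E\Pin_{+}\wedge X$ to obtain
\[
\pi^{\Pin}_{k,\mathcal{U}}(E\Pin_{+}\wedge X)\;\cong\;\pi^{s}_{k}\bigl((S^{\operatorname{Ad}(\Pin)}\wedge E\Pin_{+}\wedge X)/\Pin\bigr).
\]
The adjoint action of $\Pin$ on its one-dimensional Lie algebra is $\widetilde{\mathbb{R}}$, since $S^{1}$ acts trivially while $j$ acts by $je^{i\theta}j^{-1}=e^{-i\theta}$, which induces $-1$ on the Lie algebra. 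Substituting $\operatorname{Ad}(\Pin)=\widetilde{\mathbb{R}}$ yields the claimed identification.

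The main technical obstacle I anticipate is the bookkeeping required to match the $\zeta$-formula for $\Pin$ with the $\zeta$-formula for $C_{2}$ under $(-)^{S^{1}}$, in particular carefully tracking that the universe $\mathcal{U}^{S^{1}}=\mathcal{U}_{3}$ on which $C_{2}$ acts is precisely the one used in the $C_{2}$-tom Dieck splitting of the target.
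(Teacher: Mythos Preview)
Your proposal is correct and follows essentially the same route as the paper: apply the tom Dieck splitting to both $\pi^{\Pin}_{k,\mathcal{U}}(X)$ and $\pi^{C_{2}}_{k,\mathcal{U}_{3}}(X^{S^{1}})$, verify via the explicit formula (\ref{eq: definition of zeta}) that $(-)^{S^{1}}$ kills the $\{e\}$-summand and is the identity on the $S^{1}$- and $\Pin$-summands, and then invoke the Adams isomorphism with $\operatorname{Ad}(\Pin)=\widetilde{\mathbb{R}}$. Your write-up is in fact more explicit than the paper's (which packages the compatibility check into a single commutative square and the phrase ``it is straightforward to see that this diagram commutes''), but the argument is the same.
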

\begin{proof}
We consider the following diagram

\begin{equation*}\label{diagram: tom Dieck}
\xymatrix{
\pi^{\Pin}_{k,\mathcal{U}}(E\Pin_{+}\wedge X)\oplus \pi^{C_{2}}_{k,\mathcal{U}_{3}}((EC_{2})_{+}\wedge X^{S^{1}})\oplus\pi_{k,\mathcal{U}_{4}}(X^{\Pin}) \ar[d]^{\pi} \ar[rrr]^-{\zeta^{\Pin}_{\{e\}}\oplus \zeta^{\Pin}_{S^{1}}\oplus \zeta^{\Pin}_{\Pin}} & & & \pi^{\Pin}_{k,\mathcal{U}}(X) \ar[d]^{(-)_{S^{k},X,\Pin}^{S^{1}}}\\
 \pi^{C_{2}}_{k,\mathcal{U}_{3}}((EC_{2})_{+}\wedge X^{S^{1}})\oplus\pi_{k,\mathcal{U}_{4}}(X^{\Pin})\ar[rrr]^-{\zeta^{C_{2}}_{C_{2}}\oplus \zeta^{C_{2}}_{\{e\}}} & & &\pi^{C_{2}}_{k,\mathcal{U}_{3}}(X^{S^{1}})},    
\end{equation*}
where $\pi$ is the projection to the second and third summand. By checking the explicit description of $\zeta^{G}_{H}$ in (\ref{eq: definition of zeta}), it is straightforward to see that this diagram commutes. Therefore, the kernel of $(-)_{S^{k},X,\Pin}^{S^{1}}$ is exactly the image of $\zeta^{G}_{\{e\}}$. We then apply (\ref{eq: Adams' isomorphism}) to finish the proof.
\end{proof}
Using the same argument, one can prove the following lemma. 
\begin{lemma}\label{lem: kernel of S1 fixed points 2}
Let $X$ be an $S^{1}$-CW complex. Then the kernel of the map $$(-)_{S^{k},X,S^{1}}^{S^{1}}: 
 \pi^{S^{1}}_{k,\mathcal{U}_{3}}(X)\rightarrow \pi^{s}_{k}(X^{S^{1}})
$$
equals the image of the injective map $\zeta^{S^{1}}_{\{e\}}: \pi^{S^{1}}_{k,\mathcal{U}_{3}}(ES^{1}_{+}\wedge X)\rightarrow \pi^{S^{1}}_{k,\mathcal{U}_{3}}(X)$. In particular, the kernel is isomorphic to $\pi^{s}_{k-1}((ES^{1}_{+}\wedge X)/S^{1})$.
\end{lemma}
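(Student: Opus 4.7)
The plan is to mirror the proof of Lemma~\ref{lem: computation of the kernel 1} with $\Pin$ replaced by $S^{1}$ throughout. First I would apply the tom Dieck splitting theorem (Theorem~\ref{thm: tom Dieck}) to the $S^{1}$-space $X$, where the relevant isotropy classes are $\{e\}$ and $S^{1}$, obtaining a natural isomorphism
\[
\zeta^{S^{1}}_{\{e\}}\oplus \zeta^{S^{1}}_{S^{1}}\colon\;\pi^{S^{1}}_{k,\mathcal{U}_{3}}(ES^{1}_{+}\wedge X)\oplus \pi^{s}_{k}(X^{S^{1}})\xrightarrow{\;\cong\;}\pi^{S^{1}}_{k,\mathcal{U}_{3}}(X).
\]
Here the first summand corresponds to $H=\{e\}$ (with Weyl group $W_{S^{1}}\{e\}=S^{1}$), and the second to $H=S^{1}$ (with trivial Weyl group, so the summand collapses to the non-equivariant stable homotopy group of $X^{S^{1}}$). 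In particular, $\zeta^{S^{1}}_{\{e\}}$ is a split injection.

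Next I would assemble the commutative diagram
\[
\xymatrix{
\pi^{S^{1}}_{k,\mathcal{U}_{3}}(ES^{1}_{+}\wedge X)\oplus \pi^{s}_{k}(X^{S^{1}}) \ar[d]_{\mathrm{pr}_{2}} \ar[rr]^-{\zeta^{S^{1}}_{\{e\}}\oplus \zeta^{S^{1}}_{S^{1}}} & & \pi^{S^{1}}_{k,\mathcal{U}_{3}}(X) \ar[d]^{(-)^{S^{1}}_{S^{k},X,S^{1}}} \\
\pi^{s}_{k}(X^{S^{1}}) \ar@{=}[rr] & & \pi^{s}_{k}(X^{S^{1}})
}
\]
where $\mathrm{pr}_{2}$ projects onto the second summand. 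Commutativity follows from unwinding~(\ref{eq: definition of zeta}): taking $S^{1}$-fixed points kills the $H=\{e\}$ contribution because $(ES^{1}_{+}\wedge X)^{S^{1}}$ collapses to the basepoint ($(ES^{1})^{S^{1}}=\emptyset$), while $\zeta^{S^{1}}_{S^{1}}$ is by construction the identity on the fixed-point data. Since the top horizontal arrow is an isomorphism by tom Dieck and the left vertical arrow is surjective with kernel equal to the first summand, the kernel of $(-)^{S^{1}}_{S^{k},X,S^{1}}$ is precisely the image of the injective map $\zeta^{S^{1}}_{\{e\}}$.

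Finally I would invoke the Adams' isomorphism~(\ref{eq: Adams' isomorphism}) with $G=S^{1}$ and $H=\{e\}$. The adjoint representation of $W_{S^{1}}\{e\}=S^{1}$ on its Lie algebra is the trivial one-dimensional representation, so the isomorphism reads
\[
\pi^{S^{1}}_{k,\mathcal{U}_{3}}(ES^{1}_{+}\wedge X)\;\cong\;\pi^{s}_{k}\!\bigl(S^{\mathbb{R}}\wedge (ES^{1}_{+}\wedge X)/S^{1}\bigr)\;=\;\pi^{s}_{k-1}\!\bigl((ES^{1}_{+}\wedge X)/S^{1}\bigr),
\]
which yields the claimed identification of the kernel. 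I do not expect any serious obstacle here; the only step requiring a little care is the verification of commutativity in the diagram, which is a direct translation of the corresponding step in Lemma~\ref{lem: computation of the kernel 1}, with the simplification that the $H=\Pin$ summand has disappeared and only two isotropy contributions remain.
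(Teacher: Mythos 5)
Your proof is correct and takes essentially the same route as the paper, which simply says that Lemma~\ref{lem: kernel of S1 fixed points 2} follows "using the same argument" as Lemma~\ref{lem: computation of the kernel 1}: you transport that proof verbatim from $\Pin$ to $S^{1}$, noting that the tom Dieck splitting now has only the two summands corresponding to isotropy classes $\{e\}$ and $S^{1}$, that $(-)^{S^{1}}$ annihilates the $\{e\}$-summand because $(ES^{1}_{+}\wedge X)^{S^{1}}$ is the basepoint, and that the Adams isomorphism with $\operatorname{Ad}(S^{1})=\mathbb{R}$ produces the desuspension $\pi^{s}_{k-1}((ES^{1}_{+}\wedge X)/S^{1})$. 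Each of these steps is verified correctly, and the identification of $\zeta^{S^{1}}_{S^{1}}$ as the inclusion-induced map making $(-)^{S^{1}}\circ\zeta^{S^{1}}_{S^{1}}$ the identity is exactly the point the paper leaves to the reader.
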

The following propositions (Proposition \ref{prop: algebraic vanishing result1}$\sim$ Proposition \ref{prop: algebraic vanishing result3}) are the key results in this section. They are the homotopy theoretic version of Theorem \ref{thm: generalized vanishing theorem}.
\begin{proposition}\label{prop: algebraic vanishing result1} Suppose $m,n\geq 0$ and are not both zero. Then the map $$(-)_{S^{1},S^{m\mathbb{H}+n\widetilde{\mathbb{R}}},\Pin}^{S^{1}}: \pi^{\Pin}_{1,\mathcal{U}}(S^{m\mathbb{H}+n\widetilde{\mathbb{R}}})\rightarrow \pi^{C_{2}}_{1,\mathcal{U}_{3}}(S^{n\widetilde{\mathbb{R}}})
$$ is injective.
\end{proposition}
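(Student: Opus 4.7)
The plan is to apply Lemma \ref{lem: computation of the kernel 1} to identify the kernel of the restriction map with the non-equivariant stable homotopy group $\pi_{1}^{s}$ of a Thom space over $B\Pin$, and then to conclude by a straightforward connectivity argument.

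First I would note that since $S^{1}\subset \Pin$ acts trivially on $\widetilde{\mathbb{R}}$ but has no nonzero fixed vectors in $\mathbb{H}$, the $S^{1}$-fixed point set of $X=S^{m\mathbb{H}+n\widetilde{\mathbb{R}}}$ is exactly $S^{n\widetilde{\mathbb{R}}}$, which matches the codomain of the map in question. Then Lemma \ref{lem: computation of the kernel 1} identifies the kernel with
\[
\pi_{1}^{s}\bigl((S^{\widetilde{\mathbb{R}}} \wedge E\Pin_{+} \wedge S^{m\mathbb{H}+n\widetilde{\mathbb{R}}})/\Pin\bigr).
\]
Combining the two representation spheres via associativity of the smash product, this becomes $\pi_{1}^{s}$ of
\[
(S^{(n+1)\widetilde{\mathbb{R}}+m\mathbb{H}} \wedge E\Pin_{+})/\Pin,
\]
which is the Thom space $\mathrm{Th}(\xi)$ of the rank $r = n+1+4m$ vector bundle
\[
\xi = E\Pin \times_{\Pin} \bigl((n+1)\widetilde{\mathbb{R}} \oplus m\mathbb{H}\bigr)
\]
over $B\Pin$.

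Next I would invoke the elementary fact that the Thom space of a rank-$r$ vector bundle over a pointed, path-connected CW complex is $(r-1)$-connected (its bottom cell sitting in dimension $r$), together with the observation that $\pi_{i}^{s}$ of a $k$-connected space vanishes for $i\leq k$. In our setting $r-1 = n+4m$, so $\mathrm{Th}(\xi)$ is $(n+4m)$-connected. The hypothesis that $m,n\geq 0$ are not both zero is precisely the inequality $n+4m\geq 1$, which then forces $\pi_{1}^{s}(\mathrm{Th}(\xi)) = 0$ and establishes the injectivity.

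There is no serious obstacle in this argument: Lemma \ref{lem: computation of the kernel 1} does the real work, and everything after it is bookkeeping plus a bottom-cell connectivity estimate. I would however remark that the hypothesis on $(m,n)$ is sharp, since in the degenerate case $m=n=0$ the bundle $\xi$ becomes the tautological real line bundle over $B\Pin$, whose Thom space is only $0$-connected, and $\pi_{1}^{s}$ of this Thom space need not vanish.
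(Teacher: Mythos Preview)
Your argument is correct and essentially identical to the paper's own proof: both apply Lemma \ref{lem: computation of the kernel 1} to identify the kernel with $\pi^{s}_{1}$ of the Thom space of $m\xi_{\mathbb{H}}\oplus (n+1)\xi_{\widetilde{\mathbb{R}}}$ over $B\Pin$, and then observe that this Thom space has no positive-dimensional cells below dimension $4m+n+1\geq 2$. The only cosmetic difference is that the paper phrases the last step as ``skeletal approximation'' while you phrase it as an $(r-1)$-connectivity statement; your closing remark on the sharpness of the hypothesis also anticipates the paper's treatment of the case $m=n=0$ in Proposition \ref{prop: algebraic vanishing result3}.
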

\begin{proposition}\label{prop: algebraic vanishing result2} 
Suppose $n>4m\geq 0$. Then the map  
$$(-)^{S^{1}}_{S^{\mathbb{R}+m\mathbb{H}},S^{n\widetilde{\mathbb{R}}},\Pin}:\{S^{\mathbb{R}+m\mathbb{H}},S^{n\widetilde{\mathbb{R}}}\}^{\Pin}_{\mathcal{U}}\rightarrow \pi^{C_{2}}_{1,\mathcal{U}_{3}}(S^{n\widetilde{\mathbb{R}}})$$ is injective. 
\end{proposition}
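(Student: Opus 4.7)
The plan is to reduce this to a single connectivity check for a Thom spectrum over $B\Pin$, in direct parallel with the argument used for Proposition \ref{prop: algebraic vanishing result1}. First I would rewrite the source inside the $\Pin$-equivariant stable category: desuspending both entries of the morphism group by $m\mathbb{H}$ yields the canonical identification
\[
\{S^{\mathbb{R}+m\mathbb{H}}, S^{n\widetilde{\mathbb{R}}}\}^{\Pin}_{\mathcal{U}} \cong \pi^{\Pin}_{1, \mathcal{U}}(S^{n\widetilde{\mathbb{R}} - m\mathbb{H}}),
\]
where $S^{n\widetilde{\mathbb{R}} - m\mathbb{H}}$ is the virtual representation sphere. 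Using $(m\mathbb{H})^{S^{1}} = 0$ and $(n\widetilde{\mathbb{R}})^{S^{1}} = n\widetilde{\mathbb{R}}$, under this identification the map of Proposition \ref{prop: algebraic vanishing result2} becomes the $S^{1}$-fixed-point map
\[
\pi^{\Pin}_{1, \mathcal{U}}(S^{n\widetilde{\mathbb{R}} - m\mathbb{H}}) \longrightarrow \pi^{C_{2}}_{1, \mathcal{U}_{3}}(S^{n\widetilde{\mathbb{R}}}),
\]
which is precisely the setup to which the argument of Lemma \ref{lem: computation of the kernel 1} applies.

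Next I would apply the spectrum-level analogue of Lemma \ref{lem: computation of the kernel 1}. Since the proof of that lemma uses only the tom Dieck splitting theorem and the Adams isomorphism, both of which are valid for $\Pin$-equivariant spectra, the same argument carries through with the space $X$ replaced by the spectrum $S^{n\widetilde{\mathbb{R}} - m\mathbb{H}}$. This identifies the kernel of the map above with
\[
\pi^{s}_{1}\bigl((S^{\widetilde{\mathbb{R}}} \wedge E\Pin_{+} \wedge S^{n\widetilde{\mathbb{R}} - m\mathbb{H}})/\Pin\bigr),
\]
which is the first stable homotopy group of the Thom spectrum over $B\Pin$ of the virtual vector bundle associated to the virtual representation $(n+1)\widetilde{\mathbb{R}} - m\mathbb{H}$.

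The proof then concludes with a connectivity estimate. Since $B\Pin$ is connected and this virtual bundle has rank $(n+1) - 4m$, the Thom spectrum is $(n - 4m)$-connected, so its $\pi^{s}_{1}$ vanishes exactly when $n > 4m$, which gives the desired injectivity. The only real obstacle is the upgrade from the space-level Lemma \ref{lem: computation of the kernel 1} to its spectrum-level form; this is routine once one accepts the tom Dieck splitting and the Adams isomorphism in the equivariant stable category, but it does take us slightly outside the purely space-level framework emphasized so far. As a space-level alternative, one may apply the cofiber sequence $S^{\mathbb{R}} \hookrightarrow S^{\mathbb{R}+m\mathbb{H}} \to S^{\mathbb{R}+m\mathbb{H}}/S^{\mathbb{R}}$, invoking Proposition \ref{prop: algebraic vanishing result1} on the fixed piece and Lemma \ref{lem: simple vanishing result} on the free quotient; this handles $n \geq 4m + 2$ cleanly, but runs into a top-cell obstruction exactly at $n = 4m + 1$, which is the boundary case where the Thom-spectrum formulation buys the extra dimension.
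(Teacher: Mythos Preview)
Your main argument via virtual spheres and the Thom spectrum over $B\Pin$ is correct and gives a clean, uniform treatment, but it is genuinely different from the paper's proof. The paper stays entirely at the space level: after dispatching $m=0$ by Proposition~\ref{prop: algebraic vanishing result1}, it runs the cofiber sequence $S^{0}\hookrightarrow S^{m\mathbb{H}}\to S^{\mathbb{R}}\wedge S(m\mathbb{H})_{+}$, shows that any $\beta$ in the kernel satisfies $\beta\cdot i_{0}=0$ (via Proposition~\ref{prop: algebraic vanishing result1}), and then uses Lemma~\ref{lem: simple vanishing result} to kill $\{S^{2\mathbb{R}}\wedge S(m\mathbb{H})_{+},S^{n\widetilde{\mathbb{R}}}\}^{\Pin}_{\mathcal{U}}$, forcing $\beta=0$. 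In other words, the paper's proof \emph{is} exactly your ``space-level alternative.''

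Your claim that this alternative ``runs into a top-cell obstruction exactly at $n=4m+1$'' is a miscount. The $\Pin$-action on $S(m\mathbb{H})$ is free and $\Pin$ is $1$-dimensional, so $S(m\mathbb{H})/\Pin$ has dimension $4m-2$; hence $S^{2\mathbb{R}}\wedge S(m\mathbb{H})_{+}$ has $\Pin$-CW dimension $4m$, not $4m+1$. Lemma~\ref{lem: simple vanishing result} then applies as soon as $\pi^{s}_{k}(S^{n})=0$ for $k\le 4m$, i.e.\ whenever $n>4m$, covering the boundary case $n=4m+1$ without difficulty. So the spectrum-level detour, while valid, is not needed to reach the full range; the space-level cofiber argument already does the job, and the paper's framework need not be upgraded.
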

\begin{proposition}\label{prop: algebraic vanishing result3}
Suppose $n=4m\geq 0$. Then the map 
$$(-)_{S^{\mathbb{R}+m\mathbb{H}},S^{n\widetilde{\mathbb{R}}},\Pin}^{S^{1}}: \{S^{\mathbb{R}+m\mathbb{H}},S^{n\widetilde{\mathbb{R}}}\}^{\Pin}_{\mathcal{U}}\rightarrow \pi^{C_{2}}_{1,\mathcal{U}_{3}}(S^{n\widetilde{\mathbb{R}}})
$$
has a kernel isomorphic to $\mathbb{Z}/2$. Furthermore, let $\alpha$ be the nonzero element in this kernel. Then we have $\operatorname{res}_{S^{1}}^{\Pin}(\alpha)=0$.
\end{proposition}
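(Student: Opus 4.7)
The plan is to extend Lemma \ref{lem: computation of the kernel 1} from a source $S^{k}$ to the representation source $S^{\mathbb{R}+m\mathbb{H}}$: running the same diagram chase that gives Lemma \ref{lem: computation of the kernel 1}, the tom Dieck splitting identifies the kernel of $(-)^{S^{1}}_{S^{\mathbb{R}+m\mathbb{H}},S^{n\widetilde{\mathbb{R}}},\Pin}$ with the image of $\zeta^{\Pin}_{\{e\}}$. Via the Adams isomorphism and the free $\Pin$-action on $E\Pin$, this image identifies with $\pi^{s}_{1}(T_{\Pin})$, where $T_{\Pin}=(B\Pin)^{\xi}$ is the Thom spectrum over $B\Pin$ of the virtual bundle $\xi=(n+1)\ell\ominus\underline{\mathbb{R}}\ominus m\eta$; here $\ell=\widetilde{\mathbb{R}}\times_{\Pin}E\Pin$ is the orientation line bundle of $B\Pin$ and $\eta=\mathbb{H}\times_{\Pin}E\Pin$ is the rank-$4$ bundle associated to $\mathbb{H}$. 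When $n=4m$, $\xi$ has virtual rank $0$ and first Stiefel--Whitney class $w_{1}(\xi)=(n+1)w_{1}(\ell)=u\neq 0$, since $\eta$ is oriented.

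Next I would compute $\pi^{s}_{1}(T_{\Pin})$. In the base case $m=n=0$, $T_{\Pin}=(B\Pin)^{\ell}$, and the sphere-bundle cofiber sequence $BS^{1}_{+}\to B\Pin_{+}\to (B\Pin)^{\ell}$ (coming from $S(\ell)=BS^{1}$, $D(\ell)\simeq B\Pin$) gives a long exact sequence in stable homotopy. Using $\widetilde{\pi}^{s}_{1}(BS^{1})=0$ (no odd cells in $\mathbb{CP}^{\infty}$), $\widetilde{\pi}^{s}_{1}(B\Pin)=H_{1}(B\Pin;\mathbb{Z})=\pi_{0}(\Pin)=\mathbb{Z}/2$ (edge of the AHSS), and that $\pi^{s}_{0}(BS^{1}_{+})\to\pi^{s}_{0}(B\Pin_{+})$ is an isomorphism $\mathbb{Z}\to\mathbb{Z}$, one reads off $\pi^{s}_{1}((B\Pin)^{\ell})\cong\mathbb{Z}/2$. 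For general $m\geq 0$ with $n=4m$, the twisted Thom isomorphism (twisting by the sign local system $\mathbb{Z}^{u}$) gives $\pi^{s}_{1}(T_{\Pin})\cong H_{1}(B\Pin;\mathbb{Z}^{u})$ in low degrees, and a direct group-homology computation via the fibration $BS^{1}\to B\Pin\to BC_{2}$ together with $H_{1}(C_{2};\mathbb{Z}^{u})=\mathbb{Z}/2$ confirms this is again $\mathbb{Z}/2$, establishing the order of the kernel.

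For the vanishing $\operatorname{res}^{\Pin}_{S^{1}}(\alpha)=0$: naturality of the splitting places $\operatorname{res}^{\Pin}_{S^{1}}(\alpha)$ in the kernel of the corresponding $S^{1}$-equivariant fixed-point map, which by the analog of Lemma \ref{lem: kernel of S1 fixed points 2} is $\pi^{s}_{0}(T_{S^{1}})$, where $T_{S^{1}}$ is the Thom spectrum over $BS^{1}$ of the restriction of $\xi$. Since $BS^{1}$ is simply connected, $\widetilde{\mathbb{R}}$ pulls back to a trivial representation, and $\mathbb{H}|_{S^{1}}\cong L\oplus L$ (where $L$ is the tautological complex line), so $T_{S^{1}}\simeq (BS^{1})^{4m-2mL}$ is the Thom spectrum of a rank-$0$ complex virtual bundle over $\mathbb{CP}^{\infty}$. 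Since $H_{*}(BS^{1};\mathbb{Z})$ is torsion-free and concentrated in even degrees, the AHSS yields $\pi^{s}_{0}(T_{S^{1}})\cong\mathbb{Z}$, which is torsion-free. The naturality-induced map $\pi^{s}_{1}(T_{\Pin})\to\pi^{s}_{0}(T_{S^{1}})$ is therefore a homomorphism $\mathbb{Z}/2\to\mathbb{Z}$, which must be zero; this forces $\operatorname{res}^{\Pin}_{S^{1}}(\alpha)=0$.

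The main obstacle will be justifying the twisted Thom isomorphism carefully for $m>0$ and verifying the naturality square comparing the $\{e\}$-summands of the tom Dieck splittings for $\Pin$ and $S^{1}$; in particular, identifying the induced map on the associated Thom spectra with the correct functoriality along the double cover $BS^{1}\to B\Pin$.
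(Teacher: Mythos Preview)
Your treatment of the base case $m=n=0$ is essentially the paper's: both identify the kernel with $\pi^s_1$ of the Thom space of $\ell$ over $B\Pin$ and read off $\mathbb{Z}/2$, and both kill $\operatorname{res}^{\Pin}_{S^1}(\alpha)$ by observing that the $S^1$-side kernel is torsion-free.

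For $m>0$ there are two genuine problems.

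\textbf{The extension of Lemma~\ref{lem: computation of the kernel 1} is not justified.} The tom Dieck splitting (Theorem~\ref{thm: tom Dieck}) and the diagram in Lemma~\ref{lem: computation of the kernel 1} concern $\pi^G_k(X)$, i.e.\ maps out of a \emph{trivial} sphere $S^k$; equivalently, they apply to suspension spectra. For $m>0$ the source $S^{\mathbb{R}+m\mathbb{H}}$ carries a nontrivial $\Pin$-action, and $\{S^{\mathbb{R}+m\mathbb{H}},S^{n\widetilde{\mathbb{R}}}\}^{\Pin}_{\mathcal{U}}=\pi^{\Pin}_1(S^{n\widetilde{\mathbb{R}}-m\mathbb{H}})$ is the homotopy of a non-suspension spectrum, so ``the same diagram chase'' does not run. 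One can try the isotropy separation sequence $E\Pin_+\to S^0\to\widetilde{E\Pin}$ instead, but then you must prove separately that the map from the $E\Pin_+$-term is injective and that the $\widetilde{E\Pin}$-term is identified with $\pi^{C_2}_{1,\mathcal{U}_3}(S^{n\widetilde{\mathbb{R}}})$ via $(-)^{S^1}$; neither is automatic.

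\textbf{The twisted computation is wrong.} You claim $H_1(C_2;\mathbb{Z}^u)=\mathbb{Z}/2$, but for the sign module $\mathbb{Z}^-$ one has $\sigma-1=-2$ injective on $\mathbb{Z}$, hence $H_1(C_2;\mathbb{Z}^-)=0$; the Serre spectral sequence of $BS^1\to B\Pin\to BC_2$ then gives $H_1(B\Pin;\mathbb{Z}^u)=0$. So the asserted identification $\pi^s_1(T_{\Pin})\cong H_1(B\Pin;\mathbb{Z}^u)$ fails. (In the Atiyah--Hirzebruch spectral sequence the $\mathbb{Z}/2$ in fact sits in $E_2^{0,1}=H_0(T_{\Pin};\pi^s_1)$, not in $E_2^{1,0}$, so even the correct answer comes from a different filtration piece and requires a differential check you have not done.)

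The paper sidesteps both issues for $m>0$ by \emph{not} extending Lemma~\ref{lem: computation of the kernel 1}. Instead it uses the cofiber sequence $S^0\to S^{m\mathbb{H}}\to S^{\mathbb{R}}\wedge S(m\mathbb{H})_+$: for $\beta$ in the kernel, the composite $\beta\cdot i_0$ lands in the kernel of the $m=0$ map, which vanishes by Proposition~\ref{prop: algebraic vanishing result1}, so $\beta$ is in the image of $q^*$ from $\{S^{2\mathbb{R}}\wedge S(m\mathbb{H})_+,S^{4m\widetilde{\mathbb{R}}}\}^{\Pin}_{\mathcal{U}}$. Lemmas~\ref{lem: gamma is trivial when restriected to S1} and~\ref{lem: q* injective} then show this last group is $\mathbb{Z}/2$, that $q^*$ is injective, and that the generator has trivial $S^1$-restriction.
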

\begin{proposition}\label{prop: algebraic vanishing result4} The map 
$$(-)_{S^{2\mathbb{R}},S^{0},\Pin}^{S^{1}}: \pi^{\Pin}_{2,\mathcal{U}}(S^{0})\rightarrow \pi^{C_{2}}_{2,\mathcal{U}_{3}}(S^{0})
$$
has a kernel isomorphic to $\mathbb{Z}/2$. Furthermore, let $\alpha'$ be the nonzero element in this kernel. Then we have $\operatorname{res}_{S^{1}}^{\Pin}(\alpha')=0$.
\end{proposition}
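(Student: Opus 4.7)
The plan is to follow the same strategy used for Propositions \ref{prop: algebraic vanishing result1}--\ref{prop: algebraic vanishing result3}, namely to reduce via the tom Dieck splitting and Lemma \ref{lem: computation of the kernel 1} to a non-equivariant stable homotopy calculation. Applying Lemma \ref{lem: computation of the kernel 1} with $X = S^0$ and $k = 2$, the kernel of $(-)^{S^1}_{S^{2\mathbb{R}}, S^0, \Pin}$ is canonically isomorphic to $\pi^s_2(Y)$, where $Y := (S^{\widetilde{\mathbb{R}}} \wedge E\Pin_+)/\Pin$. Since $S^1 \subset \Pin$ acts trivially on $\widetilde{\mathbb{R}}$ and freely on $E\Pin = S(\mathbb{H}^{\infty})$, first quotienting by $S^1$ rewrites this as $Y = (S^{\widetilde{\mathbb{R}}} \wedge \mathbb{CP}^{\infty}_+)/C_2$, which I identify with the Thom space $\mathrm{Th}(\ell)$ of the unique nontrivial real line bundle $\ell$ over $B\Pin = \mathbb{CP}^{\infty}/C_2$.

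To compute $\pi^s_2(\mathrm{Th}(\ell))$, I would use the cofiber sequence
\[
BS^1_+ \to B\Pin_+ \to \mathrm{Th}(\ell)
\]
arising from the sphere-disk decomposition of $\ell$ (the unit sphere bundle of $\ell$ being the double cover $BS^1 \to B\Pin$). The resulting long exact sequence in stable homotopy reduces the problem to computing $\pi^s_*(BS^1_+)$ and $\pi^s_*(B\Pin_+)$ in degrees $1$ and $2$. The former is immediate from $BS^1 = \mathbb{CP}^{\infty}$, and the latter follows from the Atiyah--Hirzebruch spectral sequence, using the homology of $B\Pin$ computable, e.g., from the fibration $\mathbb{RP}^2 \to B\Pin \to \mathbb{HP}^{\infty}$ coming from the inclusion $\Pin \hookrightarrow SU(2)$. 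A careful accounting should show that the map $\pi^s_1(BS^1_+) \to \pi^s_1(B\Pin_+)$ is injective (forcing the connecting map $\pi^s_2(\mathrm{Th}(\ell)) \to \pi^s_1(BS^1_+)$ to vanish), and hence $\pi^s_2(\mathrm{Th}(\ell))$ is the cokernel of $\pi^s_2(BS^1_+) \to \pi^s_2(B\Pin_+)$, which equals $\mathbb{Z}/2$. This gives the unique nonzero element $\alpha' = \zeta^{\Pin}_{\{e\}}(\beta)$ for $\beta$ the generator.

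Finally, for the assertion $\operatorname{res}^{\Pin}_{S^1}(\alpha') = 0$, I would compare the tom Dieck splittings for $\Pin$ and for $S^1$ applied to $\pi_{2,\mathcal{U}_1}^{S^1}(S^0)$. By naturality of $\zeta^G_H$ under restriction of groups, on the free-orbit summand where $\alpha'$ lives, the composition $\operatorname{res}^{\Pin}_{S^1} \circ \zeta^{\Pin}_{\{e\}}$ factors through a map $\mathrm{Th}(\ell) \to \Sigma BS^1_+$ induced by the double cover $BS^1 \to B\Pin$ (noting that $\widetilde{\mathbb{R}}|_{S^1}$ is trivial, so the corresponding Thom space collapses to a suspension). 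Chasing the generator of $\pi^s_2(\mathrm{Th}(\ell))$ through the cofiber sequence of the previous step shows that it maps to zero in $\pi^s_1(BS^1_+) = \pi^s_2(\Sigma BS^1_+) \cong \mathbb{Z}/2$, establishing the claim. The main obstacle will be the middle step: precisely controlling the map $\pi^s_2(BS^1_+) \to \pi^s_2(B\Pin_+)$ induced by the inclusion of the double cover, together with any potentially nontrivial Atiyah--Hirzebruch differentials and extensions contributing to $\pi^s_2(B\Pin)$; the compatibility of the tom Dieck splittings with restriction also requires careful bookkeeping but should be routine given the formula (\ref{eq: definition of zeta}) for $\zeta^G_H$.
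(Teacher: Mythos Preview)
Your approach and the paper's both begin by invoking Lemma \ref{lem: computation of the kernel 1} to identify the kernel with $\pi^s_2(\mathrm{Th}(B\Pin, \xi_{\widetilde{\mathbb{R}}}))$, but then diverge sharply. The paper observes (as already noted in the proof of Proposition \ref{prop: algebraic vanishing result3}) that the $4$-skeleton of this Thom space is $\mathbb{RP}^3$, so by skeletal approximation $\pi^s_2(\mathrm{Th}) \cong \pi^s_2(\mathbb{RP}^3)$, which is computed to be $\mathbb{Z}/2$ via the stable splitting $\mathbb{RP}^3 \simeq S^3 \vee \mathbb{RP}^2$ and the cofiber sequence $S^1 \xrightarrow{2} S^1 \to \mathbb{RP}^2$. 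The real payoff of this computation is that multiplication by the Hopf element $\eta$ gives an isomorphism $\pi^s_1(\mathbb{RP}^3) \xrightarrow{\cong} \pi^s_2(\mathbb{RP}^3)$, so the generator is $\alpha' = \eta \cdot \alpha$ where $\alpha$ is precisely the element from Proposition \ref{prop: algebraic vanishing result3}. Since $\eta$-multiplication commutes with both the Adams isomorphism and restriction, $\operatorname{res}^{\Pin}_{S^1}(\alpha') = \eta \cdot \operatorname{res}^{\Pin}_{S^1}(\alpha) = 0$ follows in one line from that proposition. This reduction to the $k=1$ case via $\eta$-multiplication is the key shortcut you are missing.

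Your route through the cofiber sequence $BS^1_+ \to B\Pin_+ \to \mathrm{Th}(\ell)$ can be made to work, but the ``main obstacle'' you flag is real: the cokernel computation hinges on showing that the double cover $\mathbb{CP}^1 \to \mathbb{RP}^2$ (the induced map on $2$-skeleta) represents the zero class in $\pi^s_2(\mathbb{RP}^2) \cong \mathbb{Z}/2$, and this is a separate fact you would need to establish. Likewise your proposed treatment of $\operatorname{res}^{\Pin}_{S^1}$ via naturality of the tom Dieck splittings under restriction is correct in outline but requires carefully tracking the Adams isomorphism and the map $\zeta^{S^1}_{\{e\}}$, whereas the paper's identification $\alpha' = \eta \cdot \alpha$ makes both halves of the proposition an immediate corollary of work already done in Proposition \ref{prop: algebraic vanishing result3}.
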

\begin{proof}[Proof of Proposition \ref{prop: algebraic vanishing result1}] By Lemmma \ref{lem: computation of the kernel 1}, we have 
\begin{equation}\label{eq: ker to nonequivriant}
\ker (-)^{S^{1}}_{S^{1},S^{m\mathbb{H}+n\widetilde{\mathbb{R}}},\Pin}\cong \pi^{s}_{1}((S^{m\mathbb{H}+(n+1)\widetilde{\mathbb{R}}}\wedge E\Pin_{+})/\Pin).
\end{equation}

Let $\xi_{\widetilde{\mathbb{R}}}$ (resp. $\xi_{\mathbb{H}}$) the be bundle associated to the universal bundle $\Pin\hookrightarrow E\Pin\rightarrow B\Pin$ and the representation $\mathbb{R}$ (resp. $\mathbb{H}$). Then we have  $$(S^{m\mathbb{H}+(n+1)\widetilde{\mathbb{R}}}\wedge E\Pin_{+})/\Pin=\operatorname{Th}(B\Pin,m\xi_{\mathbb{H}}\oplus (n+1)\xi_{\widetilde{\mathbb{R}}}).$$ Here $\operatorname{Th}(-)$ denotes the Thom space. 

Suppose $m,n\geq 0$ are not both zero. Then 
$m\xi_{\mathbb{H}}\oplus (n+1)\xi_{\widetilde{\mathbb{R}}}$ is a vector bundle of dimension $\geq 2$. Therefore, other than the base point, the space $\operatorname{Th}(B\Pin,m\xi_{\mathbb{H}}\oplus (n+1)\xi_{\widetilde{\mathbb{R}}})$ has no cells in dimension $0$ and $1$. By the skeletal approximation theorem, we have  
$$\ker (-)^{S^{1}}_{S^{1},S^{m\mathbb{H}+n\widetilde{\mathbb{R}}},\Pin}\cong \pi^{s}_{1}(\operatorname{Th}(B\Pin,m\xi_{\mathbb{H}}\oplus (n+1)\xi_{\widetilde{\mathbb{R}}}))\cong 0.$$\end{proof}

\begin{proof}[Proof of Proposition \ref{prop: algebraic vanishing result2}] The case $m=0$ has been covered by Proposition \ref{prop: algebraic vanishing result1} so we assume $m\geq 1$, which implies that  $n\geq 5$. Take $$\beta\in \ker ((-)_{S^{\mathbb{R}+m\mathbb{H}},S^{n\widetilde{\mathbb{R}}},\Pin}^{S^{1}}).$$ Take the inclusion $i_{0}: S^{\mathbb{R}}\rightarrow S^{\mathbb{R}+m\mathbb{H}}$. We also use $i_{0}$ to denote the corresponding element in  $\{S^{\mathbb{R}},S^{\mathbb{R}+m\mathbb{H}}\}^{\Pin}_{\mathcal{U}}$.  Then we have 
\begin{equation}
(-)_{S^{\mathbb{R}},S^{n\widetilde{\mathbb{R}}},\Pin}^{S^{1}}(\beta\cdot i_{0})=(-)_{S^{\mathbb{R}+m\mathbb{H}},S^{n\widetilde{\mathbb{R}}},\Pin}^{S^{1}}(\beta)\cdot (-)_{S^{\mathbb{R}},S^{\mathbb{R}+m\mathbb{H}},\Pin}^{S^{1}}(i_{0})=0. 
\end{equation} 
But $\ker ((-)_{S^{\mathbb{R}},S^{n\widetilde{\mathbb{R}}},\Pin}^{S^{1}})$    
is trivial by (1). Hence we see that $\beta\cdot i_{0}=0$. 

Next, we consider the long exact sequence 
$$
\cdots\rightarrow \{S^{2\mathbb{R}}\wedge (S(m\mathbb{H})_{+}),S^{n\widetilde{\mathbb{R}}}\}^{\Pin}_{\mathcal{U}}\rightarrow \{S^{\mathbb{R}+m\mathbb{H}},S^{n\widetilde{\mathbb{R}}}\}^{\Pin}_{\mathcal{U}}\xrightarrow{-\cdot i_{0} }  \{S^{\mathbb{R}},S^{n\widetilde{\mathbb{R}}}\}^{\Pin}_{\mathcal{U}}\rightarrow \cdots
$$
induced by the cofiber sequence $S^{0}\rightarrow S^{m\mathbb{H}}\rightarrow S^{\mathbb{R}}\wedge S(m\mathbb{H})_{+}$. Note that $\Pin$ acts freely on $S^{2\mathbb{R}}\wedge (S(m\mathbb{H})_{+})$ away from the base point, and the quotient is a CW complex of dimension $4m$. Since $n>4m$, by Lemma \ref{lem: simple vanishing result}, we see that $\{S^{2\mathbb{R}}\wedge (S(m\mathbb{H})_{+}),S^{n\widetilde{\mathbb{R}}}\}^{\Pin}_{\mathcal{U}}\cong 0$. Hence the map $-\cdot i_{0}$ is injective. Since $\beta\cdot i_{0}=0$, we get $\beta=0$.
\end{proof}

We need some preparations in order to prove Proposition \ref{prop: algebraic vanishing result3}.
\begin{lemma}\label{lem: gamma is trivial when restriected to S1}
(1) For $m>0$, we have $\{S^{2\mathbb{R}}\wedge S(m\mathbb{H})_{+},S^{4m\widetilde{\mathbb{R}}}\}^{\Pin}_{\mathcal{U}}\cong \mathbb{Z}/2$. 
(2) Let $\gamma$ be the unique nonzero element in $\{S^{2\mathbb{R}}\wedge S(m\mathbb{H})_{+},S^{4m\widetilde{\mathbb{R}}}\}$. Then $\operatorname{res}_{S^{1}}^{\Pin}(\gamma)=0$. 
\begin{proof} (1) The $\Pin$-action on $S^{2\mathbb{R}}\wedge S((m-1)\mathbb{H})_{+}$ is free away from the base point and the quotient is a CW-complex of dimension $4m-4$. By Lemma \ref{lem: simple vanishing result}, we have 
$$
\{S^{2\mathbb{R}}\wedge S((m-1)\mathbb{H})_{+},S^{4m\widetilde{\mathbb{R}}}\}^{\Pin}_{\mathcal{U}}\cong \{S^{3\mathbb{R}}\wedge S((m-1)\mathbb{H})_{+},S^{4m\widetilde{\mathbb{R}}}\}^{\Pin}_{\mathcal{U}}\cong 0.
$$
Consider the cofiber sequence $$S((m-1)\mathbb{H})_{+}\hookrightarrow S(m\mathbb{H})_{+}\rightarrow S(m\mathbb{H})/S((m-1)\mathbb{H}).$$ Since $S(m\mathbb{H})/S((m-1)\mathbb{H})=S^{(m-1)\mathbb{H}}\wedge (S(\mathbb{H})_{+})$, we have the induced long exact sequence of homotopy groups $$\{S^{2\mathbb{R}}\wedge S(m\mathbb{H})_{+},S^{4m\widetilde{\mathbb{R}}}\}^{\Pin}_{\mathcal{U}}\cong \{S^{2\mathbb{R}+(m-1)\mathbb{H}}\wedge (S(\mathbb{H})_{+}),S^{4m\widetilde{\mathbb{R}}}\}^{\Pin}_{\mathcal{U}}.$$
To compute this group, we consider the vector bundles $\xi'_{\mathbb{H}}$ (resp. $\xi'_{4\widetilde{\mathbb{R}}}$) associated to the principal bundle $$\Pin\hookrightarrow S(\mathbb{H})\rightarrow \mathbb{RP}^{2}$$ for the representations $\mathbb{H}$ (resp.  $4\widetilde{\mathbb{R}}$). Since the structure group of both bundles can be lifted to $SU(2)$ and any $SU(2)$ bundles is trivial on a 2-dimensional CW complex. We see that $\xi'_{\mathbb{H}}$ and $\xi'_{4\widetilde{\mathbb{R}}}$ are both trivial. By Lemma \ref{lem: trivial bundle}, we have the following $\Pin$-equivariant homeomorphisms 
$$
S^{4\mathbb{R}+(m-1)\mathbb{H}}\wedge (S(\mathbb{H})_{+})\cong S^{4m\mathbb{R}}\wedge (S(\mathbb{H})_{+})\cong S^{4m\widetilde{\mathbb{R}}}\wedge (S(\mathbb{H})_{+}).
$$
They give the following isomorphism 
\begin{equation*}
\begin{split}
\{S^{2\mathbb{R}+(m-1)\mathbb{H}}\wedge (S(\mathbb{H})_{+}),S^{4m\widetilde{\mathbb{R}}}\}^{\Pin}_{\mathcal{U}}&\cong  \{S^{4\mathbb{R}+(m-1)\mathbb{H}}\wedge (S(\mathbb{H})_{+}),S^{4m\widetilde{\mathbb{R}}+2\mathbb{R}}\}^{\Pin}_{\mathcal{U}} \\ &\cong \{S^{4m\widetilde{\mathbb{R}}}\wedge (S(\mathbb{H})_{+}),S^{4m\widetilde{\mathbb{R}}+2\mathbb{R}}\}^{\Pin}_{\mathcal{U}}\\
&\cong \{S(\mathbb{H})_{+}, S^{2\mathbb{R}} \}^{\Pin}_{\mathcal{U}}\\
&\cong \{\mathbb{RP}^{2}_{+},S^{2}\} \\&\cong \mathbb{Z}/2.
\end{split}
\end{equation*}
(2) The element $\operatorname{res}^{\Pin}_{S^{1}}(\gamma)$ belongs to the group $$\{S^{2\mathbb{R}}\wedge S(m\mathbb{H})_{+},S^{4m\widetilde{\mathbb{R}}}\}^{S^{1}}_{\mathcal{U}_{3}}\cong \{ S(\mathbb{H})_{+},S^{(4m-2)\widetilde{\mathbb{R}}}\}^{S^{1}}_{\mathcal{U}_{3}}.$$
Since $S^{1}$ acts freely on $S(m\mathbb{H})$ and trivially on $S^{(4m-2)\widetilde{\mathbb{R}}}$, we have 
$$
\{ S(m\mathbb{H})_{+},S^{(4m-2)\widetilde{\mathbb{R}}}\}^{S^{1}}_{\mathcal{U}_{3}}\cong \{ (S(m\mathbb{H})_{+})/S^{1},S^{(4m-2)\widetilde{\mathbb{R}}}\}\cong \{\mathbb{CP}^{2m-1}_{+},S^{4m-2}\}\cong \mathbb{Z}.
$$
Since $\gamma$ is 2-torsion, $\operatorname{res}^{\Pin}_{S^{1}}(\gamma)$ must be zero.
\end{proof}
\begin{lemma}\label{lem: q* injective}
For $m>0$, the map 
\begin{equation}
    q^{*}: \{S^{2\mathbb{R}}\wedge (S(m\mathbb{H})_{+}),S^{4m\widetilde{\mathbb{R}}}\}^{\Pin}_{\mathcal{U}}\rightarrow \{S^{\mathbb{R}+m\mathbb{H}}, S^{4m\widetilde{\mathbb{R}}}\}^{\Pin}_{\mathcal{U}}\end{equation}
    induced by the quotient map $q: S^{m\mathbb{H}}\rightarrow S^{m\mathbb{H}}/S^{0}=S^{\mathbb{R}}\wedge (S(m\mathbb{H})_{+})$ is injective. \end{lemma}
\end{lemma}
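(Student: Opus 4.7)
The plan is to exploit the cofiber sequence that arises from the inclusion $S^{0} = (S^{m\mathbb{H}})^{\Pin} \hookrightarrow S^{m\mathbb{H}}$ of the $\Pin$-fixed points. Explicitly, we have the Puppe sequence
\[
S^{0} \xrightarrow{i} S^{m\mathbb{H}} \xrightarrow{q} S^{\mathbb{R}} \wedge S(m\mathbb{H})_{+} \xrightarrow{\delta} S^{\mathbb{R}} \xrightarrow{\Sigma i} S^{\mathbb{R}+m\mathbb{H}} \to \cdots
\]
in which $\delta$ is (up to canonical identification) the suspension of the collapse $c:S(m\mathbb{H})_{+}\to S^{0}$. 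Applying the contravariant functor $\{-,S^{4m\widetilde{\mathbb{R}}}\}^{\Pin}_{\mathcal{U}}$ and suspending once by $S^{\mathbb{R}}$ produces the long exact sequence
\[
\{S^{2\mathbb{R}}, S^{4m\widetilde{\mathbb{R}}}\}^{\Pin}_{\mathcal{U}} \xrightarrow{(\Sigma\delta)^{*}} \{S^{2\mathbb{R}}\wedge S(m\mathbb{H})_{+}, S^{4m\widetilde{\mathbb{R}}}\}^{\Pin}_{\mathcal{U}} \xrightarrow{q^{*}} \{S^{\mathbb{R}+m\mathbb{H}}, S^{4m\widetilde{\mathbb{R}}}\}^{\Pin}_{\mathcal{U}}.
\]
By exactness, $\ker q^{*}$ coincides with the image of $(\Sigma\delta)^{*}$, so the lemma reduces to showing $(\Sigma\delta)^{*}=0$.

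For this, observe that any element in the image of $(\Sigma\delta)^{*}$ has the form $g\circ\Sigma^{2}c$ where $g:S^{2\mathbb{R}}\to S^{4m\widetilde{\mathbb{R}}}$ is a $\Pin$-equivariant stable map; such an element factors through the quotient $\Sigma^{2}c: S^{2\mathbb{R}}\wedge S(m\mathbb{H})_{+}\to S^{2\mathbb{R}}$ that crushes the $S(m\mathbb{H})$-factor. My plan is to trace this factorization through the chain of isomorphisms provided by Lemma~\ref{lem: gamma is trivial when restriected to S1}(1), which identifies the target group with $\{\mathbb{RP}^{2}_{+},S^{2}\}\cong \mathbb{Z}/2$. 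Under that chain, the submodule of maps that are constant in the $S(m\mathbb{H})$-direction is carried to the submodule of maps $\mathbb{RP}^{2}_{+}\to S^{2}$ that factor through the collapse $\mathbb{RP}^{2}_{+}\to S^{0}$; but this submodule lies in the image of $\{S^{0},S^{2}\}=\pi^{s}_{-2}(S^{0})=0$. Hence $(\Sigma\delta)^{*}$ is the zero map, and $q^{*}$ is injective as desired.

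The main obstacle is justifying that the chain of isomorphisms appearing in the proof of Lemma~\ref{lem: gamma is trivial when restriected to S1}(1) is natural enough in the source variable that the ``factors through $S^{2\mathbb{R}}$'' structure is carried to the ``factors through $S^{0}$'' structure in $\{\mathbb{RP}^{2}_{+},S^{2}\}$. Concretely, at each step of the chain (the reduction via the cofiber sequence $S((m-1)\mathbb{H})_{+}\to S(m\mathbb{H})_{+}\to S^{(m-1)\mathbb{H}}\wedge S(\mathbb{H})_{+}$, the suspension isomorphisms, the $\Pin$-equivariant trivializations from Lemma~\ref{lem: trivial bundle}, and the Adams-type cancellation identifying the resulting group with $\{\mathbb{RP}^{2}_{+},S^{2}\}$) one must verify that the distinguished factorization through the collapse is preserved. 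Alternatively, one can sidestep this naturality issue by instead proving the dual statement that $(\Sigma^{2}i)^{*}$ is surjective, using the $\Pin$-equivariant tom Dieck splitting of $\pi^{\Pin}_{2,\mathcal{U}}(S^{4m\widetilde{\mathbb{R}}})$ into its $\{e\}$, $S^{1}$, and $\Pin$-summands and constructing an explicit lift for each.
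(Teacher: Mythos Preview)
Your setup is identical to the paper's: you use the same Puppe sequence for $S^{0}\hookrightarrow S^{m\mathbb{H}}$, obtain the same long exact sequence, and correctly reduce the injectivity of $q^{*}$ to showing that the connecting map (your $(\Sigma\delta)^{*}$, the paper's $p_{1}^{*}$) from $\{S^{2\mathbb{R}},S^{4m\widetilde{\mathbb{R}}}\}^{\Pin}_{\mathcal{U}}$ vanishes.

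Where you diverge is in how you attempt to kill this map, and here there is a genuine gap. You propose to track the ``factors through the collapse $S(m\mathbb{H})_{+}\to S^{0}$'' condition through the chain of isomorphisms in Lemma~\ref{lem: gamma is trivial when restriected to S1}(1). The problem is the very first step of that chain: the isomorphism to $\{S^{2\mathbb{R}+(m-1)\mathbb{H}}\wedge S(\mathbb{H})_{+},S^{4m\widetilde{\mathbb{R}}}\}^{\Pin}_{\mathcal{U}}$ is induced by the quotient map $\pi:S(m\mathbb{H})_{+}\to S^{(m-1)\mathbb{H}}\wedge S(\mathbb{H})_{+}$, and the collapse $c:S(m\mathbb{H})_{+}\to S^{0}$ does \emph{not} factor through $\pi$ (it is nontrivial on $S((m-1)\mathbb{H})$, which $\pi$ kills). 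So there is no commuting square that would transport ``factors through $c$'' to ``factors through the collapse $S(\mathbb{H})_{+}\to S^{0}$'' on the other side. The naturality you need simply is not there, and the obstacle you flagged is fatal to this line of argument as written.

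The paper bypasses this entirely. It computes the source group directly via the tom Dieck splitting: for $m>0$ the free and $S^{1}$-summands vanish by connectivity, leaving $\{S^{2\mathbb{R}},S^{4m\widetilde{\mathbb{R}}}\}^{\Pin}_{\mathcal{U}}\cong\pi^{s}_{2}(S^{0})\cong\mathbb{Z}/2$, generated by $\zeta^{\Pin}_{\Pin}(\eta^{2})$, i.e.\ the composite $S^{2}\xrightarrow{\eta^{2}}S^{0}\hookrightarrow S^{4m\widetilde{\mathbb{R}}}$. Then $p_{1}^{*}$ of this generator vanishes iff the composite $S(m\mathbb{H})_{+}\to S^{0}\hookrightarrow S^{4m\widetilde{\mathbb{R}}}$ is stably null, which by the same cofiber sequence is equivalent to extending the inclusion $S^{0}\hookrightarrow S^{4m\widetilde{\mathbb{R}}}$ over $S^{m\mathbb{H}}$. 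The paper supplies such an extension explicitly: smash $m$ copies of the $\Pin$-map $\varphi:S^{\mathbb{H}}\to S^{3\widetilde{\mathbb{R}}}$, $\varphi(z+wj)=(|z|^{2}-|w|^{2},\operatorname{Re}(z\bar w),\operatorname{Im}(z\bar w))$, with $S^{0}\hookrightarrow S^{m\widetilde{\mathbb{R}}}$. Your alternative suggestion (analyze $(\Sigma^{2}i)^{*}$ via tom Dieck) is in the right spirit and would lead you to essentially this argument; the explicit extension $\varphi$ is the missing geometric ingredient.
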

\begin{proof}
We consider the long exact sequence 
$$
\cdots \rightarrow\{S^{2\mathbb{R}},S^{4m\widetilde{\mathbb{R}}}\}^{\Pin}_{\mathcal{U}}\xrightarrow{p_{1}^{*}}\{S^{2\mathbb{R}}\wedge (S(m\mathbb{H})_{+}),S^{4m\widetilde{\mathbb{R}}}\}^{\Pin}_{\mathcal{U}}\xrightarrow{q^{*}} \{S^{\mathbb{R}+m\mathbb{H}}, S^{4m\widetilde{\mathbb{R}}}\}^{\Pin}_{\mathcal{U}}\rightarrow \cdots
$$
induced by the cofiber sequence $S(m\mathbb{H})_{+}\xrightarrow{p_{1}}S^{0}\xrightarrow{i_{1}} S^{m\mathbb{H}}$, where $i_{1}$ is the inclusion map and 
$p_{1}: S(m\mathbb{H})_{+}\rightarrow S^{0}$ is the map that collapses the whole $S(m\mathbb{H})$ to the non-base point in $S^{0}$. It suffices to show that the map $p_{1}^{*}$ is trivial. By (\ref{eq: tom Dieck iso 2}), we have
$$
\{S^{2\mathbb{R}},S^{4m\widetilde{\mathbb{R}}}\}^{\Pin}_{\mathcal{U}}\cong \{S^{2}, (S^{(4m+1)\widetilde{\mathbb{R}}}\wedge E\Pin_+)/\Pin\}\oplus \{S^{2}, (S^{4m\widetilde{\mathbb{R}}}\wedge (EC_2)_+)/C_{2}\} \oplus \{S^{2}, S^{0}\}.
$$
Since $m>0$, the first two summands vanishes by the skeletal approximation theorem. Hence we have $$\{S^{2\mathbb{R}},S^{4m\widetilde{\mathbb{R}}}\}^{\Pin}_{\mathcal{U}}\cong \{S^{2}, S^{0}\}\cong \mathbb{Z}/2,$$ 
and it generated by $\xi^{\Pin}_{\Pin}(\eta^{2})$. (Here $\eta\in \pi_{1}(S^{0})$ is the Hopf map.) We need to show that $p_{1}^{*}(\xi^{\Pin}_{\Pin}(\eta^{2}))=0$. By the definition of $\xi^{\Pin}_{\Pin}$ in (\ref{eq: definition of zeta}), $\xi^{\Pin}_{\Pin}(\eta^{2})$ is equal to the following composition of stable maps
$$
S^{2}\xrightarrow{\eta^{2}}S^{0}\xrightarrow{i_{1}} S^{4m\widetilde{\mathbb{R}}},
$$
where $i_{1}: S^{0}\rightarrow S^{4m\widetilde{\mathbb{R}}}$ is the inclusion. 
Therefore, in order to show that $p_{1}^{*}$ is zero, it suffices to show that the composition 
\begin{equation}\label{eq: composition of projection and inclusion}
S(m\mathbb{H})_{+}\xrightarrow{p_{1}} S^{0}\xrightarrow{i_{1}} S^{4m\widetilde{\mathbb{R}}}
\end{equation}
is stably null-homotopic. By the exact sequence 
\begin{equation}
\cdots \rightarrow\{S^{m\mathbb{H}},S^{4m\widetilde{\mathbb{R}}}\}^{\Pin}_{\mathcal{U}}\rightarrow \{S^{0},S^{4m\widetilde{\mathbb{R}}}\}^{\Pin}_{\mathcal{U}}\xrightarrow{p^{*}}\{S(m\mathbb{H})_{+},S^{4m\widetilde{\mathbb{R}}}\}^{\Pin}_{\mathcal{U}}\rightarrow \cdots,    
\end{equation}
the composition (\ref{eq: composition of projection and inclusion}) is trivial if $i_{1}: S^{0}\rightarrow S^{4m\widetilde{\mathbb{R}}}$ can be extended to a map $\tilde{i}: S^{m\mathbb{H}}\rightarrow S^{4m\widetilde{\mathbb{R}}}$. Note that there is a $\Pin$-equivariant map 
$
\varphi: S^{\mathbb{H}}\rightarrow S^{3\widetilde{\mathbb{R}}}
$ defined by $$\varphi(z+wj):=(|z|^{2}-|w|^{2},\operatorname{Re}(z\bar{w}), \operatorname{Im}(z\bar{w})).$$ By smashing $m$ copies of $\varphi$ and the inclusion map $S^{0}\hookrightarrow S^{m\widetilde{\mathbb{R}}}$, we get the desired extension $\widetilde{i}$.
\end{proof}
\begin{proof}[Proof of Proposition \ref{prop: algebraic vanishing result3}] We first consider the case $m=n=0$. By (\ref{eq: ker to nonequivriant}), we get 
$$\ker (-)_{S^{\mathbb{R}},S^{0},\Pin}^{S^{1}}\cong \pi^{s}_{1}(\operatorname{Th}(B\Pin,\xi_{\widetilde{\mathbb{R}}})).$$
Note that the 3-skeleton of $B\Pin$ is $\mathbb{RP}^{2}$ and the restriction of $\xi_{\widetilde{\mathbb{R}}}$ to the 3-skeleton is exactly the tautalogical line bundle over it. Therefore, the $4$-skeleton of 
$\operatorname{Th}(B\Pin,\xi_{\widetilde{\mathbb{R}}})$ is $\mathbb{RP}^{3}$. This implies that
$$
\ker (-)_{S^{1},S^{0},\Pin}^{S^{1}}\cong \pi^{s}_{1}(\mathbb{RP}^{3})\cong \mathbb{Z}/2.
$$
Let $\alpha$ be the nonzero element in this kernel. By definition, $\operatorname{res}_{S^{1}}^{\Pin}(\alpha)$ belongs to the kernel of the map 
$$(-)_{S^{1},S^{\widetilde{\mathbb{R}}},S^{1}}^{S^{1}}: \pi^{S^{1}}_{1,\mathcal{U}_{2}}(S^{0})\rightarrow \pi^{s}_{1}(S^{0})
.$$ By Lemma \ref{lem: kernel of S1 fixed points 2}, we have 
$$
\ker(-)_{S^{1},S^{\widetilde{\mathbb{R}}},S^{1}}^{S^{1}}\cong \pi^{s}_{0}(\mathbb{CP}^{\infty}_{+})\cong \mathbb{Z}.
$$
Since $\alpha$ is 2-torsion, we must have $\operatorname{res}_{S^{1}}^{\Pin}(\alpha)=0$. 

Now we consider the case $n=4m>0$. We take a element 
$$\beta\in \ker (-)_{S^{\mathbb{R}+m\mathbb{H}},S^{n\widetilde{\mathbb{R}}},\Pin}^{S^{1}}.$$
Similar to the proof of Proposition \ref{prop: algebraic vanishing result2}, we can show that $\beta$ belongs to the image of the map 
$$\{S^{2\mathbb{R}},S^{4m\widetilde{\mathbb{R}}}\}^{\Pin}_{\mathcal{U}}\xrightarrow{p_{1}^{*}}\{S^{2\mathbb{R}}\wedge (S(m\mathbb{H})_{+}),S^{4m\widetilde{\mathbb{R}}}\}^{\Pin}_{\mathcal{U}}.$$ By Lemma \ref{lem: gamma is trivial when restriected to S1}, $\{S^{2\mathbb{R}},S^{4m\widetilde{\mathbb{R}}}\}^{\Pin}_{\mathcal{U}}=\{0,\gamma\}$. So $\beta$ is either $0$ or $p_{1}^{*}(\gamma)$. By Lemma \ref{lem: q* injective}, $p_{1}^{*}(\gamma)\neq 0$. Also note that $(-)^{S^{1}}(\gamma)=0$ because the $S^{1}$-action on its domain $S^{2\mathbb{R}}\wedge S(m\mathbb{H})_{+}$ is free away from the base point. This implies that $p_{1}^{*}(\gamma)$ actually belongs to $\ker((-)_{S^{\mathbb{R}+m\mathbb{H}},S^{n\widetilde{\mathbb{R}}},\Pin}^{S^{1}})$. To this end, we see that 
$$
\ker((-)_{S^{\mathbb{R}+m\mathbb{H}},S^{n\widetilde{\mathbb{R}}},\Pin}^{S^{1}})=\{0, p_{1}^{*}(\gamma)\}\cong \mathbb{Z}/2.
$$
Since $\operatorname{res}^{\Pin}_{S^{1}}(\gamma)=0$ by Lemma \ref{lem: gamma is trivial when restriected to S1}, we have $\operatorname{res}^{\Pin}_{S^{1}}(p_{1}^{*}(\gamma))=0$.
\end{proof}
\begin{proof}[Proof of Proposition \ref{prop: algebraic vanishing result4}] By Lemma \ref{lem: computation of the kernel 1}, we have 
$$\ker (-)_{S^{2\mathbb{R}},S^{0},\Pin}^{S^{1}}\cong \pi^{s}_{2}(\operatorname{Th}(B\Pin,\xi_{\widetilde{\mathbb{R}}}))\cong  \pi^{s}_{2}(\mathbb{RP}^{3}).$$
Stably, $\mathbb{RP}^{3}$ splits into $S^{3}\vee \mathbb{RP}^{2}$. Note that $\mathbb{RP}^{2}$ fits in to the cofiber sequence 
$$
S^{1}\xrightarrow{\cdot 2} S^{1}\rightarrow \mathbb{RP}^{2} 
$$
Using the induced long exact sequence of stable homotopy groups, we get an isomorphism 
$$\ker (-)_{S^{2\mathbb{R}},S^{0},\Pin}^{S^{1}}\cong\pi^{s}_{2}(\mathbb{RP}^{3})\xrightarrow[\eta\cdot -]{\cong } \pi^{s}_{1}(\mathbb{RP}^{3})\cong \mathbb{Z}/2,$$
where $\eta\in \pi^{s}_{1}(S^{0})$ denotes the Hopf map. Let $\alpha'$ be the nonzero element in this kernel. Then since  the smash product with $\eta$ commutes with the Adams isomorphism (\ref{eq: Adams' isomorphism}), we have $\alpha'=\eta\cdot \alpha$, where $\alpha\in \ker (-)_{S^{\mathbb{R}},S^{0},\Pin}^{S^{1}}$ is the  element established in Proposition \ref{prop: algebraic vanishing result3}. By that proposition, we get
$$
\operatorname{res}^{\Pin}_{S^{1}}(\alpha')=\eta\cdot  \operatorname{res}^{\Pin}_{S^{1}}(\alpha)=0.$$
\end{proof}

\section{Properties of The Family Bauer--Furuta Invariant}\label{section: properties of FBF} 
In this section, we use the homotopy theoretic results that we proved in Section \ref{section homotopy theory} to study the family Bauer--Furuta invariant. 
We will start with proving the vanishing properties of the family Bauer--Furuta invariant. Then we give a space level construction of the stable homotopy class $\alpha_{0}$ given in Theorem \ref{theorem main: curious Pin(2)-element}, which provides a potential tool to detect exotic diffeomorphisms on $S^{4}$.

\subsection{Proof of the vanishing results.} To prove Theorem \ref{thm: generalized vanishing theorem}, we give the following definition, which is motivated by the construction of the family Bauer--Furuta map defined in Section \ref{section: definition of FBF}. 

\begin{definition}\label{definition: elements of BF-type} Let $\alpha$ be an element in one of the following stable homotopy groups:
\begin{itemize}
    \item $\{S^{k\mathbb{R}}\wedge S^{m\mathbb{H}},S^{n\widetilde{\mathbb{R}}}\}^{\Pin}_{\mathcal{U}}$ for some integers $k,m,n\geq 0$, or
    \item $\{S^{k\mathbb{R}},S^{n\widetilde{\mathbb{R}}+(-m)\mathbb{H}}\}^{\Pin}_{\mathcal{U}}$ for some integers $k,n\geq 0$ and $m<0$.
\end{itemize}
We say $\alpha$ is an element \emph{of BF-type} if it satisfies the following condition
\begin{equation*}
(-)^{S^{1}}(\alpha)=0\in \{S^{k\mathbb{R}},S^{n\widetilde{\mathbb{R}}}\}^{C_{2}}_{\mathcal{U}_{3}}.
\end{equation*}Namely, the restriction of $\alpha$ to the $S^{1}$-fixed points is trivial as a $C_{2}$-equivariant stable homotopy class.
\end{definition}

\begin{lemma}\label{lem: FSW is of BF-type} Let $X$ be a spin 4-manifold with $b_{1}(X)=0$. Then we have the  following results: 
\begin{enumerate}
    \item[(i)] Let $f$ be a orientation preserving self-diffeomorphism on $X$ that preserves the spin structure and acts trivially on $H^{*}(X;\mathbb{R})$. Let $\widetilde{f}$ be an auotomorphism of the spin structure on $X$ that lifts $f$. Then $\FBF^{\Pin}(X,f,\widetilde{f})$ is an element of BF-type. 
    \item[(ii)] Let $\gamma:S^{1}\rightarrow \operatorname{Diff}(X)$ be a loop based at $\operatorname{id}_{X}$. Suppose $\gamma$ is spin-liftable (see Definition \ref{defi: spin-liftable}). Then $\FBF^{\Pin}(X,\gamma)$ is an element of BF-type. 
\end{enumerate}
\end{lemma}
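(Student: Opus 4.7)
The plan is to exploit the fact that restricting $\widetilde{sw}$ to its $S^1$-fixed locus dramatically simplifies the geometry, reducing the problem to a cohomological statement. Since $S^1 \subset \Pin$ acts on $\mathbb{H}$ by left multiplication, which is free on nonzero vectors, we have $(\mathbb{H}^\bullet)^{S^1} = 0$. Under the trivializations (\ref{eq: trivialization}), this gives $U^{S^1} \cong S^k \times \widetilde{\mathbb{R}}^{n'}$ and $V^{S^1} \cong \widetilde{\mathbb{R}}^{n' + b^+_2(X)}$, with $k = 1$ in case (i) and $k = 2$ in case (ii). On this fixed locus the Seiberg--Witten equations reduce to the linear equation $F^+_A = 0$ together with the gauge-fixing condition $d^\ast A = 0$, since the fixed points in the configuration space are exactly the reducibles (zero spinor). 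Thus $\widetilde{sw}^{S^1}$ is a finite-dimensional approximation of the family linear operator $d^+ \oplus d^\ast$ on imaginary $1$-forms, and in particular depends only on the smooth family structure and the family metric.

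Next I would show that this family of linear operators is trivializable in the appropriate sense, so that $(\widetilde{sw}^+)^{S^1}$ becomes $C_2$-equivariantly homotopic to a map factoring through the collapse $p_0 \wedge \id : S^k_+ \wedge S^{n'\widetilde{\mathbb{R}}} \to S^0 \wedge S^{n'\widetilde{\mathbb{R}}}$, where $p_0: S^k_+ \to S^0$ sends $S^k$ to the non-basepoint. The relevant obstruction is the stable class of the cokernel bundle, which is stably equivalent to the bundle of $H^2_+(X;\mathbb{R})$ over the parameter space. In case (i), this is the mapping torus of $H^2_+(X;\mathbb{R})$ over $S^1$ with monodromy $f^\ast|_{H^2_+}$, which equals $\id$ by hypothesis, so the bundle is trivial. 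In case (ii), it is a rank-$b^+_2(X)$ real bundle over $S^2$, classified by an element of $\pi_1(SO(b^+_2(X)))$ detected by $w_2$; the families Atiyah--Singer index theorem identifies this $w_2$ with the pullback of $w_2$ of the vertical tangent bundle of $T\gamma$, which vanishes by spin-liftability.

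With this trivialization in hand, the conclusion follows from a stable homotopy computation. The family Bauer--Furuta invariant is defined as the composition of a suspension of $\widetilde{sw}^+$ with the Pontryagin--Thom collapse $c_0 : S^{(k+1)\mathbb{R}} \to S^\mathbb{R} \wedge S^k_+$. Once $(\widetilde{sw}^+)^{S^1}$ factors through $p_0 \wedge \id$, the composition contains the factor $(\id_{S^\mathbb{R}} \wedge p_0) \circ c_0 : S^{(k+1)\mathbb{R}} \to S^\mathbb{R}$, which is null-homotopic because $\pi_{k+1}(S^1) = 0$ for $k \geq 1$. The null-homotopy is automatically $C_2$-equivariant since $C_2$ acts trivially on $S^\mathbb{R}$. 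This gives $(-)^{S^1}(\FBF^{\Pin}) = 0$ in $\{S^{k\mathbb{R}}, S^{n \widetilde{\mathbb{R}}}\}^{C_2}_{\mathcal{U}_3}$, verifying the BF-type property.

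The main obstacle lies in the $w_2$ identification in case (ii). Case (i) is elementary because the mapping torus monodromy is directly controlled by the hypothesis on $f^\ast$. For the loop case, however, the identification of $w_2$ of the cokernel bundle with $w_2$ of the vertical tangent bundle requires a careful families-index-theoretic argument applied to the signature-type complex $\Omega^0 \to \Omega^1 \to \Omega^2_+$, and verifying that the trivialization can be chosen $C_2$-equivariantly (so that the constructed null-homotopy respects the $C_2$-action on the $\widetilde{\mathbb{R}}$-summand) adds some further bookkeeping.
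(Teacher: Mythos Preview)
Your overall strategy coincides with the paper's: restrict $\widetilde{sw}$ to its $S^1$-fixed locus (the reducibles), identify the restricted map with the linear family $(d^*,d^+)_s$, show this factors through the collapse $S^k_+\to S^0$, and conclude via the null-homotopy of $(\id_{S^{\mathbb{R}}}\wedge p_0)\circ c_0$. The first and last steps match the paper exactly.

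The difference is in how the factorization through $p_0\wedge\id$ is obtained. The paper performs no obstruction analysis at all: the trivializations (\ref{eq: trivialization}) are already part of the \emph{definition} of $\FBF^{\Pin}$, and by inspecting their construction in \cite{KM2020,lin} one sees they are set up precisely so that under them $(d^*,d^+)_s$ becomes the constant inclusion $\widetilde{\mathbb{R}}^{n'}\hookrightarrow\widetilde{\mathbb{R}}^{n'+b^+_2(X)}$ into the first $n'$ coordinates, for every $s$. The factorization is then tautological, and since (\ref{eq: trivialization}) is a $\Pin$-equivariant identification, the $C_2$-equivariance you worry about comes for free. You instead attempt to re-prove that such a trivialization exists by analyzing the cokernel bundle $H^2_+(X_s)$ over the base; this is redundant, since that bundle is \emph{already} trivialized as part of the data defining the invariant.

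In particular, the ``main obstacle'' you flag for case (ii) never arises in the paper's framework. Even within your own framework it has a far simpler resolution than a families index computation: the full bundle $H^2(X_s;\mathbb{R})$ over $S^2$ carries the flat Gauss--Manin connection and is therefore trivial over the simply-connected base, and since the space of maximal positive-definite subspaces for a fixed nondegenerate form is contractible, the sub-bundle $H^2_+$ is trivial as well. By contrast, your proposed identification of $w_2$ of the cokernel bundle with $w_2$ of the vertical tangent bundle via the families index theorem is not obviously correct as stated (the usual families index theorem computes rational characteristic classes, not $w_2$), and would itself require substantial work to justify.
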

\begin{proof} We focus on the proof of (i) since (ii) is similar. Consider the approximated Seiberg--Witten map 
$\widetilde{sw}: U\rightarrow V$ (see equation (\ref{eq: sw})). Recall that both the domain and the target of this map consists of pairs $(\alpha,\phi)$, where $\alpha$ is a differential form on $X$ and $\phi$ is a spinor on $X$. The symmetry group $S^{1}$ is the constant gauge group. Hence it acts trivially on $\alpha$ and acts as the scalar multiplication on $\phi$. From this description, we see that $(\alpha,\phi)$ is an $S^{1}$-fixed point if and only if $\phi=0$. As a result, when restricted to the $S^{1}$-fixed points, $\widetilde{sw}$ is a finite dimensional approximation of the family of linear operators $$\{(d^{*},d^{+})_{s}: \Omega^{1}(X_{s};i\mathbb{R})\rightarrow (\Omega^{0}(X_{s};i\mathbb{R})/\mathbb{R})\oplus \Omega_{+}^{2}(X_{s};i\mathbb{R})\}_{s\in S(2\mathbb{R})}.$$
The trivilization (\ref{eq: trivialization}) induce isomorphisms 
$$
U\cap \Omega^{1}(X_{s};i\mathbb{R}) \cong \mathbb{R}^{n'},
$$
$$
V\cap (\Omega^{0}(X_{s};i\mathbb{R})/\mathbb{R})\oplus \Omega^{2}(X_{s};i\mathbb{R})\cong \mathbb{R}^{n'+b^{+}_{2}(X_{s})}.
$$
Furthermore, by checking the construction of these trivilizations (\ref{eq: trivialization}) in \cite{KM2020,lin}, we see that under the above isomorphisms,  the operator $(d^{*},d^{+})_{s}$ is exactly the inclusion $\mathbb{R}^{n'}\hookrightarrow \mathbb{R}^{n'+b^{+}_{2}(X)}$ into the first $n'$ components. From this discussion, we see that when we restrict the induced map 
$$\widetilde{sw}^{+}: S^{1}_{+}\wedge   S^{(m'-\frac{\sigma(X)}{16})\mathbb{H}+  n'\widetilde{\mathbb{R}}}\rightarrow 
S^{m'\mathbb{H}+ (n'+b^{+}_{2}(X))\widetilde{\mathbb{R}}}$$
to the $S^{1}$-fixed points, it is homotopic to the following map
\begin{equation}\label{eq: restriction to S1-fixed points}
(S(2\mathbb{R})_{+})\wedge S^{n'\widetilde{\mathbb{R}}} \xrightarrow{p_{2}\wedge i_{2}} S^{0}\wedge S^{(n+n')\widetilde{\mathbb{R}}}     
\end{equation}
where $p_{2}:S(2\mathbb{R})_{+} \rightarrow S^{0}$ preserves the base point and collapse $S(2\mathbb{R})$ to the other point in $S^{0}$, and $i_{2}:  S^{n'\widetilde{\mathbb{R}}}\rightarrow S^{(n+n')\widetilde{\mathbb{R}}}$ is the inclusion into the first $n'$ components. 

Recall that $\FBF^{\Pin}(X,f,\widetilde{f})$ is defined via the composition (\ref{eq: BF composition}). From our discussion above, the element $(-)^{S^{1}}(\FBF^{\Pin}(X,f,\widetilde{f}))$ can be represented by the following composition 
$$
S^{2\mathbb{R}}\wedge S^{n'\widetilde{\mathbb{R}}}\xrightarrow{c_{0}\wedge \id_{S^{n'\widetilde{\mathbb{R}}}}} S^{\mathbb{R}}\wedge (S(2\mathbb{R})_{+})\wedge S^{n'\widetilde{\mathbb{R}}}\xrightarrow{\id_{S^{\mathbb{R}}}\wedge p_2\wedge i_2} S^{\mathbb{R}}\wedge S^{0}\wedge S^{(n+n')\widetilde{\mathbb{R}}}. 
$$
Note that $c_{0}\circ (\id_{S^{\mathbb{R}}}\wedge p_{2}): S^{2\mathbb{R}}\rightarrow S^{\mathbb{R}}$ is null-homotopic. Thus  $(-)^{S^{1}}(\FBF^{\Pin}(X,f,\widetilde{f}))=0$.
\end{proof}
The following proposition is just a restatement of Proposition \ref{prop: algebraic vanishing result1} $\sim$ Proposition \ref{prop: algebraic vanishing result4}.

\begin{proposition}\label{prop: elements of BF-type}
\begin{itemize}
    \item[(i)] Suppose $m,n\geq 0$ and are not both zero. Then  $\pi^{\Pin}_{1,\mathcal{U}}(S^{m\mathbb{H}+n\widetilde{\mathbb{R}}})$ doesn't contain any nonzero element of BF-type. 
    \item[(ii)] Suppose $n>4m\geq 0$. Then $\{S^{\mathbb{R}+m\mathbb{H}},S^{n\widetilde{\mathbb{R}}}\}^{\Pin}_{\mathcal{U}}$ doesn't contain any nonzero element of BF-type. 
    \item[(iii)] Suppose $n=4m\geq 0$. Then  
$\{S^{\mathbb{R}+m\mathbb{H}},S^{n\widetilde{\mathbb{R}}}\}^{\Pin}_{\mathcal{U}}$ contains a unique nonzero element of BF-type, denoted by $\alpha_{0}$. Furthermore, we have $\operatorname{res}_{S^{1}}^{\Pin}(\alpha_{0})=0$.
\item [(iv)] $\pi^{\Pin}_{2}(S^{0})$ contains an unique nonzero element of BF-type, denoted by $\alpha_{1}$. Furthermore, we have $\operatorname{res}_{S^{1}}^{\Pin}(\alpha_{1})=0$.
 \end{itemize}
\end{proposition}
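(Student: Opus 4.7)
The plan is to observe that ``BF-type'' is simply a reformulation of kernel-membership for the $S^{1}$-fixed-point restriction map, after which each of the four parts becomes a direct translation of one of Proposition \ref{prop: algebraic vanishing result1}--Proposition \ref{prop: algebraic vanishing result4}. By Definition \ref{definition: elements of BF-type}, an element $\alpha$ is of BF-type precisely when $(-)^{S^{1}}(\alpha)=0$ in the appropriate $C_{2}$-equivariant stable homotopy group, i.e.\ precisely when $\alpha$ lies in $\ker (-)^{S^{1}}$. Hence the assertion ``there is no nonzero element of BF-type'' is equivalent to the triviality of $\ker (-)^{S^{1}}$, and ``there is a unique nonzero element of BF-type'' is equivalent to $\ker (-)^{S^{1}} \cong \mathbb{Z}/2$.

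With this reformulation, part (i) becomes the statement that $(-)^{S^{1}}_{S^{1},S^{m\mathbb{H}+n\widetilde{\mathbb{R}}},\Pin}$ is injective under $(m,n)\neq (0,0)$, which is precisely Proposition \ref{prop: algebraic vanishing result1}; part (ii) becomes the statement that $(-)^{S^{1}}_{S^{\mathbb{R}+m\mathbb{H}},S^{n\widetilde{\mathbb{R}}},\Pin}$ is injective when $n>4m\geq 0$, which is precisely Proposition \ref{prop: algebraic vanishing result2}; part (iii) identifies $\alpha_{0}$ with the unique nonzero element in the $\mathbb{Z}/2$ kernel identified in Proposition \ref{prop: algebraic vanishing result3}, whose triviality under $\operatorname{res}^{\Pin}_{S^{1}}$ is the second clause of that proposition; and part (iv) identifies $\alpha_{1}$ with the element produced in Proposition \ref{prop: algebraic vanishing result4}, again with the vanishing of $\operatorname{res}^{\Pin}_{S^{1}}(\alpha_{1})$ already contained there.

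Since Propositions \ref{prop: algebraic vanishing result1}--\ref{prop: algebraic vanishing result4} were proved in Section \ref{section homotopy theory}, no further argument is required. There is no real obstacle here: all the genuine work (tom Dieck splitting, Adams isomorphism, the skeletal approximation computations on Thom spaces of $B\Pin$, the use of Lemma \ref{lem: gamma is trivial when restriected to S1} and Lemma \ref{lem: q* injective} to pin down the $\mathbb{Z}/2$ in the $n=4m$ case, and the $\eta$-multiplication argument in the $\pi^{\Pin}_{2}(S^{0})$ case) has already been carried out. The content of this proposition is purely bookkeeping: rewriting the kernel computations in a form tailored to the geometric application to the family Bauer--Furuta invariant via Lemma \ref{lem: FSW is of BF-type}.
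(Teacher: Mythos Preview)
Your proposal is correct and matches the paper's own treatment: the paper states that this proposition ``is just a restatement of Proposition \ref{prop: algebraic vanishing result1} $\sim$ Proposition \ref{prop: algebraic vanishing result4}'' and gives no further argument. Your unpacking of the dictionary between ``BF-type'' and membership in $\ker(-)^{S^{1}}$ is exactly the intended content, so nothing more is needed.
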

\begin{proof}[Proof of Theorem \ref{theorem main: vanishing result for loops}] Let $\gamma$ an exotic loop in $\operatorname{Diff}(S^{4})$. Then $\gamma$ is spin-liftable. By Lemma \ref{lem: FSW is of BF-type}, $\FBF^{\Pin}(S^{4},\gamma)$ is an element in  $\pi^{\Pin}_{2}(S^{0})$ of BF-type. By Proposition \ref{prop: elements of BF-type} (iv), $\FBF^{\Pin}(S^{4},\gamma)$ is either $0$ or $\alpha_{1}$. So we have 
$$
\FBF^{S^{1}}(S^{4},\gamma)=\operatorname{res}^{\Pin}_{S^{1}}(\FBF^{\Pin}(S^{4},\gamma))=0.
$$
This further implies that $
\FBF^{\{e\}}(S^{4},\gamma)=0$.

Now suppose $\gamma$ is homotopic to  $2\gamma'$ for some loop $\gamma'$ in $\operatorname{Diff}(S^{4})$. We may assume that $\gamma'$ is also spin-liftable by compositing it with an essential loop in $\operatorname{SO(5)}\subset \operatorname{Diff}(S^{4})$. By Proposition \ref{prop: elements of BF-type} (iv) again, $\FBF^{\Pin}(S^{4},\gamma')$ is either $0$ or $\alpha_{1}$. Since $\alpha_{1}$ is 2-torsion, we have 
$$
\FBF^{\Pin}(S^{4},\gamma)=2\FBF^{\Pin}(S^{4},\gamma')=0.$$
\end{proof}
\begin{proof}[Proof of Theorem \ref{theorem main: curious Pin(2)-element}] This is a special case Proposition \ref{prop: elements of BF-type} (iii) for $m=n=0$. 
\end{proof}

\begin{proof}[Proof of Theorem \ref{thm: generalized vanishing theorem}:] Assume the intersection form of $X$ is isomorphic to $2mE8\oplus n(\begin{smallmatrix}0,1\\
1,0
\end{smallmatrix})$ for some $m\in \mathbb{Z}$ and $n\in \mathbb{N}$. Then $2\chi(X)+3\sigma(X)=4+4\cdot (n-4m)$.

Suppose $2\chi(X)+3\sigma(X)=4$. Then $m=4n$. By Lemma \ref{lem: FSW is of BF-type}, $\FBF^{\Pin}(X,f,\widetilde{f})$ is an element in 
$\{S^{\mathbb{R}+m\mathbb{H}},S^{n\widetilde{\mathbb{R}}}\}^{\Pin}_{\mathcal{U}}$ of BF-type. Therefore, by Proposition \ref{prop: elements of BF-type} (iii), we have
$$
\FBF^{S^{1}}(X,f,\widetilde{f})=\operatorname{res}^{\Pin}_{S^{1}}(\FBF^{\Pin}(X,f,\widetilde{f}))=0.
$$ This proves (1).

Suppose $2\chi(X)+3\sigma(X)>4$. Then $n>4m$. Again by Lemma \ref{lem: FSW is of BF-type}, $\FBF^{\Pin}(X,f,\widetilde{f})$ is an element of BF-type in $\pi^{\Pin}_{1,\mathcal{U}}(S^{-m\mathbb{H}+n\widetilde{\mathbb{R}}})$ if $m<0$ and element of BF-type in $\{S^{\mathbb{R}+m\mathbb{H}},S^{n\widetilde{\mathbb{R}}}\}^{\Pin}_{\mathcal{U}}$ if $m\geq 0$. Then we apply Proposition \ref{prop: elements of BF-type} (i) and (ii) to conclude that $\FBF^{\Pin}(X,f,\widetilde{f})=0$. This proves (2).
\end{proof} 

\begin{remark}\label{remark: nonvanishing FBF} We mention various examples of nonvanishing family Bauer-Furuta invariants. These examples show that various conditions in Theorem \ref{thm: generalized vanishing theorem} are indeed necessary. 
\begin{itemize}
    \item Consider the Dehn twist $\delta_{K3\# K3}$ along the separating neck in $K3\# K3$. Then Kronheimer--Mrowka \cite{KM2020} prove that 
    $$
    \FBF^{\{e\}}(K3\#K3,\delta_{K3\# K3}, \widetilde{\delta})=\eta^{3}\neq 0
    $$
    for any spin bundle automorphism $\widetilde{\delta}$ which lifts $\delta_{K3\# K3}$. Here $\eta\in \pi^{s}_{1}(S^{0})$ denotes the Hopf element. Note that $2
    \chi(K3\#K3)+3\sigma(K3\#K3)=-4.$ 
    \item Consider the covering transformation on the spin bundle over $K3$. This is a lift of $\id_{K3}$, denoted by $\widetilde{\id}_{K3}$. Then Kronehimer--Mrowka \cite{KM2020} proves that  
    $$
    \FBF^{\{e\}}(K3, \id_{K3},\widetilde{\id}_{K3})=\eta^{2}\neq 0.
    $$
    Note that $2\chi(K3)+3\sigma(K3)=0$.
    \item Consider the covering transformation on the spin bundle over $K3\# (S^{2}\times S^{2})$, denoted by $\widetilde{\id}_{K3\# (S^{2}\times S^{2})}$. Then by \cite[Proof of Theorem 1.2]{lin}, we have 
    $$
    \FBF^{\Pin}(K3\#(S^{2}\times S^{2}), \id_{K3\#(S^{2}\times S^{2})},\widetilde{\id}_{K3\#(S^{2}\times S^{2})})\neq 0.
    $$
    Note that $2\chi(K3\# (S^{2}\times S^{2}))+ 3\sigma(K3\# (S^{2}\times S^{2}))=4$.
\end{itemize}
\end{remark}

\subsection{A space level description of $\alpha_{0}$} The  element $\alpha_{0}\in \pi^{\Pin}_{1,\mathcal{U}}(S^{0})$ given in Theorem \ref{theorem main: curious Pin(2)-element} is of special interest because it could arises as the $\Pin$-equivariant family Bauer--Furuta invariant of an exotic diffeomorphism on $S^{4}$. In this section, we explicitly construct a map 
$$
l: S^{\mathbb{R}+\mathbb{H}}\rightarrow S^{\mathbb{H}}
$$
which represents $\alpha_{0}$. 

We treat $\Pin$ as a submanifold of $\mathbb{H}$, with the standard embedding 
$$
\Pin=\{e^{ i\theta}\}\cup \{j e^{i\theta}\}\hookrightarrow \mathbb{H}.
$$
Denote the tangent bundle and the normal bundle of $\Pin$ by $T_{*}\Pin$ and $N_{*}\Pin$ respectively. Both bundles are $\Pin$-bundles. Namely, there are $\Pin$-actions on both bundles that cover the left-multiplication action of $\Pin$ on itself. Furthermore, we have trivilizations 
\begin{equation}\label{eq: trivliazation1}
T_{*}\Pin\cong \mathbb{R}\times \Pin, 
\end{equation}
\begin{equation}\label{eq: trivliazation2}
T_{*} \Pin \oplus N_{*}\Pin\cong \mathbb{H}\times \Pin.
\end{equation}
Here (\ref{eq: trivliazation1}) is induced by the Lie group structure on $\Pin$ and (\ref{eq: trivliazation2}) is restricted from the trivialization of $T_{*}\mathbb{H}$. These two trivializations induce equivariant  homeomorphisms 
$$
\varphi_{1}: S^{\mathbb{R}}\wedge  \operatorname{Th}(\Pin, N_{*}\Pin)\xrightarrow{\cong} \operatorname{Th}(\Pin, T_{*}\Pin\oplus N_{*}\Pin),
$$
$$
\varphi_{2}:  \operatorname{Th}(\Pin, T_{*}\Pin\oplus N_{*}\Pin)\xrightarrow{\cong} S^{\mathbb{H}}\wedge (\Pin_{+}).
$$
Let $\nu(\Pin)$ be a $\Pin$-equivariant, closed tubular neighborhood of $\Pin$ in $\mathbb{H}$. Then there exists an equivariant homeomorphism
$$
\varphi_{3}: \nu(\Pin)/\partial \nu(\Pin)\xrightarrow{\cong} \operatorname{Th}(\Pin, N_{*}\Pin).
$$
Then we consider the following composition
\begin{equation}\label{eq: space level description of alpha}
    \begin{split}
l: S^{\mathbb{R}}\wedge S^{\mathbb{H}}&\xrightarrow{\id_{S^{\mathbb{R}}}\wedge c_{1}} S^{\mathbb{R}}\wedge (\nu(\Pin)/\partial \nu(\Pin))\\
&\xrightarrow{\id_{S^{\mathbb{R}}}\wedge \varphi_{3}}S^{\mathbb{R}}\wedge \operatorname{Th}(\Pin, N_{*}\Pin)\\
&\xrightarrow{\varphi_{1}} \operatorname{Th}(\Pin, T_{*}\Pin\oplus N_{*}\Pin)\\
& \xrightarrow{\varphi_{2}} S^{\mathbb{H}}\wedge (\Pin_{+})\\
&\xrightarrow{\id_{S^{\mathbb{H}}}\wedge p_{3}} S^{\mathbb{H}}
\end{split}
\end{equation}
Here $c_{1}: S^{\mathbb{H}}\rightarrow \nu(\Pin)/\partial \nu(\Pin)$ is the Pontryagin--Thom collapsing map. The map $p_{3}: \Pin_{+}\rightarrow S^{0}$ preserves the base point and  collapses the whole $\Pin$ into the other point in $S^{0}$.
\begin{proposition}\label{prop: representative of alpha}
$\alpha_{0}$ is represented by the map $l$.
\end{proposition}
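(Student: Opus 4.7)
The plan is to apply Proposition \ref{prop: elements of BF-type}(iii) in the case $m=n=0$, which characterizes $\alpha_{0}$ as the unique nonzero element of $\pi^{\Pin}_{1,\mathcal{U}}(S^{0})$ of BF-type. It therefore suffices to establish (a) that $l$ is of BF-type, and (b) that $l \neq 0$. For (a), I would note that the $S^{1}\subset \Pin$ action on $\mathbb{H}$ by left multiplication has fixed set $\{0\}$, so the tubular neighborhood $\nu(\Pin)$ can be chosen disjoint from $0$. Then the collapse $c_{1}$ sends $(S^{\mathbb{H}})^{S^{1}}=\{0,\infty\}$ to the basepoint of $\nu(\Pin)/\partial \nu(\Pin)$. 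Since the remaining maps $\varphi_{1},\varphi_{2},\varphi_{3},p_{3}$ are $\Pin$-equivariant and basepoint-preserving, the restriction $(-)^{S^{1}}(l)$ is null-homotopic.

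For (b), the key is the factorization $l=(\id_{S^{\mathbb{H}}}\wedge p_{3})\circ \tilde l$, where $\tilde l : S^{\mathbb{R}+\mathbb{H}} \to S^{\mathbb{H}}\wedge\Pin_{+}$ is the equivariant Pontryagin--Thom transfer of the free $\Pin$-orbit $\Pin \hookrightarrow \mathbb{H}$, framed via the trivializations (\ref{eq: trivliazation1}) and (\ref{eq: trivliazation2}). Since $\Pin$ acts freely on $\Pin_{+}$ away from basepoint, the tom Dieck splitting of $\pi^{\Pin}_{1,\mathcal{U}}(\Pin_{+})$ consists only of the $H=\{e\}$ summand, and Adams' isomorphism gives
\[
\pi^{\Pin}_{1,\mathcal{U}}(\Pin_{+}) \cong \pi^{s}_{1}\bigl((E\Pin)_{+}\wedge S^{\widetilde{\mathbb{R}}}\bigr) \cong \pi^{s}_{0}(S^{0}) \cong \mathbb{Z},
\]
where I use that $(E\Pin_{+}\wedge \Pin_{+})/\Pin \cong (E\Pin)_{+}$ for the diagonal action. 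The class $\tilde l$ corresponds to a generator, since it is the equivariant PT transfer of a single free orbit. By naturality of the tom Dieck decomposition, $(p_{3})_{*}$ sends this summand into the $H=\{e\}$ component of $\pi^{\Pin}_{1,\mathcal{U}}(S^{0})$, which is $\pi^{s}_{1}(\operatorname{Th}(B\Pin,\xi_{\widetilde{\mathbb{R}}}))\cong \pi^{s}_{1}(\mathbb{RP}^{3})\cong \mathbb{Z}/2$ (computed via the $4$-skeleton in the proof of Proposition \ref{prop: algebraic vanishing result3}). Under the stable splitting $\mathbb{RP}^{3}\simeq_{s}S^{3}\vee \mathbb{RP}^{2}$, the induced map $\mathbb{Z}\cong \pi^{s}_{1}(S^{1})\to \pi^{s}_{1}(\mathbb{RP}^{3})\cong \mathbb{Z}/2$ corresponds to the bottom-cell inclusion $S^{1}\hookrightarrow \mathbb{RP}^{2}\hookrightarrow \mathbb{RP}^{3}$, which from the cofiber sequence $S^{1}\xrightarrow{\cdot 2}S^{1}\to \mathbb{RP}^{2}$ is easily seen to be reduction mod $2$. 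Hence the generator of $\pi^{\Pin}_{1,\mathcal{U}}(\Pin_{+})$ maps to the generator of $\pi^{s}_{1}(\mathbb{RP}^{3})$, establishing $l \neq 0$. Combined with (a) and uniqueness, this forces $l=\alpha_{0}$.

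The main obstacle is rigorously verifying the two ingredients used in (b): that the equivariant Pontryagin--Thom transfer of $\Pin \hookrightarrow \mathbb{H}$ realizes a generator of $\pi^{\Pin}_{1,\mathcal{U}}(\Pin_{+})$, and that the induced augmentation corresponds, under the Adams isomorphism, to the bottom-cell inclusion of the Thom space. The first will require a careful identification of the framing furnished by (\ref{eq: trivliazation1}) and (\ref{eq: trivliazation2}) with the standard left-invariant framing, together with an application of the equivariant framed-bordism interpretation of $\pi^{\Pin}_{1,\mathcal{U}}(\Pin_{+})$ for a single free orbit. The second requires tracing through the explicit definition (\ref{eq: definition of zeta}) of $\zeta^{\Pin}_{\{e\}}$ and verifying that post-composition with $p_{3}$ at the level of quotient spaces $(E\Pin_{+}\wedge \Pin_{+})/\Pin \to (E\Pin_{+})/\Pin$ is, on the $\widetilde{\mathbb{R}}$-suspended Thom spaces, precisely the inclusion of a fiber of $\xi_{\widetilde{\mathbb{R}}}$ into $\operatorname{Th}(B\Pin, \xi_{\widetilde{\mathbb{R}}})$.
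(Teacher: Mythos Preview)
Your proposal is correct and, at its technical core, overlaps with the paper's proof: both arguments hinge on identifying the equivariant Pontryagin--Thom transfer $\tilde l$ (which the paper calls $l_1$) with a generator of $\pi^{\Pin}_{1,\mathcal{U}}(\Pin_{+})\cong\mathbb{Z}$, and on showing that the augmentation $p_{3}$ induces the surjection $\mathbb{Z}\to\mathbb{Z}/2$ onto the $H=\{e\}$ summand. The organizational framework is different, however. The paper proceeds by \emph{direct unwinding}: it first notes (from Lemma~\ref{lem: computation of the kernel 1} and the $m=n=0$ case of Proposition~\ref{prop: algebraic vanishing result3}) that $\alpha_0=\zeta^{\Pin}_{\{e\}}(\alpha_1)$ for a generator $\alpha_1\in\pi^{\Pin}_{1,\mathcal{U}}(E\Pin_{+})\cong\mathbb{Z}/2$; it then uses the commutative square comparing Adams isomorphisms for $\Pin_{+}$ and $E\Pin_{+}$ to write $\alpha_1=i_{*}(j(1))$, invokes the explicit Lewis--May--Steinberger description of the Adams isomorphism to see that $j(1)$ is represented by $l_1$, and finally reads off from the definition (\ref{eq: definition of zeta}) of $\zeta^{\Pin}_{\{e\}}$ that the composite is exactly $l$. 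Your route via the uniqueness characterization from Proposition~\ref{prop: elements of BF-type}(iii) is equally valid; it buys you an independent and pleasantly geometric verification of the BF-type condition in your part (a), which the paper never needs because the identification with $\alpha_0$ is built in. The two ``obstacles'' you flag at the end are exactly the points the paper addresses with the citation to \cite{Lewis-May-Steinberger1986} and with the commutative diagram involving the $1$-skeleton inclusion, so you would resolve them in the same way.
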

\begin{proof}
By the proof of Proposition \ref{prop: algebraic vanishing result2}, $\alpha_{0}=\zeta^{\Pin}_{\{e\}}(\alpha_{1})$, where  $$\zeta^{\Pin}_{\{e\}}:\pi_{1,\mathcal{U}}^{\Pin}(E\Pin_{+})\rightarrow \pi_{1,\mathcal{U}}^{\Pin}(S^{0})$$ 
is the map defined in (\ref{eq: zeta}) and $\alpha_{1}$ is any generator of $\pi_{1,\mathcal{U}}^{\Pin}(E\Pin_{+})$. We first give a space level description of $\alpha_{1}$. Consider the following commutative diagram  
\begin{equation}\label{diagram: EG homotopy group}
    \xymatrix{
\pi^{\Pin}_{1,\mathcal{U}}(\Pin_{+})  \ar[r]^{i_{1}} & \pi^{\Pin}_{1,\mathcal{U}}(E\Pin_{+})\\
\pi^{s}_{1}((S^{\widetilde{\mathbb{R}}}\wedge \Pin_{+})/\Pin)\ar[u]^{j}_{\cong} \ar[r]^{i'_{1}} & \pi^{s}_{1}((S^{\widetilde{\mathbb{R}}}\wedge E\Pin_{+})/\Pin)\ar[u]^{j'}_{\cong}}. 
\end{equation}
Here $i_{*}$ and $i'_{*}$ are both induced by the inclusion $i:\Pin_{+}\hookrightarrow E\Pin_{+}$. And $j$, $j'$ are the Adams' isomorphisms (\ref{eq: Adams' isomorphism}). By our discussion in the proof of Proposition \ref{prop: algebraic vanishing result2}, the group $\pi^{s}_{1}((S^{\widetilde{\mathbb{R}}}\wedge E\Pin_{+})/\Pin)$ is isomorphic to $\mathbb{Z}/2$ and it is generated by the inclusion of the $1$-skeleton: 
$$
S^{\mathbb{R}}\cong (S^{\widetilde{\mathbb{R}}}\wedge \Pin_{+})/\Pin\hookrightarrow (S^{\widetilde{\mathbb{R}}}\wedge E\Pin_{+})/\Pin.
$$
Therefore, the map $i'_{*}$ is just the surjective map $\mathbb{Z}\rightarrow \mathbb{Z}/2$. 

By the diagram (\ref{diagram: EG homotopy group}), we see that $\alpha_{1}=i_{*}\circ j(1)$. By the explicit description of the Adams' isomorphism \cite[Page 84]{Lewis-May-Steinberger1986}, we see that $j(1)\in \pi^{\Pin}_{1,\mathcal{U}}(\Pin_{+})$ is represented by the composition 
\begin{equation}\label{eq: l1}
l_{1}:=\varphi_{2}\circ \varphi_{1}\circ (\id_{S^{\mathbb{R}}}\wedge \varphi_{3})\circ (\id_{S^{\mathbb{R}}}\wedge c_{1}): S^{\mathbb{R}+\mathbb{H}}\rightarrow S^{\mathbb{H}}\wedge (\Pin_{+}). 
\end{equation}
(i.e., the composition of the first four maps in (\ref{eq: space level description of alpha})). Hence $\alpha_{1}$ is presented by the composition 
$$
l_{2}:S^{\mathbb{R}+\mathbb{H}}\xrightarrow{l_{1}}S^{\mathbb{H}}\wedge (\Pin_{+})\xrightarrow{\id_{S^{\mathbb{H}}}\wedge i}  S^{\mathbb{H}}\wedge (E\Pin_{+}).
$$
By the construction of $\zeta^{\Pin}_{\{e\}}$ in (\ref{eq: definition of zeta}), $\alpha_{0}=\zeta^{\Pin}_{\{e\}}(\alpha_{1})$ is represented by the composition 
$$
l_{3}: S^{\mathbb{R}+\mathbb{H}}\xrightarrow{l_{2}} S^{\mathbb{H}}\wedge (E\Pin_{+})\xrightarrow{\id_{S^{\mathbb{H}}} \wedge p_{0} } S^{\mathbb{H}}.
$$
 Since $p_{0}\circ i: \Pin_{+}\rightarrow S^{0}$ is exactly the map $p_{4}$, we see that the maps $l_{3}$ and $l$ are the same. \end{proof}
 
\begin{remark}
It is worth noting that $l$ actually represents an element in $\{S^{1},S^{0}\}^{\Pin}_{\mathcal{U'}}$, where $\mathcal{U'}=\mathbb{H}^{\infty}\oplus \widetilde{\mathbb{R}}^{\infty}$ is a universe that only contains the nontrivial representations $\widetilde{\mathbb{R}}$ and $\mathbb{H}$. Compared to the universe $\mathcal{U}= \mathbb{H}^{\infty}\oplus \widetilde{\mathbb{R}}^{\infty}\oplus \mathbb{R}^{\infty}$ (which we have been working with so far), this smaller universe actually fits better with our setting in gauge theory. This is because the trivializations (\ref{eq: trivialization}) only involve $\widetilde{\mathbb{R}}$ and $\mathbb{H}$. However, since $\mathcal{U}'$ doesn't contain the trivial representation $\mathbb{R}$, the corresponding stable homotopy category behaves badly. (For example, for general $X,Y$, the set $\{X,Y\}^{\Pin}_{\mathcal{U}'}$ does not even carry a group structure.) This is the main reason why we choose to work with $\mathcal{U}$ instead of $\mathcal{U}'$. We also note that when restricting to the subgroup $S^{1}$, the universes $\mathcal{U}$ and $\mathcal{U}'$ are isomorphic.
\end{remark}

We end the paper with the following proposition. It states that $\alpha_{0}$ cannot be detected by Borel (co)homology, which corresponds to various versions of monopole Floer (co)homology. 
\begin{proposition} Let $A$ be an abelian group and let $G$ be a closed subgroup of $\Pin$. Then the induced maps 
$$
\alpha_{0,*}: \tilde{H}^{G}_{*}(S^{\mathbb{R}+ \mathbb{H}};A)\rightarrow \tilde{H}^{G}_{*}(S^{\mathbb{H}};A),
$$
$$
\alpha^{*}_{0}: \tilde{H}_{G}^{*}(S^{\mathbb{H}};A)\rightarrow \tilde{H}_{G}^{*}(S^{\mathbb{R}+\mathbb{H}};A).
$$
between the $G$-equivariant reduced  Borel (co)homology groups (with coefficient $A$) are both trivial.
\end{proposition}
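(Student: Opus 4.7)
The plan is to exploit the factorization $\alpha_0 = (\id_{S^{\mathbb{H}}} \wedge p_3) \circ l_1$ through $S^{\mathbb{H}} \wedge \Pin_+$ given by Proposition \ref{prop: representative of alpha}, and apply the Borel construction $EG_+ \wedge_G (-)$ to each factor. The crucial observation is that, for any closed subgroup $G \subseteq \Pin$, left multiplication by $G$ on $\Pin$ is free. Consequently, the $G$-invariant map $\mathbb{H} \times \Pin \to \mathbb{H} \times (\Pin/G)$ given by $(v,p) \mapsto (p^{-1}v, [p])$ yields a (non-equivariant) identification
$$EG_+ \wedge_G (S^{\mathbb{H}} \wedge \Pin_+) \;\simeq\; S^{\mathbb{H}} \wedge (\Pin/G)_+,$$
which simultaneously exhibits the pullback $c^*\mathbb{H}_G$ as trivial, where $c : \Pin/G \to BG$ is the classifying map of the principal $G$-bundle $\Pin \to \Pin/G$ and $\mathbb{H}_G = EG \times_G \mathbb{H}$.

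Under this identification, the Borelification of $\id_{S^{\mathbb{H}}} \wedge p_3$ is the Thom-space map induced by $c$, while the Borelification of $l_1$ is (up to Thom suspension by $\mathbb{H}_G$) the Becker--Gottlieb transfer $\pi^{!}$ of the fiber bundle $\Pin \to \Pin/G \xrightarrow{c} BG$. This is because $l_1$ is built via the Pontryagin--Thom collapse of the $\Pin$-equivariant embedding $\Pin \hookrightarrow \mathbb{H}$, framed by the left-multiplication trivializations $\varphi_1, \varphi_2$ of $T_{*}\Pin$ and $T_{*}\Pin \oplus N_{*}\Pin$; after applying $EG_+ \wedge_G(-)$, these become precisely the fiberwise framing data used in the standard construction of the Becker--Gottlieb transfer. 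The classical Becker--Gottlieb identity then states that the composition $c \circ \pi^{!}$ equals multiplication by $\chi(F)$ in the non-equivariant stable homotopy category of $BG_+$, where $F = \Pin$ is the fiber; since $\chi(\Pin) = \chi(S^1 \sqcup S^1) = 0$, this composition is stably null-homotopic.

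A stably null-homotopic map induces zero on every generalized (co)homology theory, and in particular on ordinary (co)homology with coefficients in any abelian group $A$. Unwinding the Borel construction, this gives $\alpha_{0,*} = 0$ on Borel homology and $\alpha_0^{*} = 0$ on Borel cohomology, proving the proposition. The main technical step will be verifying that the Borelification of $l_1$ coincides with the suspended Becker--Gottlieb transfer of $\pi : \Pin/G \to BG$; this reduces to checking that the trivialization $\varphi_2$, which comes from left multiplication on $\mathbb{H}$, Borelifies to the framing of the tangent-plus-normal bundle of the fiberwise embedding $EG \times_G \Pin \hookrightarrow EG \times_G \mathbb{H}$ that enters the Becker--Gottlieb construction.
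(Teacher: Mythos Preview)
Your identification of the Borelified $l_1$ with the Becker--Gottlieb transfer is not correct, and this is where the argument breaks. The standard Becker--Gottlieb transfer for $\pi:E\to B$ is built from the Pontryagin--Thom collapse $\Sigma^N B_+\to\operatorname{Th}(E,\nu)$ followed by the \emph{zero-section} inclusion $\operatorname{Th}(E,\nu)\hookrightarrow\operatorname{Th}(E,\nu\oplus T^vE)\cong\Sigma^N E_+$; it is a degree-$0$ stable map $B_+\to E_+$, and the identity $\pi_+\circ\tau=\chi(F)\cdot\id$ comes precisely from that zero section producing the Euler class of $T^vE$. The map $l_1$, however, does \emph{not} use the zero section: the homeomorphism $\varphi_1$ replaces it by the Lie-group framing $T_*\Pin\cong\mathbb{R}$, so that $l_1$ lands in $\{S^{\mathbb{R}},\Pin_+\}^{\Pin}$ rather than $\{S^0,\Pin_+\}^{\Pin}$. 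After Borelification and removing the $\mathbb{H}_G$-Thom suspension you obtain a stable map $\Sigma BG_+\to(\Pin/G)_+$ of degree~$1$, not the degree-$0$ transfer, and no Euler-characteristic identity applies to it. Indeed, the non-equivariant restriction of $l$ represents the framed-bordism class $[\Pin,\text{Lie framing}]=2\eta=0\in\pi_1^s$, and that vanishing has nothing to do with $\chi(\Pin)=0$; in particular it does not propagate to a statement like ``the Borelification of $l$ is stably null'' for arbitrary $G$.

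The paper's proof avoids this issue entirely by a direct degree argument on Borel cohomology. Using the same factorization through $S^{\mathbb{H}}\wedge\Pin_+$, one observes that $\tilde H_G^*(S^{\mathbb{R}+\mathbb{H}};A)\cong H^{*-5}(BG;A)$ is supported in degrees $\geq 5$, whereas $\tilde H_G^*(S^{\mathbb{H}}\wedge\Pin_+;A)$ is the ordinary cohomology of a finite complex of dimension $\leq 5$. Thus $l_1^*$ can be nonzero only in degree~$5$. But $\tilde H_G^*(S^{\mathbb{R}+\mathbb{H}};A)$ is a module over $H^*(BSU(2);\mathbb{Z})=\mathbb{Z}[v]$ on which multiplication by $v$ (degree~$4$) is an isomorphism in each degree $\geq 5$; since $v$ annihilates the finite-dimensional source, any class in the image of $l_1^*$ would have to be $v$-torsion, forcing $l_1^*=0$ in degree~$5$ as well. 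The homology statement follows the same way.
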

\begin{proof} $\alpha_{0}$ is represented by the map $l$. Since $l$ factors through the map 
$
l_{1}
$ in (\ref{eq: l1}). It suffices to show that the induced maps
$$
l_{1,*}: \tilde{H}^{G}_{*}(S^{\mathbb{R}+ \mathbb{H}};A)\rightarrow \tilde{H}^{G}_{*}(S^{\mathbb{H}}\wedge (\Pin_{+});A),
$$
$$
l^{*}_{1}: \tilde{H}_{G}^{*}((\Pin_{+})\wedge S^{\mathbb{H}};A)\rightarrow \tilde{H}_{G}^{*}(S^{\mathbb{R}+\mathbb{H}};A).
$$
are trivial. 
To prove this, we consider the fiber bundle $$
SU(2)/G\hookrightarrow BG\rightarrow BSU(2),
$$
which makes $H^{*}(BG;A)$ a module over $$H^{*}(BSU(2);\mathbb{Z})=H^{*}(\mathbb{HP}^{\infty};\mathbb{Z})=\mathbb{Z}[v].$$
An elementary argument using the Serre spectral sequence shows that the map 
$$
v\cdot -: H^{m}(BG;A)\rightarrow H^{m+4}(BG;A)
$$
is an isomorphism for any $m\geq 0$. By the definition of Borel cohomology, $\tilde{H}^{*}_{G}(S^{\mathbb{R}+\mathbb{H}};A)$ is module over $H^{*}(BG;\mathbb{Z})$. Hence $\tilde{H}_{G}^{*}(S^{\mathbb{R}+\mathbb{H}};A)$ is also over $\mathbb{Z}[v]$. By the Thom isomorphism theorem, we have an isomorphism 
$$
H^{*}(BG;A)\cong \tilde{H}_{G}^{*+5}(S^{\mathbb{R}+\mathbb{H}};A)
$$
between two $\mathbb{Z}[v]$-modules. Therefore, $\tilde{H}_{G}^{*}(S^{\mathbb{R}+\mathbb{H}};A)$ is supported in degrees greater or equal $5$ and  the map 
$$
v\cdot -: \tilde{H}_{G}^{m}(S^{\mathbb{R}+\mathbb{H}};A)\rightarrow \tilde{H}_{G}^{m+4}(S^{\mathbb{R}+\mathbb{H}};A)
$$
is an isomorphism for any $m\geq 5$. Moreover, since the $G$-action on $S^{\mathbb{H}}\wedge (\Pin_{+})$ is free away from the base point, the group 
$$\tilde{H}_{G}^{*}(S^{\mathbb{H}}\wedge (\Pin_{+});A)=\tilde{H}^{*}((S^{\mathbb{H}}\wedge (\Pin_{+})/G;A)$$
is supported in degrees less or equal to $5$ (less or equal to $4$ is $G$ is not a finite group). 

To this end, we see that $l^{*}_{1}$ can only be nontrivial in degree $5$. However, suppose $l^{*}_{1}(x)\neq 0$ for some $x\in \tilde{H}_{G}^{5}(S^{\mathbb{H}}\wedge (\Pin_{+});A)$. Then 
$$
0=l^{*}_{1}(0)=l^{*}_{1}(v\cdot x)=v\cdot l^{*}_{1}(x)\neq 0.
$$
This contradiction shows that $l^{*}_{1}=0$ is all degrees. Hence we have $\alpha^{*}_{0}=0$. The proof that $\alpha_{0,*}=0$ is essentially the same.
\end{proof}

\bibliography{references}
\bibliographystyle{plain}

\end{document}